\newtheorem{theorem}{Theorem}[section]
\newtheorem{lemma}[theorem]{Lemma}
\newtheorem{proposition}[theorem]{Proposition}
\newtheorem{corollary}[theorem]{Corollary}
\newtheorem*{theorem*}{Theorem}
\theoremstyle{remark}
\newtheorem{remark}[theorem]{Remark}
\newtheorem{definition}[theorem]{Definition}
\newtheorem{example}[theorem]{Example}
\newcommand{\N}{\mathbb{N}}
\newcommand{\C}{\mathbb{C}}
\newcommand{\vp}{\varphi}
\newcommand{\norm}[1]{\left\lVert #1 \right\rVert}
\newcommand{\wt}[1]{\widetilde{#1}}
\newcommand{\id}{\text{id}}
\newcommand{\tx}{\text}
\newcommand{\cal}[1]{\mathcal{#1}}
\newcommand{\Span}{\text{span}}
\DeclareMathAlphabet\mathbfcal{OMS}{cmsy}{b}{n}
\begin{document}

\title[$k$-AOU spaces and quantum correlations]{Matricial Archimedean order unit spaces and quantum correlations}

\author{Roy Araiza}
\author{Travis Russell}
\author{Mark Tomforde}

\address{Department of Mathematics \& IQUIST, University of Illinois at Urbana-Champaign, Urbana, IL 61801}
\email{raraiza@illinois.edu}
\address{Army Cyber Institute, United States Military Academy, West Point, NY 10996}
\email{travis.russell@westpoint.edu}
\address{Department of Mathematics \\ University of Colorado \\ Colorado Springs, CO 80918 \\USA}
\email{mtomford@uccs.edu}

\thanks{The first author was partially supported as an Andrews Fellow by the Department of Mathematics at Purdue University and as a J.L. Doob Research Assistant Professor in the Department of Mathematics at the University of Illinois at Urbana-Champaign. The third author was supported by a grant from the Simons Foundation (\#527708 to Mark Tomforde)}

\date{\today}

\subjclass[2010]{46L07}

\keywords{$C^*$-algebras, Operator Systems}

\begin{abstract}
We introduce a matricial analogue of an Archimedean order unit space, which we call a $k$-AOU space. We develop the category of $k$-AOU spaces and $k$-positive maps and exhibit functors from this category to the category of operator systems and completely positive maps. We also demonstrate the existence of injective envelopes and C*-envelopes in the category of $k$-AOU spaces. Finally, we show that finite-dimensional quantum correlations can be characterized in terms of states on finite-dimensional $k$-AOU spaces. Combined with previous work, this yields a reformulation of Tsirelson's conjecture in terms of operator systems and $k$-AOU spaces.
\end{abstract}

\maketitle

\section{Introduction}

The origin of the theory of \emph{Archimedean order unit spaces} dates back to the work of Kadison \cite{kadison1951representation} in the mid 20\textsuperscript{th} century. Its development provided the foundations for the study of \emph{operator systems}, which have had a multitude of applications to many areas in operator algebras as well as the rapidly growing field of quantum information theory. Every Archimedean order unit space may be equipped with an operator system structure, and in particular, such a space induces a canonical minimal and maximal operator system structure \cite{PaulsenTodorovTomfordeOpSysStructures}. 

It was shown by Xhabli in \cite{xhabli2012super} that in addition to  these minimal and maximal structures,  one may consider ``intermediate'' structures on an operator system. Xhabli proved that if $\mathcal V$ is an operator system, then for any natural number $k \in \mathbb N$ one may consider the \emph{super k-minimal} operator system $\text{OMIN}_k(\mathcal V)$ and the \emph{super k-maximal} operator system $\text{OMAX}_k(\mathcal V).$  Xhabli showed that the identity map $\id: \mathcal V \to \text{OMIN}_k(\mathcal V)$ (respectively, the identity map $\id: \text{OMAX}_k(\mathcal V) \to \mathcal V$) is a unital $k$-order isomorphism and is completely positive. Thus the operator systems coincide up to the $k$\textsuperscript{th}-level.

Given a natural number $k$, we consider \emph{k-Archimedean order unit spaces} and develop their theory. A $k$-Archimedean order unit space consists of a $*$-vector space $\mathcal V$, a cone $C \subseteq M_k(\mathcal V)$, and a unit $e \in \mathcal V$, such that the triple $(M_k(\mathcal V), C, I_k \otimes e)$ an Archimedean order unit space and the cone is invariant under conjugation by scalar matrices (see Definition~\ref{defn: k-aou space}). We consider operator system structures on a $k$-Archimedean order unit space $\mathcal{V}$ for which the positive elements of $M_k(\mathcal{V})$ coincide with the cone $C$. Given a $k$-Archimedean order unit space $\mathcal{V}$, we define a \emph{$k$-minimal operator system} $\mathcal V_{k\text{-min}}$ and a \emph{k-maximal} operator systems $\mathcal V_{k\text{-max}}$, which satisfy the same universal properties as their super operator system counterparts (see Proposition~\ref{prop: properties of V_k-min} and Proposition~\ref{prop: k-positive implies cp on k-min}). We show that the vital information of these operator systems is contained in the structure at the $k$\textsuperscript{th} matrix level. We then proceed to develop the category whose objects are $k$-Archimedean order unit spaces and whose morphisms are unital $k$-positive maps. In analogy with the pioneering work of Hamana \cite{hamana1979injective}, we prove in Theorem~\ref{thm: existence of injective envelope in k-AOU} that every $k$-Archimedean order unit space has a unique injective envelope in this category. Moreover, we prove in Theorem~\ref{thm: the injective envelope of k-min is also k-min} that the injective envelope of a $k$-Archimedean order unit space $\mathcal{V}$ coincides with the operator system injective envelope of $\mathcal{V}_\text{$k$-min}$. We also show in Corollary~\ref{cor: C^*_e(V) k-minimal when V is k-minimal} that every $k$-Archimedean order unit space has a unique C*-envelope that coincides with the C*-envelope of $\mathcal{V}_\text{$k$-min}$, in the sense that the C*-envelope of $\mathcal{V}_\text{$k$-min}$ is completely order isomorphic to $\mathcal{W}_\text{$k$-min}$ for some $k$-Archimedean order unit space $\mathcal{W}$ containing $\mathcal{V}$.

Our investigation into $k$-Archimedean order unit spaces is motivated by recent progress in the theory of quantum correlations, and as a consequence we are able to provide applications to this theory. Given two natural numbers $m$ and $n$, the \emph{nonsignalling correlations $C_{ns}(n,m)$} with $n$-inputs and $m$-outputs are tuples in $\mathbb R^{m^2 n^2}$ consisting of positive real numbers that induce well-defined \emph{marginal densities}. Such correlations model bipartite systems where two parties receive inputs on which they make measurements to produce various outputs (see Section~\ref{sec: quantum correlations and k-AOU spaces} for details). Of special interest are the convex subsets of \emph{quantum commuting} correlations, denoted $C_{qc}(n,m)$, and the \emph{quantum} correlations, denoted $C_q(n,m)$. While the relation $C_q(n,m) \subseteq C_{qc}(n,m)$ follows from their definitions, the question of whether or not this containment is proper (or, more specifically, whether the closure of $C_q(n,m)$ is equal to $C_{qc}(n,m)$) has generated tremendous activity since it was posed by Tsirelson in the 1980s \cite{cirel1980quantum}. Through the work of various authors, Tsirelson's question was proven to be equivalent to Connes' Embedding Problem, a major problem in von Neumann algebra theory.  (One direction of the equivalence was proven independently by Fritz \cite{fritz2012tsirelson} and by Junge, Navascues, Palazuelos, Perez-Garcia, Scholz, and Werner  \cite{junge2011connes}, and the other direction of the equivalence was established by Ozawa \cite{ozawa2013connes}.)  By the work of Kirchberg in \cite{kirchberg1993non}, Connes' Embedding Problem developed into one of the most famous open problems in the field of operator algebras. It was recently announced that the closure of $C_q(n,m)$ is a proper subset of $C_{qc}(n,m)$ for some values of $m$ and $n$ \cite{ji2020mip}, thus settling the questions of Tsirelson, Kirchberg, and Connes. In their work, \cite{ji2020mip} estimate the values $m$ and $n$ where proper containment occurs to each be approximately $10^{20}$. It remains of interest to produce smaller values as well as determine the exact values of $m$ and $n$ for which we have proper containment.
 
We contribute to this line of research by applying our results to the theory of quantum correlation sets. Quantum and quantum commuting correlations are induced by the action of states on \emph{projection-valued measures}. Introducing the notion of an \emph{abstract projection} in an operator system, the first two authors gave a new characterization of the set of quantum commuting correlations $C_{qc}(n,m)$ using only the data of certain finite-dimensional operator systems \cite[Theorem 6.3]{araiza2020abstract}. This result relies on an equivalence between abstract projections in operator systems and projections in the C*-envelope of an operator system (Theorem \ref{thm: abstract projections in operator systems}). In this paper, we show that the set of quantum correlations $C_q(n,m)$ can be characterized using only the data of finite-dimensional $k$-Archimedean order unit spaces (Theorem~\ref{thm: quantum correlation characterization}). Crucial to this result is the existence of a faithful representation of the C*-envelope of a $k$-Archimedean order unit space as a C*-subalgebra of a direct sum of matrix algebras of size bounded above by $k$ (Theorem~\ref{thm: k-minimal C*algebra is a subalgebra of ell infinity of matrix algebras}). Our results yield a reformulation of Tsirelson's conjecture in terms of operator systems and $k$-Archimedean order unit spaces (Corollary~\ref{cor: operator system Tsirleson}).

This paper is organized in the following way: In Section~\ref{sec: prelims} we present preliminary results and necessary definitions. In Section~\ref{sec: k-AOU spaces} we examine the category of $k$-Archimedean order unit spaces and its properties. In Section~\ref{sec: injectivity} we present our results on the injective envelope of a $k$-Archimedean order unit space, and in Section~\ref{sec: k minimal Cstar algebras} we use these results to construct the C*-envelope in the category of $k$-Archimedean order unit spaces. In Section~\ref{sec: projections in k AOU spaces} we develop the notion of projections in $k$-Archimedean order unit spaces.  Finally, in Section~\ref{sec: quantum correlations and k-AOU spaces} we use our results to characterize finite-dimensional quantum correlations in terms of states on finite-dimensional $k$-AOU spaces.

\section{Preliminaries}\label{sec: prelims}

A \emph{$*$-vector space} is a complex vector space $\mathcal V$ together with an conjugate-linear involution $*: \mathcal{V} \to \mathcal{V}$. An element $x \in \mathcal V$ such that $x^* = x$ is called \emph{hermitian} and we denote the real subspace of all hermitian elements of $\mathcal{V}$ by $\mathcal V_h$.

If $\mathcal{V}$ is a $*$-vector space, a \emph{cone} is a subset $C \subseteq \mathcal{V}_h$ with $\alpha C \subseteq C$ for all $\alpha \in [0, \infty)$ and such that $C + C \subseteq C.$ We will say the cone $C$ is \emph{proper} if $C \cap -C = \{ 0 \}$.

An \emph{ordered $*$-vector space} $(\mathcal{V},C)$ consists of a $*$-vector space $\mathcal{V}$ with a proper cone $C$.  For any ordered vector space $(\mathcal{V},C)$ we may define a partial order on $\mathcal{V}$ by $v \leq w$ (equivalently $w \geq v$) if and only if $w-v \in C$.  With this partial order, $C$ is exactly the set of non-negative elements of $\mathcal{V}$.

If $(\mathcal{V},C)$ is an ordered $*$-vector space, an element $e \in \mathcal{V}$ is called an \emph{order unit} if for all $v \in \mathcal{V}_h$ there exists $r > 0$ such that $re \geq v$.  An order unit $e$ is called \emph{Archimedean} if whenever $re+v \geq 0$ for all real $r >0$, then $v \geq 0$.  An \emph{Archimedean order unit space} (or \emph{AOU space} for short) is a triple $(\mathcal{V},C,e)$ such that $(\mathcal{V},C)$ is an ordered $*$-vector space and $e$ is an Archimedean order unit for $(\mathcal{V},C)$. 

If $e$ is an order unit for $(\mathcal{V},C)$, the \textit{Archimedean closure} of $C$ is defined to be the set of $x \in \mathcal{V}_h$ with the property that $re + x \in C$ for all $r > 0$. In general the Archimedean closure of a proper cone $C$ may not be proper.

If $\mathcal{V}$ is a complex vector space, then for any $n \in \mathbb{N}$ the vector space of $n \times n$ matrices with entries in $\mathcal{V}$ is denoted $M_n(\mathcal{V})$.  We see that $M_n(\mathcal{V})$ inherits a $*$-operation by $(a_{i,j})_{i,j}^* = (a_{j,i}^*)_{i,j}$.

Let $\mathcal{V}$ be a $*$-vector space.  A family of \emph{matrix cones} $\{ C_n \}_{n=1}^\infty$ is a collection such that $C_n$ is a proper cone of $M_n(\mathcal{V})$ for all $n \in \mathbb{N}$.  We call a family of matrix cones $\{ C_n \}_{n=1}^\infty$ a \emph{matrix ordering} if $$\alpha^* C_n \alpha \subseteq C_m$$ for all $\alpha \in M_{n,m} (\mathbb{C})$. We often use a bold calligraphic symbol such as $\mathbfcal{C}$ to denote a matrix ordering; i.e. $\mathbfcal{C} := \{C_n\}_{n=1}^\infty$. When $\mathbfcal{C}$ is a matrix ordering, we let $\mathbfcal{C}_n$ denote the $n\textsuperscript{th}$ matrix cone of the matrix ordering.

If $x \in \mathcal{V}$, for every $n \in \mathbb{N}$ we define $$x_n := I_n \otimes x = \left( \begin{smallmatrix} x & &  \\ & \ddots & \\  & & x \end{smallmatrix} \right) \in M_n(\mathcal{V}).$$  An \emph{operator system} is a triple $(\mathcal{V}, \mathbfcal{C}, e)$ consisting of a $*$-vector space $\mathcal{V}$, a matrix ordering $\mathbfcal{C}$ on $\mathcal{V}$, and an element $e \in \mathcal{V}$ such that $(\mathcal{V}, \mathbfcal{C}_n, e_n)$ is an AOU space for all $n \in \mathbb{N}$. In this case, we call $e$ an \textit{Archimedean matrix order unit}. If we only have that $e$ is an order unit for each $(\mathcal{V}, \mathcal{C}_n)$, then we call $e$ a \textit{matrix order unit}. We often let $\mathcal{V}$ denote the operator system $(\mathcal{V}, \mathbfcal{C}, e)$ when the unit and matrix ordering are unspecified or clear from context.

If $(\mathcal{V},C)$ and $(\mathcal W,D)$ are ordered $*$-vector spaces, a linear map $\phi : \mathcal{V} \to \mathcal{W}$ is called \emph{positive} if $\phi(C) \subseteq D$.  A positive linear map $\phi : \mathcal{V} \to \mathcal{W}$ is an order isomorphism if $\phi$ is a bijection and $\phi(C) = D$. An injective map $\phi: \mathcal{V} \to \mathcal{W}$ is called an \textit{order embedding} if it is an order isomorphism onto its range.

If $\mathcal{V}$ and $\mathcal{W}$ are $*$-vector spaces and $\phi : \mathcal{V} \to \mathcal{W}$ is a linear map, then for each $n \in \mathbb{N}$ the map $\phi$ induces a linear map $\phi_n : M_n(\mathcal{V}) \to M_n(\mathcal{W})$ by $\phi_n ( (a_{i,j})_{i,j}) = ( \phi(a_{i,j}) )_{i,j}$.  If $(\mathcal{V}, \mathbfcal{C}, e)$ and $(\mathcal{W}, \mathbfcal{D}, f)$ are operator systems, a linear map $\phi : \mathcal{V} \to \mathcal{W}$ is called \emph{completely positive} if $\phi_n(\mathbfcal{C}_n) \subseteq \mathbfcal{D}_n$ for all $n \in \mathbb{N}$.  A completely positive $\phi : \mathcal{V} \to \mathcal{W}$ is called \emph{unital} if $\phi(e) = f$.  A unital completely positive map $\phi : \mathcal{V} \to \mathcal{W}$ is called an  \emph{complete order isomorphism} if $\phi$ is a bijection and $\phi (\mathbfcal{C}_n) = \mathbfcal{D}_n$ for all $n \in \mathbb{N}$. A linear map $\phi: \mathcal{V} \to \mathcal{W}$ is called a \emph{complete order embedding} if $\phi$ is a complete order isomorphism onto its range.

The \emph{C*-envelope} of an operator system $\mathcal{V}$ consists of a C*-algebra $C^*_\textnormal{e} (\mathcal{V})$ and a unital complete order embedding $\kappa : \mathcal{V} \to C^*_\textnormal{e} (\mathcal{V})$ such that the C*-algebra generated by $\kappa(\mathcal{V})$ equals $C^*_\textnormal{e}(\mathcal{V})$ (i.e. $C^*(\kappa(\mathcal{V})) = C^*_\textnormal{e} (\mathcal{V})$) and satisfying the following universal property: whenever $j : \mathcal {V} \to \mathcal{A}$ is a unital complete order embedding with $\mathcal{A} = C^*(j(\mathcal{V}))$, then there exists a unique $*$-epimorphism $\pi : \mathcal{A} \to C^*_\textnormal{e}(\mathcal{V})$ such that $\pi \circ j = \kappa$.  Hamana proved that the C*-envelope of an operator system always exists \cite{hamana1979injective}.

\section{$k$-Archimedean order unit spaces} \label{sec: k-AOU spaces}

In this section, we will define the category of $k$-Archimedean order unit spaces, which will be the natural setting for most of our results. We begin by defining the objects of this category.

\begin{definition}[$k$-Archimedean order unit space]\label{defn: k-aou space}
For any $k \in \mathbb{N}$, a \emph{$k$-Archimedean order unit space} (or $k$-AOU space, for short) is a triple $( \mathcal{V}, C, e)$ consisting of
\begin{itemize}
\item[(i)] $\mathcal{V}$, a $*$-vector space, 
\item[(ii)] $C \subseteq M_k(\mathcal{V})_h$, a proper cone, \emph{compatible} in the sense that for each $\alpha \in M_{k} (\mathbb{C})$, we have $\alpha^* C \alpha \subseteq C$, and
\item[(iii)] $e \in \mathcal{V}$ with the property that $e_k := I_k \otimes e$ is an Archimedean order unit for $(M_k(\mathcal{V}), C)$.  
\end{itemize}
A pair $(\mathcal{V}, C)$ satisfying conditions $(i)$ and $(ii)$ is called a \emph{$k$-ordered $*$-vector space}, and an element $e$ satisfying condition $(iii)$ is called a \emph{$k$-Archimedean order unit} for the $k$-ordered vector space $(\mathcal{V},C)$.
\end{definition}

Next, we define the appropriate morphisms in the category of $k$-AOU spaces.

\begin{definition}[$k$-positive maps]
Let $k \in \mathbb{N}$, and suppose $(\mathcal{V},C)$ and $(\mathcal{W},D)$ are $k$-ordered $*$vector spaces. A linear map $\phi: \mathcal{V} \to \mathcal{W}$ is called \emph{$k$-positive} if $\phi_k(C) \subseteq D$. If $\phi$ is $k$-positive and injective with $\phi_k^{-1}(D) \subseteq C$, then $\phi$ is called a \emph{$k$-order embedding}. A bijective $k$-order embedding is called a \emph{$k$-order isomorphism}.
\end{definition}

In the case when $k=1$, it is clear our notion of a $k$-AOU space is identical to that of an AOU space, and that $k$-positive maps, $k$-order embeddings, and $k$-order isomorphisms are just positive maps, order embeddings, and order isomorphisms, respectively.

In \cite{PaulsenTodorovTomfordeOpSysStructures}, a variety of operator system structures were considered for AOU spaces. In the following, we extend that study to the setting of $k$-AOU spaces.

\begin{definition}[Operator System Structure]
Let $k \in \mathbb{N}$, and suppose $(\mathcal{V}, C, e)$ is a $k$-AOU space. If $\mathbfcal{C}$ is an Archimedean closed matrix ordering on $\mathcal{V}$ satisfying $\mathbfcal{C}_k = C$, then we say $\mathbfcal{C}$ \textit{extends} $C$ or is an \emph{extension} of $C$, and we call the operator system $(\mathcal{V}, \mathbfcal{C}, e)$ an \textit{operator system structure} on $(\mathcal{V}, C, e)$.
\end{definition}

We will now give our first example of an operator system structure on a $k$-AOU space. We first provide the definition, and then prove that our definition yields an operator system structure in the subsequent proposition.

\begin{definition}[The $k$-minimal operator system structure on a $k$-AOU space]
Given a $k$-AOU space $(\mathcal{V}, C, e)$, we define
\[ C_n^\text{$k$-min} := \{ x \in M_n(\mathcal{V})_h : \alpha^* x \alpha \in C \text{ for all } \alpha \in M_{n,k} \} \]
for each $n \in \mathbb{N}$. We let $C^\text{$k$-min} := \{ C_n^\text{$k$-min} \}_{n=1}^\infty$ and  $\mathcal{V}_\text{$k$-min} := (\mathcal{V}, C^\text{$k$-min}, e)$.
\end{definition}

For the proposition that follows, We will use the fact that given a matrix ordering $\mathbfcal C$ on a $*$-vector space $\mathcal V$, then if $\mathbfcal C_1$ is proper, then the matrix ordering is proper. We have included the proof for completeness. 

\begin{lemma}[\cite{araiza2021universal}] \label{lem: Proper matrix cone}
Let $\mathbfcal{C}$ be a matrix ordering on a $*$-vector space $\mathcal{V}$, and suppose that $\mathbfcal{C}_1$ is proper. Then $\mathbfcal{C}_n$ is proper for every $n \in \N$.
\end{lemma}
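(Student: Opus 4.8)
The plan is to show directly that $\mathbfcal{C}_n \cap (-\mathbfcal{C}_n) = \{0\}$ for each $n$, using only the properness of $\mathbfcal{C}_1$ together with the compatibility relation $\alpha^* \mathbfcal{C}_n \alpha \subseteq \mathbfcal{C}_m$, specialized to the case $m = 1$ (i.e. to column vectors $\alpha$). Fix $n$ and let $x = (x_{pq}) \in \mathbfcal{C}_n \cap (-\mathbfcal{C}_n)$; the goal is to prove $x = 0$. Since both $x$ and $-x$ lie in $\mathbfcal{C}_n$, for every column vector $\alpha \in M_{n,1}(\mathbb{C})$ compatibility gives $\alpha^* x \alpha \in \mathbfcal{C}_1$ and simultaneously $\alpha^*(-x)\alpha = -(\alpha^* x \alpha) \in \mathbfcal{C}_1$. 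Because $\mathbfcal{C}_1$ is proper, this forces $\alpha^* x \alpha = 0$ for every $\alpha \in M_{n,1}(\mathbb{C})$.

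It then remains to deduce from the vanishing of the $\mathcal{V}$-valued quadratic form $\alpha \mapsto \alpha^* x \alpha = \sum_{r,s} \overline{\alpha_r}\,\alpha_s\, x_{rs}$ that every entry $x_{pq}$ is zero. First I would take $\alpha = e_p$, the $p$-th standard basis vector, to obtain $x_{pp} = 0$ for all $p$, eliminating the diagonal. For the off-diagonal entries I would polarize: taking $\alpha = e_p + e_q$ yields $x_{pq} + x_{qp} = 0$, while taking $\alpha = e_p + i\,e_q$ yields $i\,x_{pq} - i\,x_{qp} = 0$, hence $x_{pq} - x_{qp} = 0$. Adding these two relations gives $2x_{pq} = 0$, so $x_{pq} = 0$. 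Thus all entries of $x$ vanish, $x = 0$, and $\mathbfcal{C}_n$ is proper.

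The only point requiring any care, and the step I expect to be the main (if minor) obstacle, is the off-diagonal computation. Because the entries live in the $*$-vector space $\mathcal{V}$ rather than in $\mathbb{C}$, one cannot simply invoke positive-definiteness of a scalar form, and a single real test vector such as $\alpha = e_p + e_q$ controls only the symmetric combination $x_{pq} + x_{qp}$. One genuinely needs the full two-parameter family $\alpha = a\,e_p + b\,e_q$ over all $a,b \in \mathbb{C}$, equivalently both a real and an imaginary combination, in order to separate $x_{pq}$ from $x_{qp}$ and conclude each vanishes individually. Everything else is routine, and in particular no appeal to the Archimedean property or to the existence of an order unit is needed; the argument uses only the cone structure, the compatibility relation, and properness at the first level.
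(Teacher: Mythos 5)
Your proof is correct and is essentially the paper's own argument: both reduce to $\alpha^* x \alpha \in \mathbfcal{C}_1 \cap (-\mathbfcal{C}_1) = \{0\}$ for column vectors $\alpha$, kill the diagonal with standard basis vectors, and polarize with $e_p + e_q$ and $e_p \pm i e_q$ to separate $x_{pq}$ from $x_{qp}$ (the paper phrases this as extracting $\operatorname{Re}(x_{lk})$ and $\operatorname{Im}(x_{lk})$, but the computation is identical). Your closing remark that no Archimedean or order-unit hypothesis is needed is also accurate.
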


\begin{proof}
Suppose that $x = (x_{ij}) \in \mathbfcal{C}_n \cap -\mathbfcal{C}_n$. Then for each $k= 1, 2, \dots, n$ we have $x_{kk} = e_k^* x e_k \in \mathbfcal{C}_1 \cap -\mathbfcal{C}_1$, where $e_k \in \mathbb{C}^n$ is the $k$\textsuperscript{th} standard unit vector. Hence $x_{kk} = 0$ since $\mathbfcal C_1$ is proper. Let $k,l \in \{1,2, \dots, n\}$ with $k \neq l$. Then $(e_k + e_l)^* x (e_k + e_l) = x_{lk} + x_{kl} = 2\operatorname{Re}(x_{lk}) \in \mathbfcal{C}_1 \cap - \mathbfcal{C}_1$. Hence $\operatorname{Re}(x_{lk}) = 0$. Also $(e_k - ie_l)^* x (e_k - ie_l) = i(x_{lk} - x_{kl}) = 2i\operatorname{Im}(x_{lk}) \in \mathbfcal{C}_1 \cap - \mathbfcal{C}_1$. Hence $\operatorname{Im}(x_{lk}) = 0$. Thus $x_{lk} = 0$, and it follows that $x = 0$ and $\mathbfcal{C}_n$ is proper.
\end{proof}

\begin{proposition}
Let $k \in \mathbb{N}$ and suppose that $(\mathcal{V}, C, e)$ is a $k$-AOU space. Then $C = C_k^\text{$k$-min}$ and $(\mathcal{V}, C^\text{$k$-min}, e)$ is an operator system.
\end{proposition}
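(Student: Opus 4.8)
The plan is to settle the identity $C = C_k^\text{$k$-min}$ at once and then verify the three defining properties of an operator system for $C^\text{$k$-min}$, reducing each to the level-$k$ AOU structure we are handed. For the equality, the inclusion $C \subseteq C_k^\text{$k$-min}$ is precisely the compatibility hypothesis $\alpha^* C \alpha \subseteq C$ applied with $\alpha \in M_k(\mathbb{C}) = M_{k,k}$, while the reverse inclusion follows by taking $\alpha = I_k$ in the definition of $C_k^\text{$k$-min}$, which forces $x = I_k^* x I_k \in C$. That $C^\text{$k$-min}$ is a matrix ordering is routine: each $C_n^\text{$k$-min}$ is visibly a cone, being defined by a condition preserved under sums and nonnegative scalings since $C$ is a cone; and given $\beta \in M_{n,m}$ and $x \in C_n^\text{$k$-min}$, for every $\gamma \in M_{m,k}$ we have $\gamma^*(\beta^* x \beta)\gamma = (\beta\gamma)^* x (\beta\gamma) \in C$ because $\beta\gamma \in M_{n,k}$, so $\beta^* x \beta \in C_m^\text{$k$-min}$. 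For properness I will check $C_1^\text{$k$-min}$ only: if $x \in C_1^\text{$k$-min} \cap -C_1^\text{$k$-min}$ then $\alpha^* x \alpha \in C \cap -C = \{0\}$ for all $\alpha \in M_{1,k}$, and taking $\alpha$ to be a standard unit row vector gives $x = 0$; Lemma~\ref{lem: Proper matrix cone} then upgrades this to properness of every $C_n^\text{$k$-min}$.

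It remains to show $e_n = I_n \otimes e$ is an Archimedean order unit for $(M_n(\mathcal{V}), C_n^\text{$k$-min})$, and here the computation $\alpha^* e_n \alpha = (\alpha^*\alpha) \otimes e$ for $\alpha \in M_{n,k}$ is the organizing identity. I expect the \emph{order unit} property to be the main obstacle, because the coefficient $(\alpha^*\alpha)\otimes e$ degenerates as $\alpha$ varies and one must produce a single $r$ that works for all $\alpha$ simultaneously. My plan is to reduce it to the uniform bound $\sup\{\norm{\alpha^* x \alpha}_{or} : \alpha \in M_{n,k},\ \norm{\alpha} \le 1\} < \infty$, where $\norm{\cdot}_{or}$ is the order-unit norm of the AOU space $(M_k(\mathcal{V}), C, e_k)$. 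Writing $\alpha = U|\alpha|$ via the singular value decomposition with $U \in M_{n,k}$ a contraction, the element $\alpha^*(re_n - x)\alpha$ is a compression by scalar matrices of $re_k - U^* x U$, so by compatibility it suffices to have $re_k - U^* x U \in C$ for every contraction $U \in M_{n,k}$; this is exactly the uniform bound applied to the hermitian elements $U^* x U$.

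To establish the bound I would expand $U^* x U = \sum_{p,q} (U^* E_{pq} U) \otimes x_{pq}$ and split each term into hermitian summands, writing $x_{pq} = a_{pq} + i b_{pq}$ with $a_{pq}, b_{pq} \in \mathcal{V}_h$ and each coefficient $U^* E_{pq} U$ into its hermitian and skew parts (all of operator norm at most $\norm{U}^2 \le 1$). The key estimate is $\norm{h \otimes v}_{or} \le \norm{I_k \otimes v}_{or}$ for a hermitian scalar contraction $h$ and $v \in \mathcal{V}_h$, which I would prove by diagonalizing $h = w^* d w$ and expressing $\norm{I_k \otimes v}_{or}\, e_k - d\otimes v$ as a nonnegative combination of the compatible cone elements $\norm{I_k \otimes v}_{or}\, e_k \pm I_k \otimes v$, then conjugating by the unitary $w$. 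Since only finitely many entries $x_{pq}$ occur and each $\norm{I_k \otimes v}_{or}$ is finite (as $e_k$ is an order unit at level $k$), summing gives a finite bound $R$ independent of $U$. Choosing $r > R$ then yields $re_k \pm U^* x U \in C$ for all contractions $U$, hence $re_n \pm x \in C_n^\text{$k$-min}$, and $e_n$ is an order unit.

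Finally, the Archimedean property falls out cleanly from the level-$k$ hypothesis. Suppose $re_n + x \in C_n^\text{$k$-min}$ for all $r > 0$; then for each fixed $\alpha$ we have $r(\alpha^*\alpha)\otimes e + \alpha^* x \alpha \in C$. Combining this with the inequality $(\alpha^*\alpha)\otimes e \le \norm{\alpha}^2 e_k$—which itself follows from compatibility by writing $\norm{\alpha}^2 I_k - \alpha^*\alpha = \delta^*\delta$ and noting $\delta^* e_k \delta \in C$—and scaling by $r$ gives $r\norm{\alpha}^2 e_k + \alpha^* x \alpha \in C$ for all $r > 0$. For $\alpha \neq 0$ the quantity $r' := r\norm{\alpha}^2$ ranges over all of $(0,\infty)$, so $r' e_k + \alpha^* x \alpha \in C$ for every $r' > 0$, whence $\alpha^* x \alpha \in C$ by the Archimedean property of $e_k$ for $(M_k(\mathcal{V}), C)$; the case $\alpha = 0$ is trivial. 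Therefore $x \in C_n^\text{$k$-min}$, completing the verification that $(\mathcal{V}, C^\text{$k$-min}, e)$ is an operator system.
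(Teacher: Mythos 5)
Your proof is correct, and four of its five steps (the equality $C = C_k^\text{$k$-min}$, the matrix-ordering verification, properness via Lemma~\ref{lem: Proper matrix cone}, and the Archimedean property via rescaling $t$ by $\norm{\alpha}^2$ and the inequality $(\alpha^*\alpha)\otimes e \leq \norm{\alpha}^2 e_k$) coincide with the paper's argument essentially line for line. The one genuine divergence is the order unit step. The paper only verifies that $e$ is an order unit at the first matrix level and then invokes the general fact (citing \cite[Proposition 2.4]{araiza2020abstract}) that an order unit for the first cone of a matrix ordering is automatically a matrix order unit. You instead prove the order unit property directly at every level $n$: you factor $\alpha = U\lvert\alpha\rvert$ so that $\alpha^*(re_n - x)\alpha = \lvert\alpha\rvert(re_k - U^*xU)\lvert\alpha\rvert$, reduce to a uniform order-norm bound on $U^*xU$ over contractions $U \in M_{n,k}$, and obtain that bound by an entrywise estimate resting on $\norm{h \otimes v}_{or} \leq \norm{I_k \otimes v}_{or}$ for hermitian scalar contractions $h$ (proved by diagonalizing $h$ and writing $Ne_k - d\otimes v$ as a convex-type combination of the compatible elements $E_{jj}\otimes(Ne \mp v)$). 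This is sound --- the identity $\lvert\alpha\rvert U^*U\lvert\alpha\rvert = \alpha^*\alpha$ makes the reduction to contractions legitimate, and only finitely many entries $x_{pq}$ occur --- and it buys self-containedness: you are in effect re-proving the cited proposition in the case at hand. The cost is length; the paper's reduction to level $1$ is a one-line verification once the external fact is granted. Either route is acceptable, and correctly identifying that the order unit property is the only step requiring real work is the right instinct.
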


\begin{proof}
It is clear that $C = C_k^\text{$k$-min}$ since $\alpha^* C \alpha \subseteq C$ for every $\alpha \in M_k$. It remains to show $(\mathcal{V}, C^\text{$k$-min}, e)$ is an operator system. We first verify $C^\text{$k$-min}$ is a matrix ordering. Suppose $x \in C_n^\text{$k$-min}$ and $\beta \in M_{n,m}$. Then for every $\alpha \in M_{m,k}$ we have that \[ \alpha^* \beta^* x \beta \alpha = (\beta \alpha)^* x (\beta \alpha) \in C. \] Thus $\beta^* x \beta \in C_m^\text{$k$-min}$. Next suppose $t > 0$ and $x,y \in C_n^\text{$k$-min}$. Then for all $\alpha \in M_{n,k}$ we have $\alpha^*(x + ty) \alpha = \alpha^* x \alpha + t \alpha^* y \alpha \in C$. Thus, $C_n^\text{$k$-min}$ is a cone for each $n$. Therefore $C^\text{$k$-min}$ is a matrix ordering. The fact that $C^\text{$k$-min}$ is proper is immediate from Lemma~\ref{lem: Proper matrix cone} and since the initial cone $C$ is proper. 

Next, we show $e$ is an Archimedean matrix order unit. To see that $e$ is a matrix order unit, it suffices to check that $e$ is an order unit. For a proof of this fact see e.g. \cite[Proposition 2.4]{araiza2020abstract}. Let $x \in \mathcal{V}_h$. Then there exists $t > 0$ such that $I_k \otimes x + t I_k \otimes e \in C$. Hence, for each $\alpha \in M_{1,k}$ \[ \alpha^*(x + te)\alpha = (\alpha^* \alpha) \otimes x + t (\alpha^* \alpha) \otimes e = \alpha^* (I_k \otimes x + t I_k \otimes e) \alpha \in C. \]
It follows that $x + te \in C_{1}^\text{$k$-min}$. So $e$ is an order unit and thus a matrix order unit. Finally, we verify the Archimedean property. Suppose $x + tI_n \otimes e \in C_n^\text{$k$-min}$ for every $t > 0$. Then for every $\alpha \in M_{n,k}$ with $\alpha \neq 0$ and every $t > 0$,
\[ \alpha^* x \alpha + \frac{t}{\|\alpha\|^2} \alpha^*(I_n \otimes e) \alpha \in C. \]
But \[ \alpha^* x \alpha + \frac{t}{\|\alpha\|^2} \alpha^*(I_n \otimes e) \alpha \leq \alpha^* x \alpha + t I_k \otimes e \] and hence $\alpha^* x \alpha + t I_k \otimes e \in C$ for every $t > 0$. Since $I_k \otimes e$ is Archimedean for $C$, $\alpha^* x \alpha \in C$. We conclude $x \in C_n^\text{$k$-min}$ and therefore $e$ is Archimedean.
\end{proof}

The next proposition gives the basic properties of the operator system structure $\mathcal{V}_\text{$k$-min}$.

\begin{proposition} \label{prop: properties of V_k-min}
Let $k \in \mathbb{N}$, and suppose that $(\mathcal{V}, C, e)$ is a $k$-AOU space. If $\mathbfcal{C}$ is an extension of $C$, then $\mathbfcal{C}_n \subseteq C_n^\text{$k$-min}$ for every $n \in \mathbb{N}$. Moreover, $\mathbfcal{C}_n = C_n^\text{$k$-min}$ for every $n \in \{1,2,\dots,k\}$. Hence, the identity map is a unital completely positive $k$-order embedding from $(\mathcal{V}, \mathbfcal{C}, e)$ to $\mathcal{V}_\text{$k$-min}$.
\end{proposition}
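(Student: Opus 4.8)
The plan is to establish the two inclusions separately and then read off the statement about the identity map. The containment $\mathbfcal{C}_n \subseteq C_n^\text{$k$-min}$ is immediate from the matrix ordering axioms: if $x \in \mathbfcal{C}_n$ and $\alpha \in M_{n,k}$, then $\alpha^* x \alpha \in \mathbfcal{C}_k = C$, since $\mathbfcal{C}$ is a matrix ordering (so $\alpha^* \mathbfcal{C}_n \alpha \subseteq \mathbfcal{C}_k$) and $\mathbfcal{C}$ extends $C$. As this holds for every $\alpha \in M_{n,k}$, the definition gives $x \in C_n^\text{$k$-min}$. This proves the first assertion for all $n$, and in particular confirms $\mathbfcal{C}_k = C = C_k^\text{$k$-min}$, recovering the previous proposition at the $k$-th level.

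For the reverse inclusion when $n \le k$, I would use an isometric inflation. Let $V = \left( \begin{smallmatrix} I_n \\ 0 \end{smallmatrix} \right) \in M_{k,n}$, so that $V^* V = I_n$. Given $x \in C_n^\text{$k$-min}$, I first check that $V x V^* \in C$: for any $\gamma \in M_k$ we have $\gamma^*(V x V^*)\gamma = (V^*\gamma)^* x (V^*\gamma)$ with $V^*\gamma \in M_{n,k}$, so the defining property of $C_n^\text{$k$-min}$ yields $(V^*\gamma)^* x (V^*\gamma) \in C$; since $C = C_k^\text{$k$-min}$, this shows $V x V^* \in C = \mathbfcal{C}_k$. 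Now I compress back: because $V \in M_{k,n}$ and $\mathbfcal{C}$ is a matrix ordering, $V^* \mathbfcal{C}_k V \subseteq \mathbfcal{C}_n$, while $V^*(V x V^*)V = (V^*V)\,x\,(V^*V) = x$. Hence $x \in \mathbfcal{C}_n$, giving the equality $\mathbfcal{C}_n = C_n^\text{$k$-min}$ for $n \in \{1,\dots,k\}$.

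Finally, the identity map $\id : (\mathcal{V}, \mathbfcal{C}, e) \to \mathcal{V}_\text{$k$-min}$ is unital by inspection, and completely positive because $\id_n(\mathbfcal{C}_n) = \mathbfcal{C}_n \subseteq C_n^\text{$k$-min}$ for all $n$ by the first inclusion. It is a $k$-order embedding since it is injective, $k$-positive, and satisfies $\id_k^{-1}(C_k^\text{$k$-min}) = C_k^\text{$k$-min} = C = \mathbfcal{C}_k$, so the required condition $\id_k^{-1}(C_k^\text{$k$-min}) \subseteq \mathbfcal{C}_k$ holds (in fact with equality).

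The only step carrying any content is the reverse inclusion for $n \le k$, where the main thing to get right is the bookkeeping of the inflation and compression by $V$: one must verify both that inflating $x$ lands inside $C = \mathbfcal{C}_k$ and that compressing recovers $x$ exactly, both of which hinge on the relation $V^*V = I_n$. The first and third parts are formal consequences of the matrix ordering axioms and the definitions.
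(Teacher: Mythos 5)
Your proof is correct and follows essentially the same route as the paper: the forward inclusion from the matrix ordering axioms, and the reverse inclusion for $n \le k$ by inflating $x$ to $x \oplus 0_{k-n} = VxV^*$ (which lies in $C$ by the defining property of $C_n^\text{$k$-min}$ applied to $\alpha = V^* \in M_{n,k}$ --- your detour through $C_k^\text{$k$-min}$ is valid but unnecessary) and compressing back via $V^*(\cdot)V$. The final claims about the identity map are handled exactly as the definitions dictate.
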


\begin{proof}
Let $n \in \mathbb N$ and let $x \in \mathbfcal{C}_n$. Since $\mathbfcal{C}$ is a matrix ordering on $\mathcal{V}$, $\alpha^* x \alpha \in \mathbfcal{C}_k = C$ for every $\alpha \in M_{n,k}$. Thus $\mathbfcal{C}_n \subseteq C_n^\text{$k$-min}$.

Now suppose $n < k$ and let $x \in C_n^\text{$k$-min}$. Then $x \oplus 0_{k-n} \in C = \mathbfcal{C}_k$. Since $\mathbfcal{C}$ is a matrix ordering, $x \in \mathbfcal{C}_n$, since \[ x = \begin{bmatrix} I_{n} \\ 0_{n,k-n} \end{bmatrix}^* (x \oplus 0_{k-n}) \begin{bmatrix} I_{n} \\ 0_{n,k-n} \end{bmatrix}. \]
The statement follows. \end{proof}

Later, we will make use of the relations between $k$-positive maps on $k$-AOU spaces and completely positive maps on operator systems of the form $\mathcal{V}_\text{$k$-min}$. The next two results summarize these relationships.

\begin{proposition} \label{prop: k-positive implies cp on k-min}
Suppose $\varphi: \mathcal{V} \to \mathcal{W}$ is a $k$-positive map between $k$-AOU spaces $(\mathcal{V},C,e)$ and $(\mathcal{W},D,e)$. Then $\varphi: \mathcal{V} \to \mathcal{W}_\text{$k$-min}$ is completely positive with respect to any operator system structure on $\mathcal{V}$.
\end{proposition}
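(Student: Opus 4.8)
The plan is to unwind the definition of complete positivity for the target $\mathcal{W}_\text{$k$-min}$ and reduce everything to the defining property $\varphi_k(C) \subseteq D$ of a $k$-positive map. I would fix an arbitrary operator system structure $\mathbfcal{C}$ on $(\mathcal{V}, C, e)$, so that $\mathbfcal{C}$ is a matrix ordering with $\mathbfcal{C}_k = C$. To prove that $\varphi : (\mathcal{V}, \mathbfcal{C}, e) \to \mathcal{W}_\text{$k$-min}$ is completely positive, I must show $\varphi_n(\mathbfcal{C}_n) \subseteq D_n^\text{$k$-min}$ for every $n \in \mathbb{N}$; by the definition of $D_n^\text{$k$-min}$ this amounts to checking, for each $x \in \mathbfcal{C}_n$, that $\varphi_n(x)$ is hermitian and that $\beta^* \varphi_n(x) \beta \in D$ for every $\beta \in M_{n,k}$.

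The computational heart is the identity
\[ \beta^* \varphi_n(x) \beta = \varphi_k(\beta^* x \beta), \qquad \beta \in M_{n,k}(\mathbb{C}), \]
which holds because conjugation by a scalar matrix commutes with the entrywise application of the linear map $\varphi$ (a direct entry-by-entry check, using linearity to pull the scalars $\overline{\beta_{ip}}$ and $\beta_{jq}$ through $\varphi$). Granting this, the argument is immediate: since $\mathbfcal{C}$ is a matrix ordering and $x \in \mathbfcal{C}_n$, we have $\beta^* x \beta \in \mathbfcal{C}_k = C$; applying $k$-positivity gives $\varphi_k(\beta^* x \beta) \in D$; hence $\beta^* \varphi_n(x)\beta \in D$ for all $\beta \in M_{n,k}$, which is precisely the membership $\varphi_n(x) \in D_n^\text{$k$-min}$.

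It remains to dispatch the one preliminary point, namely that $\varphi_n(x)$ lands in the hermitian part $M_n(\mathcal{W})_h$. For this I would first observe that $\varphi$ is self-adjoint: because $e_k$ is an order unit for $C$, every hermitian element of $M_k(\mathcal{V})$ is a difference of two elements of $C$, so $\varphi_k$ maps $M_k(\mathcal{V})_h$ into $D - D \subseteq M_k(\mathcal{W})_h$; specializing to $I_k \otimes v$ for $v \in \mathcal{V}_h$ shows $\varphi(v) \in \mathcal{W}_h$, and decomposing a general $v$ into its hermitian real and imaginary parts yields $\varphi(v^*) = \varphi(v)^*$. Consequently $\varphi_n$ preserves hermiticity, so $\varphi_n(x)$ is hermitian whenever $x$ is.

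Finally, since $n$ was arbitrary this establishes complete positivity, and since $\mathbfcal{C}$ was an arbitrary extension of $C$ the conclusion holds for every operator system structure on $\mathcal{V}$; indeed the only feature of $\mathbfcal{C}$ used is that it is a matrix ordering with $\mathbfcal{C}_k = C$. (Alternatively one could reduce to the single largest structure $C^\text{$k$-min}$ via Proposition~\ref{prop: properties of V_k-min}, since any extension satisfies $\mathbfcal{C}_n \subseteq C_n^\text{$k$-min}$.) I do not anticipate a genuine obstacle: the proof is essentially the scalar-conjugation identity combined with the definition of the $k$-min cone, and the only subtlety worth stating carefully is the self-adjointness of $\varphi$, which is needed merely so that $\varphi_n(x)$ is an admissible candidate for membership in $D_n^\text{$k$-min}$.
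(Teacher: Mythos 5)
Your proof is correct and follows essentially the same route as the paper's: conjugate by $\beta \in M_{n,k}$, use that $\mathbfcal{C}$ is a matrix ordering with $\mathbfcal{C}_k = C$, apply $k$-positivity, and invoke the identity $\varphi_k(\beta^* x \beta) = \beta^* \varphi_n(x)\beta$. The only addition is your explicit verification that $\varphi_n(x)$ is hermitian, a point the paper leaves implicit.
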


\begin{proof}
Let $\mathbfcal{C}$ be an extension of $C$ and suppose $x \in \mathbfcal{C}_n$. Then $\alpha^* x \alpha \in C$ for every $\alpha \in M_{n,k}$. Since $\varphi$ is $k$-positive, $\varphi_k(\alpha^* x \alpha) \in D$ for every $\alpha \in M_{n,k}$. However, $\varphi_k(\alpha^* x \alpha) = \alpha^* \varphi_n(x) \alpha$. It follows $\varphi_n(x) \in D_n^\text{$k$-min}$, and thus $\varphi$ is completely positive. \qedhere
\end{proof}

\begin{corollary} \label{cor: k-min is injective}
Let $\mathcal V$ and $\mathcal W$ be k-AOU spaces and suppose $i: \mathcal{V} \to \mathcal{W}$ is a $k$-order embedding. Then $i: \mathcal{V}_\text{$k$-min} \to \mathcal{W}_\text{$k$-min}$ is a complete order embedding.
\end{corollary}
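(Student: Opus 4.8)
The plan is to unpack what ``complete order embedding'' requires and check it one matrix level at a time. Since $i$ is injective by hypothesis, it suffices to show that for every $n \in \mathbb{N}$ and every $x \in M_n(\mathcal{V})_h$ we have $x \in C_n^\text{$k$-min}$ if and only if $i_n(x) \in D_n^\text{$k$-min}$; equivalently, that $i_n\big(C_n^\text{$k$-min}\big) = D_n^\text{$k$-min} \cap i_n\big(M_n(\mathcal{V})\big)$. The forward inclusion is essentially for free: a $k$-order embedding is in particular $k$-positive, so applying Proposition~\ref{prop: k-positive implies cp on k-min} with the $k$-minimal structure $C^\text{$k$-min}$ chosen as the extension of $C$ on $\mathcal{V}$ shows that $i \colon \mathcal{V}_\text{$k$-min} \to \mathcal{W}_\text{$k$-min}$ is completely positive, i.e. $i_n\big(C_n^\text{$k$-min}\big) \subseteq D_n^\text{$k$-min}$ for all $n$.

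The content is in the reverse direction. Fix $n$ and suppose $x \in M_n(\mathcal{V})_h$ with $i_n(x) \in D_n^\text{$k$-min}$. By the very definition of $D_n^\text{$k$-min}$, this says $\alpha^* i_n(x) \alpha \in D$ for every $\alpha \in M_{n,k}$. The step I would isolate as the key identity is that amplification commutes with compression by scalar matrices: using only linearity of $i$, a direct entrywise computation gives $\alpha^* i_n(x) \alpha = i_k(\alpha^* x \alpha)$ for every $\alpha \in M_{n,k}$. Hence $i_k(\alpha^* x \alpha) \in D$ for all such $\alpha$. Now I invoke the defining property of a $k$-order embedding, $i_k^{-1}(D) \subseteq C$, which lets me pass from the image back to the domain \emph{at level $k$}: it yields $\alpha^* x \alpha \in C$ for every $\alpha \in M_{n,k}$, and this is precisely the statement that $x \in C_n^\text{$k$-min}$.

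Combining the two directions gives $i_n\big(C_n^\text{$k$-min}\big) = D_n^\text{$k$-min} \cap i_n\big(M_n(\mathcal{V})\big)$ for all $n$, so $i$ is a complete order embedding. I do not expect any serious obstacle here: the only substantive move is the elementary identity $\alpha^* i_n(x) \alpha = i_k(\alpha^* x \alpha)$, whose role is to reduce the level-$n$ positivity test entirely to level $k$, where the hypothesis $i_k^{-1}(D) \subseteq C$ is exactly what is needed. The remaining points are bookkeeping, the mildest being that the cones live in the hermitian parts, which is automatic since $i_n(x) \in D_n^\text{$k$-min}$ already presupposes working with hermitian elements.
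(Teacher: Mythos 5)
Your proof is correct and follows essentially the same route as the paper: the forward inclusion is Proposition~\ref{prop: k-positive implies cp on k-min} in both cases, and your reverse inclusion is exactly the content of applying that proposition to $i^{-1}\colon i(\mathcal{V})\to\mathcal{V}$ (which is the paper's phrasing), only you inline the computation via the identity $\alpha^* i_n(x)\alpha = i_k(\alpha^* x\alpha)$ instead of citing the proposition a second time. If anything, your explicit version sidesteps the small bookkeeping point the paper leaves implicit, namely that the subsystem $i(\mathcal{V}_\text{$k$-min})\subseteq\mathcal{W}_\text{$k$-min}$ is an operator system structure on the $k$-AOU space $i(\mathcal{V})$ with $k$-th cone $D\cap M_k(i(\mathcal{V}))$.
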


\begin{proof}
By Proposition~\ref{prop: k-positive implies cp on k-min}, $i: \mathcal{V}_\text{$k$-min} \to \mathcal{W}_\text{$k$-min}$ is completely positive. Since $i^{-1}: i(\mathcal{V}) \to \mathcal{V}$ is $k$-positive, and since $i(\mathcal{V}_\text{$k$-min})$ induces an operator system structure on $\mathcal{V}$, $i^{-1}: i(\mathcal{V}_\text{$k$-min}) \to \mathcal{V}_\text{$k$-min}$ is completely positive by Proposition~\ref{prop: k-positive implies cp on k-min}. Thus $i$ is a complete order embedding.
\end{proof}

Our notation and terminology for $\mathcal{V}_\text{$k$-min}$ is closely related to concepts introduced by Xhabli in \cite{xhabli2012super}. We will recall some definitions and results due to Xhabli and explain the connection between Xhabli's concepts and ours in Remark~\ref{Xhabli-relationship-rem}.

\begin{definition}[$\text{OMIN}_k(\mathcal{V})$]
Let $\mathcal{V}$ be an operator system. For each $n \in \mathbb{N}$, let
\[ C_n^\text{$k$-min}(\mathcal{V}) := \{ x \in M_n(\mathcal{V}) : \phi_n(x) \geq 0 \text{ for every ucp } \phi: \mathcal{V} \to M_k(\mathbb C). \} \]
We let $\text{OMIN}_k(\mathcal{V})$ denote the triple $(\mathcal{V}, \{ C_n^\text{$k$-min}(\mathcal{V}) \}_{n=1}^\infty, e)$.
\end{definition}

Xhabli proved the following theorem detailing the basic properties of $\text{OMIN}_k(\mathcal{V})$.

\begin{theorem}(\cite[Theorem 3.7]{xhabli2012super}) \label{thm: Xhabli's theorem}
Let $\mathcal{V}$ be an operator system, and let $k \in \mathbb{N}$. Then $\text{OMIN}_k(\mathcal{V})$ is an operator system. Moreover, the identity map $id: \mathcal{V} \to \text{OMIN}_k(\mathcal{V})$ is a unital $k$-order embedding. In particular, $\id: \mathcal V \to \text{OMIN}_k(\mathcal V)$ is completely positive.
\end{theorem}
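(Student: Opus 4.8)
The plan is to verify, in turn, that $\{C_n^\text{$k$-min}(\mathcal{V})\}_{n=1}^\infty$ is a matrix ordering with $e$ an Archimedean matrix order unit (so that $\text{OMIN}_k(\mathcal{V})$ is an operator system), that $\id$ is unital and completely positive, and finally that $\id$ is a $k$-order embedding, i.e. that $\mathbfcal{C}_k = C_k^\text{$k$-min}(\mathcal{V})$. Everything but this last equality is formal; the genuine content is the reverse inclusion $C_k^\text{$k$-min}(\mathcal{V}) \subseteq \mathbfcal{C}_k$.

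For the structural axioms I would first record that for every ucp $\phi : \mathcal{V} \to M_k(\mathbb{C})$, every $\beta \in M_{n,m}(\mathbb{C})$, and every $x \in M_n(\mathcal{V})$ one has $\phi_m(\beta^* x \beta) = (\beta \otimes I_k)^* \phi_n(x) (\beta \otimes I_k)$. Together with linearity of $\phi_n$, this shows at once that each $C_n^\text{$k$-min}(\mathcal{V})$ is a cone and that $\beta^* C_n^\text{$k$-min}(\mathcal{V}) \beta \subseteq C_m^\text{$k$-min}(\mathcal{V})$, so the family is compatible. Since every ucp map is completely positive, $\phi_n(\mathbfcal{C}_n) \subseteq M_{nk}(\mathbb{C})^+$, giving $\mathbfcal{C}_n \subseteq C_n^\text{$k$-min}(\mathcal{V})$ for all $n$; this single inclusion shows simultaneously that $\id : \mathcal{V} \to \text{OMIN}_k(\mathcal{V})$ is completely positive (unitality being trivial) and that $e$ remains an order unit at each level, as a witness $r e_n + x \in \mathbfcal{C}_n$ for $x \in M_n(\mathcal{V})_h$ automatically lies in $C_n^\text{$k$-min}(\mathcal{V})$. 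For the Archimedean property, if $r e_n + x \in C_n^\text{$k$-min}(\mathcal{V})$ for all $r > 0$, then applying any ucp $\phi$ and using $\phi_n(e_n) = I_{nk}$ gives $\phi_n(x) + r I_{nk} \geq 0$ for all $r > 0$, hence $\phi_n(x) \geq 0$; as $\phi$ is arbitrary, $x \in C_n^\text{$k$-min}(\mathcal{V})$. Properness at level one follows because $x, -x \in C_1^\text{$k$-min}(\mathcal{V})$ forces $\phi(x) = 0$ for every ucp $\phi$, and taking $\phi = (\lambda \mapsto \lambda I_k) \circ s$ for each state $s$ of $\mathcal{V}$ yields $s(x) = 0$; since states separate the points of an operator system, $x = 0$. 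Properness at all levels then follows from Lemma~\ref{lem: Proper matrix cone}.

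The heart of the matter is $C_k^\text{$k$-min}(\mathcal{V}) \subseteq \mathbfcal{C}_k$. I would fix a unital complete order embedding $\mathcal{V} \subseteq B(H)$ (Choi--Effros), so that $\mathbfcal{C}_k = M_k(\mathcal{V}) \cap B(\mathbb{C}^k \otimes H)^+$, and it then suffices, for $x \in C_k^\text{$k$-min}(\mathcal{V})$, to show $\langle x \xi, \xi \rangle \geq 0$ for a dense set of $\xi \in \mathbb{C}^k \otimes H$ (we may take $H$ infinite-dimensional). Write $\xi = \sum_{i=1}^k e_i \otimes h_i$ with $\{h_i\}$ linearly independent (such $\xi$ are dense), let $V : \mathbb{C}^k \to H$ be $V e_i = h_i$, and set $P = V^* V$, the invertible Gram matrix. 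Then $\tilde{V} := V P^{-1/2}$ is an isometry, so $\tilde\phi(v) := \tilde{V}^* v \tilde{V}$ defines a ucp map $\mathcal{V} \to M_k(\mathbb{C})$. Writing $x = \sum_{p,q} E_{pq} \otimes x_{pq}$, a direct block computation gives $\langle x \xi, \xi \rangle = \sum_{p,q} [V^* x_{pq} V]_{pq} = \langle \tilde\phi_k(x)\, \zeta, \zeta \rangle$, where $\zeta = (I_k \otimes P^{1/2}) \hat{e}$ and $\hat{e} = \sum_i e_i \otimes e_i$. Since $x \in C_k^\text{$k$-min}(\mathcal{V})$ and $\tilde\phi$ is ucp, $\tilde\phi_k(x) \geq 0$, so $\langle x \xi, \xi \rangle \geq 0$. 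Continuity in $\xi$ then yields $\langle x \xi, \xi \rangle \geq 0$ for all $\xi$, whence $x \geq 0$ in $B(\mathbb{C}^k \otimes H)$ and $x \in \mathbfcal{C}_k$. Combined with the easy inclusion this proves $\mathbfcal{C}_k = C_k^\text{$k$-min}(\mathcal{V})$, so $\id$ reflects positivity at level $k$ and is a $k$-order embedding.

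The main obstacle is precisely the unitality constraint in this last step: a vector $\xi$ produces only a completely positive compression $v \mapsto V^* v V$, which is unital only when $V$ is an isometry. The device that resolves this is the Gram-matrix renormalization $\tilde{V} = V P^{-1/2}$, which converts the compression into a genuine ucp map while transferring the defect $P^{1/2}$ onto the test vector $\zeta$; one must then verify the block identity relating $\langle x \xi, \xi \rangle$ to $\langle \tilde\phi_k(x) \zeta, \zeta \rangle$ and carry out the density/continuity reduction to invertible Gram matrices. Everything else reduces to complete positivity and unitality of the test maps, together with the standard facts that states recover the order and separate the points of an operator system.
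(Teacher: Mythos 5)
This theorem is quoted in the paper from Xhabli's thesis (\cite[Theorem 3.7]{xhabli2012super}) and no proof is given there, so there is no in-paper argument to compare yours against; what you have written is a correct, self-contained proof, and its key step is essentially the standard (Xhabli-style) one. The formal parts check out: the identity $\phi_m(\beta^*x\beta)=(\beta\otimes I_k)^*\phi_n(x)(\beta\otimes I_k)$ gives compatibility, the inclusion $\mathbfcal{C}_n\subseteq C_n^\text{$k$-min}(\mathcal{V})$ gives complete positivity of $\id$ and the order-unit property, and the Archimedean and properness verifications are right. The genuine content is $C_k^\text{$k$-min}(\mathcal{V})\subseteq \mathbfcal{C}_k$, and your Gram-matrix renormalization $\widetilde V=VP^{-1/2}$ correctly converts the merely completely positive compression $v\mapsto V^*vV$ into a ucp map into $M_k$ while transferring $P^{1/2}$ onto the test vector; I verified the block identity $\langle x\xi,\xi\rangle=\sum_{p,q}[V^*x_{pq}V]_{pq}=\langle\widetilde\phi_k(x)\zeta,\zeta\rangle$ and the density/continuity reduction (after replacing $H$ by $H\otimes\ell^2$ if needed) is legitimate. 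One cosmetic point: the printed definition of $C_n^\text{$k$-min}(\mathcal{V})$ does not restrict to hermitian elements, so to have a cone in $M_n(\mathcal{V})_h$ one should also observe that $\phi_n(x)\geq 0$ for all ucp $\phi$ forces $x=x^*$, which follows from the same point-separation-by-states argument you already invoke for properness.
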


We remark that Xhabli's $\text{OMIN}_k(\mathcal{V})$ is constructed from an operator system $\mathcal{V}$, whereas our $\mathcal{V}_\text{$k$-min}$ requires only a $k$-AOU space as its input. Nonetheless, we show below that the two notions are equivalent.

\begin{definition}[The $k$-state space]
Given a $k$-AOU space $(\mathcal{V}, C, e)$ we define 
$$\mathcal{S}_k (\mathcal{V}) := \{ \phi : \mathcal{V} \to M_k(\mathbb{C}) : \text{ $\phi$ is unital $k$-positive} \}$$
and we call the elements of $\mathcal{S}_k(\mathcal{V})$ the \emph{$k$-states} on $( \mathcal{V}, C, e)$.
\end{definition}

\begin{proposition} \label{prop: k-min and OMIN_k}
Let $k \in \mathbb{N}$ and suppose $(\mathcal{V}, C, e)$ is a $k$-AOU space. Suppose that $x \in M_n(\mathcal{V})$ for some $n \in \mathbb{N}$. Then the following statements are equivalent:
\begin{enumerate}
    \item $x \in C_n^\text{$k$-min}$.
    \item $\phi_n(x) \in M_{kn}^+$ for every $\phi \in \mathcal{S}_k(\mathcal{V})$.
    \item $\psi_n(x) \in M_{kn}^+$ for every $k$-positive linear map $\psi: \mathcal{V} \to M_k$.
\end{enumerate}
\end{proposition}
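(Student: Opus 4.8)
The plan is to establish the equivalences through the cycle $(1)\Rightarrow(3)\Rightarrow(2)\Rightarrow(1)$, exploiting that $(3)\Rightarrow(2)$ is immediate from the inclusion $\mathcal{S}_k(\mathcal{V})\subseteq\{k\text{-positive maps }\mathcal{V}\to M_k\}$. The engine behind both nontrivial implications is the covariance identity
\[ \psi_k(\alpha^* x\alpha) = (\alpha\otimes I_k)^*\,\psi_n(x)\,(\alpha\otimes I_k), \qquad \alpha\in M_{n,k}, \]
valid for any linear $\psi:\mathcal{V}\to M_k$ and any $x\in M_n(\mathcal{V})$, which I would verify by a direct entrywise computation.

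For $(1)\Rightarrow(3)$, let $x\in C_n^\text{$k$-min}$ and let $\psi$ be $k$-positive, so $\psi_k(C)\subseteq M_{k^2}^+$. The key observation is that every vector $\eta\in\C^{kn}\cong\C^n\otimes\C^k$ has Schmidt rank at most $k$, and hence lies in $\operatorname{ran}(\alpha\otimes I_k)$ for some $\alpha\in M_{n,k}$ (take the columns of $\alpha$ to span the relevant subspace of $\C^n$). Writing $\eta=(\alpha\otimes I_k)\zeta$ and combining the covariance identity with $\alpha^* x\alpha\in C$ gives $\langle\psi_n(x)\eta,\eta\rangle=\langle\psi_k(\alpha^* x\alpha)\zeta,\zeta\rangle\ge 0$. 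As $\eta$ is arbitrary, $\psi_n(x)\in M_{kn}^+$.

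For $(2)\Rightarrow(1)$, I would argue by contrapositive. If $x\notin C_n^\text{$k$-min}$, choose $\alpha$ with $\alpha^* x\alpha\notin C$; since $C$ is the Archimedean-closed cone of the AOU space $(M_k(\mathcal{V}),C,I_k\otimes e)$, separation yields a state $s$ on $M_k(\mathcal{V})$ with $s(\alpha^* x\alpha)<0$. I then convert $s$ into the map $\phi_s:\mathcal{V}\to M_k$, $\phi_s(v)=[\,s(E_{ij}\otimes v)\,]_{ij}$, and record two computations: (a) $\phi_s$ is $k$-positive, because $\langle\phi_{s,k}(c)\xi,\xi\rangle=s(\Xi^* c\,\Xi)\ge 0$ for $c\in C$, where $\Xi\in M_k$ is the matrix obtained by reshaping $\xi$; and (b) $s(\alpha^* x\alpha)=\langle\phi_{s,n}(x)\eta,\eta\rangle$, where $\eta$ is assembled from the rows of $\alpha$. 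Thus $\phi_{s,n}(x)\notin M_{kn}^+$. The obstruction is that $\phi_s$ need not be unital: one only has $\phi_s(e)\ge 0$ with $\operatorname{Tr}\phi_s(e)=1$. To repair this, I would construct an auxiliary state $s_0$ on $M_k(\mathcal{V})$ with $\phi_{s_0}(e)=\tfrac1k I_k$, obtained by extending the functional $A\otimes e\mapsto\tfrac1k\operatorname{Tr}(A)$ from the subspace $M_k\otimes e$; this functional is positive on the cone because $A\otimes e\in C$ forces each diagonal entry $A_{ii}\ge 0$ (using properness of $C$ together with compatibility under $\alpha=E_{ii}$). For the perturbed state $s_\epsilon=(1-\epsilon)s+\epsilon s_0$ the element $\phi_{s_\epsilon}(e)=(1-\epsilon)\phi_s(e)+\tfrac\epsilon k I_k$ is positive-definite, so $\tilde\phi_\epsilon(v)=\phi_{s_\epsilon}(e)^{-1/2}\phi_{s_\epsilon}(v)\phi_{s_\epsilon}(e)^{-1/2}$ is a genuine $k$-state; and for small $\epsilon$ one still has $\langle\phi_{s_\epsilon,n}(x)\eta,\eta\rangle<0$, whence $\tilde\phi_{\epsilon,n}(x)\notin M_{kn}^+$ by congruence with an invertible matrix, contradicting $(2)$.

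I expect the main obstacle to be precisely this final passage from an arbitrary separating state to a bona fide $k$-state: verifying the $k$-positivity of $\phi_s$ through the congruence $\Xi^* c\,\Xi$, and neutralizing the possible singularity of $\phi_s(e)$ via the auxiliary state $s_0$ and the perturbation. A minor preliminary point, that $(2)$ already forces $x$ to be hermitian so that membership in $C_n^\text{$k$-min}$ is meaningful, follows once one knows the $k$-states separate the points of $M_n(\mathcal{V})$.
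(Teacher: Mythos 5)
Your argument is correct, but it takes a genuinely different route from the paper. The paper's proof is essentially a two-line deduction from previously established machinery: it observes that Xhabli's cone $C_n^{\text{$k$-min}}(\mathcal{V}_{\text{$k$-min}})$ (defined via ucp maps into $M_k$) is an extension of $C$, so that Proposition~\ref{prop: properties of V_k-min} and Theorem~\ref{thm: Xhabli's theorem} force the identity map to be a complete order isomorphism between $\mathcal{V}_{\text{$k$-min}}$ and $\mathrm{OMIN}_k(\mathcal{V}_{\text{$k$-min}})$, and then cites \cite[Proposition 3.4]{xhabli2012super} for the equivalence of (2) and (3). You instead give a self-contained duality proof: the covariance identity plus the observation that every vector of $\mathbb{C}^n\otimes\mathbb{C}^k$ factors through some $\alpha\otimes I_k$ with $\alpha\in M_{n,k}$ handles $(1)\Rightarrow(3)$, and for $(2)\Rightarrow(1)$ you run the standard correspondence between states on $(M_k(\mathcal{V}),C,e_k)$ and maps $\mathcal{V}\to M_k$, using Archimedean separation, the compatibility condition $\Xi^*C\,\Xi\subseteq C$, and a perturbation by an auxiliary tracial-type state to repair non-unitality. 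In effect you are reproving the relevant parts of Xhabli's Theorem 3.7 and Proposition 3.4 from the axioms of a $k$-AOU space; what this buys is transparency about exactly where compatibility of the cone and the Archimedean property enter, at the cost of length. All the computations you sketch check out (including the reshaping identities and the congruence argument showing $\tilde\phi_{\epsilon,n}(x)\notin M_{kn}^+$), and the two points you flag as requiring care --- the Krein-type extension of the positive unital functional from $M_k\otimes e$, and the reduction of the hermitianness of $x$ to separation by $k$-states --- are genuine but standard and resolvable exactly as you indicate.
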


\begin{proof}
We will demonstrate the equivalence of (1) and (2). The equivalence of (2) and (3) follows from \cite[Proposition 3.4]{xhabli2012super}.

By Proposition \ref{prop: properties of V_k-min}, the identity map $id: \text{OMIN}_k(\mathcal{V}_\text{$k$-min}) \to \mathcal{V}_\text{$k$-min}$ is unital completely positive, since $\{ C_n^\text{$k$-min}(\mathcal{V}_\text{$k$-min}) \}_{n=1}^\infty$ is an extension of $C$. However, by Theorem \ref{thm: Xhabli's theorem}, $id^{-1}: \mathcal{V}_\text{$k$-min} \to \text{OMIN}_k(\mathcal{V}_\text{$k$-min})$ is also unital completely positive. It follows that $C_n^\text{$k$-min}(\mathcal{V}_\text{$k$-min}) = C_n^\text{$k$-min}$. The equivalence follows from these observations. \qedhere

\end{proof}

\begin{remark} \label{Xhabli-relationship-rem}
Proposition \ref{prop: k-min and OMIN_k} shows that our notion of $\mathcal{V}_\text{$k$-min}$ agrees with Xhabli's notion of $\text{OMIN}_k(\mathcal{V})$ in the following sense. Let $\text{OpSys}$ denote the category whose objects are operator systems and whose morphisms are unital completely positive maps, and, for $k \in \mathbb{N}$, let $\text{$k$-AOU}$ denote the category whose objects are $k$-AOU spaces and whose morphisms are unital $k$-positive maps. Let $F: \text{OpSys} \to \text{$k$-AOU}$ denote the functor that maps $(\mathcal{V}, \mathbfcal{C}, e)$ to $(\mathcal{V}, \mathbfcal{C}_k, e)$ and leaves morphisms unchanged (i.e., the forgetful functor). Likewise, we may regard $(\cdot)_\text{$k$-min}$ as a functor from $\text{$k$-AOU}$ to $\text{OpSys}$, and we may regard $\text{OMIN}_k$ as a functor from $\text{OpSys}$ to itself, each acting in the obvious way on objects and as the identity on morphisms.  Proposition~\ref{prop: k-min and OMIN_k} implies that the diagram 
\[ \begin{tikzcd}
\text{OpSys} \arrow[rd, "\text{OMIN}_k" '] \arrow[r, "F"] & \text{$k$-AOU} \arrow[d, "(\cdot)_\text{$k$-min}"] \\
& \text{OpSys}
\end{tikzcd}
\]
commutes. Moreover, Proposition \ref{prop: k-positive implies cp on k-min} and Corollary \ref{cor: k-min is injective} imply $(\cdot)_\text{$k$-min}$ maps unital $k$-positive maps and unital $k$-order embeddings to unital completely positive maps and unital complete order embeddings.
\end{remark}

We end this section by writing down the analogue of Xhabli's $\text{OMAX}_k$ functor for the category of $k$-AOU spaces and recording its properties.

\begin{definition}[$\text{OMAX}_k(\mathcal{V})$]
Let $(\mathcal{V}, \mathbfcal{C}, e)$ be an operator system. For each $n \in \mathbb{N}$, let
\[ D_n^\text{$k$-max}(\mathcal{V}) := \{ \alpha^* \text{diag}(s_1, \dots, s_m) \alpha: \alpha \in M_{k,n} \text{ and } s_1, \dots, s_m \in \mathbfcal{C}_k \}, \]
and let $C_n^\text{$k$-max}(\mathcal{V})$ denote the Archimedean closure of $D_n^\text{$k$-max}$. We use the notation $\text{OMAX}_k(\mathcal{V})$ to denote the triple $(\mathcal{V}, C_n^\text{$k$-max}(\mathcal{V}), e)$.
\end{definition}

The following theorem of Xhabli details the basic properties of $\text{OMAX}_k(\mathcal{V})$.

\begin{theorem}[Xhabli] \label{thm: Xhabli's OMAX properties}
Let $\mathcal{V}$ be an operator system, and let $k \in \mathbb{N}$. Then $\text{OMAX}_k(\mathcal{V})$ is an operator system. Moreover, the identity map $\text{id}: \text{OMAX}_k(\mathcal{V}) \to \mathcal{V}$ is a unital $k$-order embedding. In particular, the map $\text{id}: \text{OMAX}_k(\mathcal{V}) \to \mathcal{V}$ is completely positive.
\end{theorem}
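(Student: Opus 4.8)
The plan is to mirror the template used for $\mathcal{V}_\text{$k$-min}$ and for the classical OMAX construction: first check that the generating sets $D_n^\text{$k$-max}(\mathcal{V})$ form a compatible family of cones, then pass to their Archimedean closures and verify the operator-system axioms, and finally compare $C_\bullet^\text{$k$-max}(\mathcal{V})$ against $\mathbfcal{C}$ at the $k$th level.

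First I would record the elementary closure properties of $\{D_n^\text{$k$-max}(\mathcal{V})\}$. Closure under positive scalars is immediate, and closure under sums follows by stacking, $\alpha^*\text{diag}(s_1,\dots,s_m)\alpha + \beta^*\text{diag}(t_1,\dots,t_l)\beta = \gamma^*\text{diag}(s_1,\dots,s_m,t_1,\dots,t_l)\gamma$ with $\gamma = \left[\begin{smallmatrix}\alpha\\\beta\end{smallmatrix}\right]$, while compatibility $\delta^* D_n^\text{$k$-max}(\mathcal{V})\,\delta \subseteq D_p^\text{$k$-max}(\mathcal{V})$ for $\delta \in M_{n,p}$ follows by replacing the conjugating matrix $\alpha$ with $\alpha\delta$. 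The crucial containment is $D_n^\text{$k$-max}(\mathcal{V})\subseteq \mathbfcal{C}_n$: since $\mathbfcal{C}$ is a matrix ordering, $\text{diag}(s_1,\dots,s_m) \in \mathbfcal{C}_{mk}$ whenever each $s_i \in \mathbfcal{C}_k$, and conjugation by a scalar matrix keeps this in the cone. Because an operator system has an Archimedean-closed matrix ordering, passing to Archimedean closures yields $C_n^\text{$k$-max}(\mathcal{V})\subseteq\mathbfcal{C}_n$ for every $n$, and properness of each $C_n^\text{$k$-max}(\mathcal{V})$ is then inherited from properness of $\mathbfcal{C}_n$.

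The heart of the proof is verifying that $e$ is a matrix order unit, and the key step is to show $e_n \in D_n^\text{$k$-max}(\mathcal{V})$ and that every $x \in M_n(\mathcal{V})_h$ satisfies $re_n + x \in D_n^\text{$k$-max}(\mathcal{V})$ for large $r$. For $e_n$ one takes enough copies of $e_k = I_k\otimes e \in \mathbfcal{C}_k$ and conjugates their direct sum by an isometry $\alpha$ with $\alpha^*\alpha = I_n$. For the order-unit property I would decompose $x$ into its diagonal entries and its off-diagonal pairs $x_{ij}E_{ij}+x_{ji}E_{ji}$, each supported on at most two coordinates; using that $e$ is already an Archimedean matrix order unit for $\mathbfcal{C}$, one dominates each such piece by a multiple of $e$ arising from a single $\mathbfcal{C}_k$ block (padded with zeros and repositioned by a scalar conjugation), then sums these finitely many $D_n^\text{$k$-max}(\mathcal{V})$-elements and absorbs the excess diagonal into $re_n$. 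I expect this decomposition, together with getting the bookkeeping of the conjugating matrices right—including the degenerate case $k=1$, where the off-diagonal pieces must instead be realized with $1\times1$ blocks and row vectors such as $[1,1]$ and $[1,i]$—to be the main obstacle.

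With the order unit in hand, I would check that the closures $C_n^\text{$k$-max}(\mathcal{V})$ remain a compatible family; the only nontrivial point is compatibility, which follows by writing $re_p + \delta^* x\delta = (re_p - s\,\delta^* e_n\delta) + \delta^*(se_n + x)\delta$ with $s = r/\norm{\delta}^2$, so that both summands lie in $D_p^\text{$k$-max}(\mathcal{V})$. Archimedean-closedness holds by construction, and properness comes from the containment above, so $(\mathcal{V}, \{C_n^\text{$k$-max}(\mathcal{V})\}, e)$ is an operator system. Finally, for the order-embedding statement I would prove $C_k^\text{$k$-max}(\mathcal{V}) = \mathbfcal{C}_k$: the inclusion $\subseteq$ is the $n=k$ case of the containment from the first step, while $\supseteq$ holds because any $s \in \mathbfcal{C}_k$ equals $I_k^* s I_k \in D_k^\text{$k$-max}(\mathcal{V})$ (take $m=1$, $\alpha = I_k$). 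This makes $\id$ a unital $k$-order isomorphism at the $k$th level, hence a $k$-order embedding, and the containment $C_n^\text{$k$-max}(\mathcal{V})\subseteq\mathbfcal{C}_n$ at all levels gives complete positivity.
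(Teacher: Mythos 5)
Your proposal is correct, but note that the paper does not actually prove this statement: it is quoted as a theorem of Xhabli and justified only by citation to \cite{xhabli2012super}, so there is no in-paper argument to compare against. What you have written is essentially a reconstruction of the standard OMAX argument (the $k=1$ case in Paulsen--Todorov--Tomforde, generalized by Xhabli): check that the sets $D_n^{k\text{-max}}$ form a compatible family of cones sitting inside $\mathbfcal{C}_n$, establish the order-unit property by decomposing a hermitian matrix into diagonal entries and $2\times 2$-supported off-diagonal pieces each realized inside a single $M_k(\mathcal{V})$ block, and then verify that compatibility survives the Archimedean closure via the identity $re_p+\delta^*x\delta=s\gamma^*e_p\gamma+\delta^*(se_n+x)\delta$ with $\|\delta\|^2I_p-\delta^*\delta=\gamma^*\gamma$. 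All of these steps are sound, and you correctly flag the two genuinely delicate points: the order-unit step is where the work is, and the case $k=1$ requires realizing the off-diagonal pieces with scalar row vectors such as $[1,1]$ and $[1,i]$ after splitting each entry into a difference of positives. One small item worth making explicit: the displayed definition of $D_n^{k\text{-max}}(\mathcal{V})$ in the paper reads $\alpha\in M_{k,n}$, which is dimensionally inconsistent with $\mathrm{diag}(s_1,\dots,s_m)\in M_{mk}(\mathcal{V})$ unless $m=1$; your argument silently (and correctly) uses $\alpha\in M_{mk,n}$, and it would be worth stating that correction, since with $m=1$ only the whole construction would collapse.
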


\begin{remark} \label{rmk: OMAX = V_kmax}
Given a $k$-AOU space $(\mathcal{V}, C, e)$, we may define, for each $n \in \mathbb{N}$, cones \[ D_n^\text{$k$-max} := \{ \alpha^* \text{diag}(s_1, \dots, s_m) \alpha: \alpha \in M_{k,n} \text{ and } s_1, \dots, s_m \in C \}, \]
and let $C_n^\text{$k$-max}$ denote the Archimedean closure of $D_n^\text{$k$-max}$. If $\mathcal{W}$ is an operator system structure on the $k$-AOU space $\mathcal{V}$, then it is clear that $C_n^\text{$k$-max}(\mathcal{W}) = C_n^\text{$k$-max}$. It follows from Theorem \ref{thm: Xhabli's OMAX properties} that \[ \mathcal{V}_\text{$k$-max} := (\mathcal{V}, C^\text{$k$-max}, e) \] is an operator system with the property that the identity map $\text{id}: \mathcal{V}_\text{$k$-max} \to \mathcal{V}$ is a unital $k$-order embedding for any operator system structure on the range. In particular, $\text{id}: \mathcal{V}_\text{$k$-max} \to \mathcal{V}$ is completely positive. 
\end{remark}

\begin{corollary} \label{cor: extensions of k-AOU spaces agree on levels preceeding k}
Let $(\mathcal{V}, C, e)$ be a $k$-AOU space, and suppose that $\mathbfcal{C}$ is an extension of $C$. Then $\mathbfcal{C}_n = C_n^\text{$k$-min} = C_n^\text{$k$-max}$ for every $n \in \{1,2,\dots,k\}$.
\end{corollary}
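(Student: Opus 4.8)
The plan is to observe that most of the work has already been carried out in Proposition~\ref{prop: properties of V_k-min}, so that the only genuinely new content is the equality $C_n^\text{$k$-min} = C_n^\text{$k$-max}$ for $n \in \{1,\dots,k\}$. Applying Proposition~\ref{prop: properties of V_k-min} directly to the given extension $\mathbfcal{C}$ yields $\mathbfcal{C}_n = C_n^\text{$k$-min}$ for every $n \in \{1,\dots,k\}$, so the first of the two claimed equalities is immediate and requires no further argument.

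For the second equality, the key observation I would isolate is that $C^\text{$k$-max}$ is \emph{itself} an extension of $C$. By Remark~\ref{rmk: OMAX = V_kmax}, the triple $\mathcal{V}_\text{$k$-max} = (\mathcal{V}, C^\text{$k$-max}, e)$ is an operator system, so $C^\text{$k$-max}$ is an Archimedean closed matrix ordering on $\mathcal{V}$. Moreover, since the identity map $\text{id}: \mathcal{V}_\text{$k$-max} \to (\mathcal{V},\mathbfcal{C},e)$ is a unital $k$-order embedding, its behavior at the $k$\textsuperscript{th} level forces $C_k^\text{$k$-max} = \mathbfcal{C}_k = C$. Thus $C^\text{$k$-max}$ meets both requirements in the definition of an extension of $C$.

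With that in hand, I would simply invoke Proposition~\ref{prop: properties of V_k-min} a second time, now taking the extension to be $C^\text{$k$-max}$ in place of $\mathbfcal{C}$. This gives $C_n^\text{$k$-max} = C_n^\text{$k$-min}$ for every $n \in \{1,\dots,k\}$, which is precisely the second equality. Chaining the two applications produces $\mathbfcal{C}_n = C_n^\text{$k$-min} = C_n^\text{$k$-max}$ on levels $1$ through $k$, as desired.

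The only step requiring any care — and the one I expect to be the sole potential obstacle — is confirming that $C^\text{$k$-max}$ genuinely qualifies as an extension in the precise sense of the definition, namely that it is Archimedean closed (part of the assertion that $\mathcal{V}_\text{$k$-max}$ is an operator system) and that its level-$k$ cone equals $C$ (a consequence of the $k$-order embedding property recorded in Remark~\ref{rmk: OMAX = V_kmax}). Once this verification is complete, the corollary is a formal consequence of applying Proposition~\ref{prop: properties of V_k-min} twice.
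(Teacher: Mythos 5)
Your proposal is correct and follows exactly the route the paper intends: the corollary is stated without proof precisely because it is the two-fold application of Proposition~\ref{prop: properties of V_k-min} you describe, first to the given extension $\mathbfcal{C}$ and then to $C^\text{$k$-max}$, whose status as an extension (Archimedean closed matrix ordering with $k$\textsuperscript{th} cone equal to $C$) is exactly what Remark~\ref{rmk: OMAX = V_kmax} records. Your flagging of the verification that $C^\text{$k$-max}$ is a bona fide extension as the only point requiring care is also apt.
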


As a consequence of these observations, we see for a given $k$-AOU space $(\mathcal{V}, C, e)$, there is a unique positive cone $C_n \subseteq M_n(\mathcal{V})_h$ for $n \in \{1,2,\dots,k\}$ that agrees with the positive cone for any operator system structure on $\mathcal{V}$. Consequently, given a $k$-AOU space $\mathcal{V}$, we can talk about the positive elements of $M_n(\mathcal{V})$ for any $n \in \{1,2,\dots,k\}$ without ambiguity. In particular, we can describe the positive elements of $\mathcal{V}$. 

\begin{corollary} \label{cor: positive at 1st level}
Let $(\mathcal{V},C,e)$ be a $k$-AOU space, and suppose that $\mathbfcal{C}$ is an extension of $C$. Then $x \in \mathbfcal{C}_1$ if and only if $I_k \otimes x \in C$.
\end{corollary}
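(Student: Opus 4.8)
The plan is to prove both implications directly from the matrix-ordering axiom for $\mathbfcal{C}$, using only that $\mathbfcal{C}_k = C$ and that each cone $\mathbfcal{C}_n$ is closed under finite sums; in particular the argument will not require the preceding corollaries. The key observation is that $I_k \otimes x$ and $x$ are each obtained from the other by conjugating with a scalar rectangular matrix, so the compatibility relation transports positivity back and forth between level $1$ and level $k$.

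For the forward implication, I would suppose $x \in \mathbfcal{C}_1$ and, for each $i \in \{1,\dots,k\}$, let $\varepsilon_i \in M_{1,k}$ be the row vector with a $1$ in the $i$-th coordinate and zeros elsewhere. The matrix-ordering condition $\varepsilon_i^* \mathbfcal{C}_1 \varepsilon_i \subseteq \mathbfcal{C}_k$ then yields $\varepsilon_i^* x \varepsilon_i \in \mathbfcal{C}_k = C$, and a direct entrywise computation shows $\varepsilon_i^* x \varepsilon_i$ is the element of $M_k(\mathcal{V})$ with $x$ in the $(i,i)$ entry and $0$ elsewhere. Since $C$ is a cone, summing over $i$ gives $I_k \otimes x = \sum_{i=1}^k \varepsilon_i^* x \varepsilon_i \in C$.

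For the reverse implication, I would suppose $I_k \otimes x \in C = \mathbfcal{C}_k$ and let $\delta \in M_{k,1}$ be the column vector with a $1$ in its first coordinate and zeros elsewhere. Then $\delta^*(I_k \otimes x)\delta$ simply extracts the $(1,1)$ entry of $I_k \otimes x$, which is $x$, and the compatibility condition $\delta^* \mathbfcal{C}_k \delta \subseteq \mathbfcal{C}_1$ forces $x \in \mathbfcal{C}_1$.

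This corollary is essentially routine, so I do not anticipate a genuine obstacle; the only point demanding care is bookkeeping the index conventions, since the matrix-ordering axiom reads $\alpha^* \mathbfcal{C}_n \alpha \subseteq \mathbfcal{C}_m$ for $\alpha \in M_{n,m}$. One must confirm that the forward direction applies it with $\alpha = \varepsilon_i \in M_{1,k}$ (so $n=1,\ m=k$) while the reverse applies it with $\alpha = \delta \in M_{k,1}$ (so $n=k,\ m=1$). As an alternative, one could instead invoke Corollary~\ref{cor: extensions of k-AOU spaces agree on levels preceeding k} to identify $\mathbfcal{C}_1$ with $C_1^\text{$k$-min}$ and combine it with the identity $\gamma^*(I_k \otimes x)\gamma = (\gamma^*\gamma) \otimes x$ for $\gamma \in M_k$; but the direct conjugation argument above is shorter and avoids appealing to the $k$-minimal structure at all.
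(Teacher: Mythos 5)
Your proof is correct, and it supplies exactly the computation the paper leaves implicit: the corollary is stated there without proof as a consequence of the agreement of all extensions at levels $1,\dots,k$, and the verification behind that is precisely your conjugation by standard unit row and column vectors together with closure of the cone under sums. Your index bookkeeping against the axiom $\alpha^* \mathbfcal{C}_n \alpha \subseteq \mathbfcal{C}_m$ for $\alpha \in M_{n,m}$ is right in both directions, so no issues.
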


\section{The injective envelope for a $k$-AOU space}\label{sec: injectivity}

The goal of this section is to prove the existence of an injective envelope in the category of $k$-AOU spaces and to discuss its properties. We first recall the definition of the injective envelope in the category $\text{OpSys}$, and then define what is meant by an injective envelope in the category $\text{$k$-AOU}$.

\begin{definition}
An operator system $\mathcal{T}$ is called \textit{injective} if whenever $\mathcal{V} \subseteq \widetilde{\mathcal{V}}$ is an inclusion of operator systems and $\varphi: \mathcal{V} \to \mathcal{T}$ is a unital completely positive map, there exists a unital completely positive extension $\widetilde{\varphi}: \widetilde{\mathcal{V}} \to \mathcal{T}$; i.e. $\widetilde{\varphi}$ is unital completely positive, and its restriction to $\mathcal{V}$ is equal to $\varphi$.
\end{definition}

In his seminal work \cite{Arveson69}, Arveson proved that whenever $H$ is a Hilbert space, the C*-algebra $B(H)$ is an injective operator system. By the representation theorem of Choi and Effros in \cite{choi1977injectivity}, every operator system $\mathcal{V}$ arises as a subsystem of $B(H)$ for some Hilbert space $H$. Therefore every operator system may be regarded as a subsystem of an injective operator system. A remarkable result of Hamana \cite{hamana1979injective} implies that every operator system is contained in a canonical ``smallest'' injective operator system called the injective envelope.

\begin{definition}
Let $\mathcal{V}$ be an operator system. An \textit{injective envelope} for $\mathcal{V}$ is a pair $(\mathcal{I}, \kappa)$ satisfying the following properties:
\begin{enumerate}
    \item $\mathcal{I}$ is injective.
    \item $\kappa: \mathcal{V} \to \mathcal{I}$ is a unital complete order embedding.
    \item If $\mathcal{W}$ is another operator system and $\varphi: \mathcal{I} \to \mathcal{W}$ restricts to a unital complete order embedding on $\kappa(\mathcal{V})$, then $\varphi$ is a unital complete order embedding on $\mathcal{I}$.
\end{enumerate}
\end{definition}

\begin{remark}
If $\kappa: \mathcal{V} \to \mathcal{I}$ satisfies conditions (2) and (3) in the preceding definition, then $(\mathcal{I}, \kappa)$ is called an \textit{essential extension}. Thus, an injective envelope is an injective essential extension. Hamana proved that the injective envelope is a minimal injective extension, in the sense that if $\mathcal{W}$ is injective and $\kappa(\mathcal{V}) \subseteq \mathcal{W} \subseteq \mathcal{I}$, then $\mathcal{W} = \mathcal{I}$.
\end{remark}

It was shown by Hamana in \cite{hamana1979injective} that every operator system has an injective envelope, denoted $I(\mathcal{V})$, and that this injective envelope is unique up to unital complete order isomorphism.

We now extend the notions of ``injective'' and ``injective envelope'' to the category $\text{$k$-AOU}$.

\begin{definition}
A $k$-AOU space $\mathcal{T}$ is called \textit{injective} if whenever $\mathcal{V} \subseteq \widetilde{\mathcal{V}}$ is an inclusion of $k$-AOU spaces and $\varphi: \mathcal{V} \to \mathcal{T}$ is a unital $k$-positive map, there exists a unital $k$-positive extension $\widetilde{\varphi}: \widetilde{\mathcal{V}} \to \mathcal{T}$; i.e., $\widetilde{\varphi}$ is unital $k$-positive, and its restriction to $\mathcal{V}$ is equal to $\varphi$.
\end{definition}

\begin{definition}

Given a $k$-AOU space $\mathcal{V}$, an \textit{injective envelope} for $\mathcal{V}$ is a pair $(\mathcal{I}, \kappa)$ satisfying the following properties:
\begin{enumerate}
    \item $\mathcal{I}$ is injective.
    \item $\kappa: \mathcal{V} \to \mathcal{I}$ is a unital $k$-order embedding.
    \item If $\mathcal{W}$ is another $k$-AOU space and $\varphi: \mathcal{I} \to \mathcal{W}$ restricts to a unital $k$-order embedding on $\kappa(\mathcal{V})$, then $\varphi$ is a unital $k$-order embedding on $\mathcal{I}$.
\end{enumerate}
\end{definition}

\begin{remark}
When $H$ is a Hilbert space and $\dim(H) > k$, the $k$-AOU space $B(H)$ is not an injective $k$-AOU space (see Remark \ref{rmk: M_n not injective for k-AOU}). However, we will see that $B(H)$ is injective when $\dim(H) \leq k$ (see Lemma \ref{lem: M_k is injective}), and we will demonstrate other examples of injective $k$-AOU spaces as well (see Proposition \ref{rmk: ell infinity is injective relative to operator systems and cp maps}). Since injective operator systems may fail to be injective as $k$-AOU spaces, the existence of an injective envelope for a $k$-AOU space is not immediate from Hamana's Theorem.
\end{remark}

\begin{definition}
Let $k \in \mathbb N$. An operator system $\mathcal V$ is called \emph{$k$-minimal} if $\mathcal{V}$ is completely order isomorphic to $\text{OMIN}_k(\mathcal{V})$.
\end{definition}

For the remainder of this section, given a set $I$, we will let $\mathcal D^I:= \bigoplus_{i \in I} M_k^i$ denote the $\ell_\infty$-direct sum of matrix algebras of size $k$, where $M_k^i = M_k$ for each $i \in I$. The following proposition is a consequence of \cite[Theorem~3.1]{xhabli2012super}.

\begin{proposition}\label{prop: k-minimal system if and only if subsystem of ell infinity}
Let $k \in \mathbb N.$ An operator system $\mathcal V$ is $k$-minimal if and only if there exists a complete order embedding $i: \mathcal V \to \mathcal{D}^I$ for some set $I$.
\end{proposition}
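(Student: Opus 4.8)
The plan is to prove both implications directly from the definition of the cones $C_n^\text{$k$-min}(\mathcal{V})$, which are determined by the unital completely positive (ucp) maps $\phi : \mathcal{V} \to M_k$. Throughout I would use the standard identification $M_n(\mathcal{D}^I) \cong \bigoplus_{i \in I} M_n(M_k^i) = \bigoplus_{i \in I} M_{nk}$ as an $\ell_\infty$-direct sum of C*-algebras, for which the positive cone is computed coordinatewise: an element is positive if and only if each of its components is positive in $M_{nk}$. This coordinatewise description of $M_n(\mathcal{D}^I)^+$ is the structural fact that drives both directions.

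For the forward implication, suppose $\mathcal{V}$ is $k$-minimal, so that its matrix cones $\mathbfcal{C}_n$ coincide with $C_n^\text{$k$-min}(\mathcal{V})$ for all $n$. I would take $I$ to be the set of all ucp maps $\phi : \mathcal{V} \to M_k$ (this is a set, being a subcollection of the linear maps $\mathcal{V} \to M_k$) and define $\iota : \mathcal{V} \to \mathcal{D}^I$ by $\iota(x) = (\phi(x))_{\phi \in I}$. Since each coordinate is ucp, $\iota$ is unital and completely positive. The key point is that, by the coordinatewise description of positivity, $\iota_n(x) \geq 0$ holds exactly when $\phi_n(x) \geq 0$ for every $\phi \in I$, which by definition says $x \in C_n^\text{$k$-min}(\mathcal{V}) = \mathbfcal{C}_n$. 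Thus $\iota_n^{-1}(M_n(\mathcal{D}^I)^+) = \mathbfcal{C}_n$ for all $n$. Injectivity follows from properness of $\mathbfcal{C}_1$: if $\iota(x) = 0$, writing $x = a + \mathbf{i} b$ with $a,b$ hermitian and using that $\iota$ preserves the $*$-operation, one gets $\iota(a) = \iota(b) = 0$, whence $a, b \in \mathbfcal{C}_1 \cap -\mathbfcal{C}_1 = \{0\}$. Therefore $\iota$ is a complete order embedding.

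For the reverse implication, suppose $\iota : \mathcal{V} \to \mathcal{D}^I$ is a complete order embedding. By Theorem~\ref{thm: Xhabli's theorem} the identity $\id : \mathcal{V} \to \text{OMIN}_k(\mathcal{V})$ is always completely positive, giving $\mathbfcal{C}_n \subseteq C_n^\text{$k$-min}(\mathcal{V})$; so it remains only to prove the reverse containment. Given $x \in C_n^\text{$k$-min}(\mathcal{V})$, observe that for each coordinate projection $\pi_j : \mathcal{D}^I \to M_k^j = M_k$ the composite $\pi_j \circ \iota : \mathcal{V} \to M_k$ is a ucp map, so $(\pi_j)_n(\iota_n(x)) = (\pi_j \circ \iota)_n(x) \geq 0$. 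As positivity in $M_n(\mathcal{D}^I)$ is coordinatewise, this forces $\iota_n(x) \geq 0$, and since $\iota$ is a complete order embedding, $x \in \mathbfcal{C}_n$. Hence $\mathbfcal{C}_n = C_n^\text{$k$-min}(\mathcal{V})$ at every level, and $\mathcal{V}$ is $k$-minimal.

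The step to be careful about — and the only real content beyond unwinding definitions — is the identification of $M_n(\mathcal{D}^I)$ as an $\ell_\infty$-direct sum with its positive cone taken coordinatewise. This is precisely what lets the two halves interlock: it turns "positivity under every ucp map into $M_k$" into a single positivity condition in $\mathcal{D}^I$ (forward direction), and it lets the coordinate projections recover every such ucp map (reverse direction). Everything else is a direct consequence of the definition of $C_n^\text{$k$-min}(\mathcal{V})$ together with the fact that compositions and direct sums of ucp maps are ucp.
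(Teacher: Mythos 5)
Your proof is correct, but it takes a genuinely different route from the paper's. The paper outsources the substance to Xhabli's Theorem~3.1 (quoted as: $\mathcal{V}$ is $k$-minimal iff it embeds unitally and completely order isomorphically into $M_k(C(\mathcal{S}_k(\mathcal{V})))$) and then merely writes down the evaluation map $j\colon M_k(C(\mathcal{S}_k(\mathcal{V})))\to\mathcal{D}^{\mathcal{S}_k(\mathcal{V})}$ and composes; the converse direction is left essentially implicit. You instead argue directly from the definition of the cones $C_n^{\text{$k$-min}}(\mathcal{V})$, in effect reproving the relevant part of Xhabli's theorem with the continuous-function algebra replaced by the full $\ell_\infty$-product over all ucp maps into $M_k$ --- which is cleaner here, since continuity plays no role once one only wants an embedding into $\mathcal{D}^I$. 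What your approach buys is self-containedness and an explicit treatment of both implications; what the paper's buys is brevity and a pointer to the topologized version of the statement. Two small points you should tidy. First, in the forward direction you should note that the tuple $(\phi(x))_{\phi\in I}$ actually lies in the $\ell_\infty$-direct sum: this holds because ucp maps are contractive, so $\sup_\phi\lVert\phi(x)\rVert\le\lVert x\rVert$. Second, in the reverse direction you assert that $\pi_j\circ\iota$ is \emph{unital} completely positive; this requires $\iota$ to be unital. The embedding you construct in the forward direction is unital, and the paper uses the proposition only for unital embeddings, so you may simply add the word ``unital'' to the statement; alternatively, for a non-unital complete order embedding one can still conclude via the equivalence of positivity under all ucp maps and under all ($k$-)positive maps into $M_k$ (Proposition~\ref{prop: k-min and OMIN_k}, item (3)), since each $\pi_j\circ\iota$ is at least completely positive. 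Neither point affects the correctness of the argument.
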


\begin{proof}
By \cite[Theorem 3.1]{xhabli2012super}, an operator system $\mathcal{V}$ is $k$-minimal if and only if there exists a unital complete order embedding $u: \mathcal{V} \to M_k(C(\mathcal{S}_k(\mathcal{V})))$, where $M_k(C(\mathcal{S}_k(\mathcal{V})))$ denotes the set of functions $f: \mathcal{S}_k(\mathcal{V}) \to M_k$ continuous with respect to the weak-$*$ topology on $\mathcal{S}_k(\mathcal{V})$. Let $I = \mathcal{S}_k(\mathcal{V})$, and define $j: M_k(C(\mathcal{S}_k(\mathcal{V}))) \to \mathcal{D}^I$ by defining the $\varphi$\textsuperscript{th} component using
\[ j( f )_{\varphi} = (f(\varphi)) \]
for each $\varphi \in I$. Then $j$ is a unital complete order embedding, and hence $i = j \circ u$ defines a unital complete order embedding $i: \mathcal{V} \to \mathcal{D}^I$.
\end{proof}

\begin{remark} \label{rmk: M_n not injective for k-AOU}
Let $n > k$, and let $\mathcal{V}$ be an operator system. Suppose that $\phi: \mathcal{V} \to M_n$ is $k$-positive. It follows that $\phi: \text{OMIN}_k(\mathcal{V}) \to M_n$ is also $k$-positive. If $M_n$ were injective in the category $k$-AOU, then there would exist a $k$-positive extension $\widetilde{\phi}: \mathcal{D}^I \to M_n$. The map $\widetilde{\phi}$ is completely positive if and only if the restriction to each of its summands $M_k^i$ is completely positive. However, by Theorem 3.14 of \cite{paulsen2002completely}, any $k$-postive map on $M_k$ is completely positive. Thus $\widetilde{\phi}$ would be a completely positive extension of $\phi$. It follows that $\phi$ is completely positive on $\text{OMIN}_k(\mathcal{V})$, and consequently $\phi$ is completely positive on on $\mathcal{V}$. However, this is absurd, since there exist linear maps on $M_n$ which are $k$-positive but fail to be $(k+1)$-positive (see Theorem 1 of \cite{choiPositiveMaps1972}). It follows that $M_n$ is not injective in the category of $k$-AOU spaces. In spite of this, we see below that the situation is different when $n \leq k$.
\end{remark}

\begin{lemma} \label{lem: M_k is injective}
Let $k \in \mathbb N$. Then $M_k$ is injective in the category $k$-AOU. 
\end{lemma}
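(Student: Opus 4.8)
The plan is to reduce the injectivity of $M_k$ in the category $\text{$k$-AOU}$ to the injectivity of $M_k$ as an operator system, which holds by Arveson's theorem since $M_k = B(\mathbb{C}^k)$. The bridge between the two categories is the $(\cdot)_\text{$k$-min}$ construction, together with the observation that $k$-positivity of a map into $M_k$ is exactly complete positivity into $(M_k)_\text{$k$-min}$, and that this latter operator system is nothing but $M_k$ with its usual structure.

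First I would record that $M_k$, viewed as the $k$-AOU space $(M_k, M_{k^2}^+, I_k)$, is $k$-minimal; that is, $(M_k)_\text{$k$-min}$ coincides with $M_k$ equipped with its standard operator system structure. This follows from Proposition~\ref{prop: k-min and OMIN_k}: a matrix $x \in M_n(M_k)$ lies in $C_n^\text{$k$-min}$ if and only if $\phi_n(x) \geq 0$ for every $\phi \in \mathcal{S}_k(M_k)$. Since the identity map is a $k$-state, taking $\phi = \text{id}$ forces $x \geq 0$ in $M_{nk}$, while the reverse inclusion is immediate because every $k$-state is unital completely positive. Hence $C_n^\text{$k$-min} = M_{nk}^+$ for all $n$, as claimed.

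Next, given an inclusion $\mathcal{V} \subseteq \widetilde{\mathcal{V}}$ of $k$-AOU spaces and a unital $k$-positive map $\varphi: \mathcal{V} \to M_k$, I would pass to $k$-minimal operator systems. By Corollary~\ref{cor: k-min is injective} the inclusion induces a unital complete order embedding $\mathcal{V}_\text{$k$-min} \hookrightarrow \widetilde{\mathcal{V}}_\text{$k$-min}$, so we may regard $\mathcal{V}_\text{$k$-min}$ as an operator subsystem of $\widetilde{\mathcal{V}}_\text{$k$-min}$. By Proposition~\ref{prop: k-positive implies cp on k-min}, the map $\varphi: \mathcal{V}_\text{$k$-min} \to (M_k)_\text{$k$-min} = M_k$ is unital completely positive. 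Since $M_k$ is injective as an operator system, $\varphi$ extends to a unital completely positive map $\widetilde{\varphi}: \widetilde{\mathcal{V}}_\text{$k$-min} \to M_k$. Finally I would descend from complete positivity back to $k$-positivity: the map $\widetilde{\varphi}_k$ sends the level-$k$ cone of $\widetilde{\mathcal{V}}_\text{$k$-min}$ into $M_{k^2}^+$, and since that cone equals the original cone $\widetilde{C}$ of the $k$-AOU space $\widetilde{\mathcal{V}}$ (recall $C_k^\text{$k$-min} = \widetilde{C}$, as in Proposition~\ref{prop: properties of V_k-min} at level $n=k$), the map $\widetilde{\varphi}: \widetilde{\mathcal{V}} \to M_k$ is unital, $k$-positive, and restricts to $\varphi$ on $\mathcal{V}$. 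This is precisely the required extension.

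The translations via Proposition~\ref{prop: k-positive implies cp on k-min} and Corollary~\ref{cor: k-min is injective} are routine bookkeeping; the one genuinely load-bearing input is the operator-system injectivity of $M_k = B(\mathbb{C}^k)$ due to Arveson. The step requiring the most care is the identification $(M_k)_\text{$k$-min} = M_k$, which is what guarantees that the completely positive extension produced by operator-system injectivity lands in the correct operator system and therefore restricts to a $k$-positive map on the original $k$-AOU spaces.
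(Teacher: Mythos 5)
Your proposal is correct and follows essentially the same route as the paper: both reduce the statement to the operator-system injectivity of $M_k$ by passing through the $(\cdot)_\text{$k$-min}$ construction, using Corollary~\ref{cor: k-min is injective} to embed $\mathcal{V}_\text{$k$-min}$ into $\widetilde{\mathcal{V}}_\text{$k$-min}$ and the fact that a unital $k$-positive map into $M_k$ is completely positive. Your explicit verification that $(M_k)_\text{$k$-min} = M_k$ is a point the paper leaves implicit, but it is the same argument.
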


\begin{proof}
Let $\cal V \subseteq \wt{\cal V}$ be an inclusion of $k$-AOU spaces. By Corollary~\ref{cor: k-min is injective} we have $\cal V_{\tx{$k$-min}} \subseteq \wt{\cal V}_{\tx{$k$-min}}$ as operator systems. Let $\vp: \cal V \to M_k$ be a unital $k$-positive map. Then $\vp$ is $k$-positive as a map from $\cal V_{\tx{$k$-min}}$ to $M_k$ and it necessarily follows that $\vp$ is unital completely positive.  By injectivity of the operator system $M_k$ there exists an extension $\wt{\vp}: \wt{\cal V}_{\tx{$k$-min}} \to M_k$ that is (unital) completely positive. It follows that $\wt{\vp}: \wt{\cal V} \to M_k$ is $k$-positive and necessarily extends the $k$-positive map $\vp.$ 
\end{proof}

\begin{lemma}\label{lem: k-positive maps into ell infinity are cp on k-min}
Let $I$ be a set and let $\cal V$ be a $k$-AOU space. If $u: \cal V \to \mathcal D^I$ is a $k$-positive map, then $u: \cal V \to \mathcal D^I$ is completely positive with respect to any operator system structure on $\mathcal V.$
\end{lemma}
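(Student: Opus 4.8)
The plan is to reduce the statement to a single application of Proposition~\ref{prop: k-positive implies cp on k-min}, once we know that $\mathcal{D}^I$ is its own $k$-minimal operator system. The key observation I would make first is that $\mathcal{D}^I$, viewed as an operator system, is $k$-minimal. This is immediate from Proposition~\ref{prop: k-minimal system if and only if subsystem of ell infinity}: the identity map is a complete order embedding $\mathcal{D}^I \to \mathcal{D}^I$, so $\mathcal{D}^I$ is $k$-minimal, i.e. $\text{OMIN}_k(\mathcal{D}^I) = \mathcal{D}^I$. Combining this with Proposition~\ref{prop: k-min and OMIN_k} (equivalently, with the commuting diagram of Remark~\ref{Xhabli-relationship-rem}), which identifies the cones $C_n^\text{$k$-min}$ of $(\mathcal{D}^I)_\text{$k$-min}$ with the cones $C_n^\text{$k$-min}(\mathcal{D}^I)$ of $\text{OMIN}_k(\mathcal{D}^I)$, I obtain the identification $(\mathcal{D}^I)_\text{$k$-min} = \mathcal{D}^I$ as operator systems.

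With this in hand the result is nearly automatic. Since $u: \mathcal{V} \to \mathcal{D}^I$ is a $k$-positive map between the $k$-AOU spaces $\mathcal{V}$ and $\mathcal{D}^I$, Proposition~\ref{prop: k-positive implies cp on k-min} shows that $u: \mathcal{V} \to (\mathcal{D}^I)_\text{$k$-min}$ is completely positive with respect to any operator system structure on $\mathcal{V}$. Because $(\mathcal{D}^I)_\text{$k$-min} = \mathcal{D}^I$, this is exactly the assertion that $u: \mathcal{V} \to \mathcal{D}^I$ is completely positive.

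As an alternative I would argue coordinatewise, which avoids the global $k$-minimality statement: for each $i \in I$ the coordinate projection $\pi_i: \mathcal{D}^I \to M_k$ is a unital $*$-homomorphism, hence completely positive and in particular $k$-positive, so $u_i := \pi_i \circ u$ is $k$-positive as a composition of $k$-positive maps. Proposition~\ref{prop: k-positive implies cp on k-min} then makes $u_i: \mathcal{V} \to (M_k)_\text{$k$-min}$ completely positive, and since the identity map is a $k$-state on $M_k$, Proposition~\ref{prop: k-min and OMIN_k} forces $(M_k)_\text{$k$-min} = M_k$. Because positivity in $M_n(\mathcal{D}^I)$ is detected entrywise by the amplified projections $(\pi_i)_n$, complete positivity of every $u_i$ yields that of $u$.

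I expect no genuine obstacle here: the only real content is the identification $(\mathcal{D}^I)_\text{$k$-min} = \mathcal{D}^I$ (i.e. the $k$-minimality of $\mathcal{D}^I$), which is a direct consequence of Proposition~\ref{prop: k-minimal system if and only if subsystem of ell infinity}, while the remainder is a bookkeeping application of the functoriality already recorded in Proposition~\ref{prop: k-positive implies cp on k-min} and Remark~\ref{Xhabli-relationship-rem}.
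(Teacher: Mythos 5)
Your proposal is correct and follows essentially the same route as the paper, whose proof is the one-line observation that the claim ``follows from Proposition~\ref{prop: k-positive implies cp on k-min} and the fact that $\mathcal{D}^I$ is $k$-minimal.'' You simply spell out the two ingredients the paper leaves implicit (the $k$-minimality of $\mathcal{D}^I$ via Proposition~\ref{prop: k-minimal system if and only if subsystem of ell infinity} and the identification $(\mathcal{D}^I)_{\text{$k$-min}} = \mathcal{D}^I$ via Proposition~\ref{prop: k-min and OMIN_k}), and both are verified correctly.
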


\begin{proof}
This follows from Proposition \ref{prop: k-positive implies cp on k-min} and the fact that $\mathcal{D}^I$ is $k$-minimal.
\end{proof}

\begin{lemma} \label{rmk: ell infinity is injective relative to operator systems and cp maps}
For any set $I$, the operator system $\mathcal{D}^I$ is injective.
\end{lemma}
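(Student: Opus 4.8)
The plan is to deduce injectivity of $\mathcal{D}^I$ from the injectivity of the single operator system $M_k = B(\mathbb{C}^k)$ (Arveson), exploiting the fact that both positivity and unitality in an $\ell_\infty$-direct sum are detected coordinatewise. Concretely, for each $i \in I$ let $\pi_i : \mathcal{D}^I \to M_k^i = M_k$ be the canonical coordinate projection; since $\mathcal{D}^I$ is a C*-algebra and each $\pi_i$ is a unital $*$-homomorphism, each $\pi_i$ is ucp. The first structural fact I would record is that, for any operator system $\mathcal{W}$ and any linear $\Phi : \mathcal{W} \to \mathcal{D}^I$, the map $\Phi$ is completely positive if and only if $\pi_i \circ \Phi$ is completely positive for every $i$. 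This holds because, at each matrix level, $M_n(\mathcal{D}^I) \cong \bigoplus_{i} M_{nk}$ and an element of this direct sum is positive exactly when each of its coordinates is positive; likewise $\Phi$ is unital if and only if each $\pi_i \circ \Phi$ is unital.

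Given this, suppose $\mathcal{V} \subseteq \widetilde{\mathcal{V}}$ is an inclusion of operator systems and $\varphi : \mathcal{V} \to \mathcal{D}^I$ is ucp. For each $i$ set $\varphi_i := \pi_i \circ \varphi : \mathcal{V} \to M_k$, which is ucp by the preceding remark. Since $M_k$ is an injective operator system, I would invoke Arveson's extension theorem to obtain, for each $i$, a ucp extension $\widetilde{\varphi_i} : \widetilde{\mathcal{V}} \to M_k$ with $\widetilde{\varphi_i}|_{\mathcal{V}} = \varphi_i$. I then define $\widetilde{\varphi} : \widetilde{\mathcal{V}} \to \prod_{i \in I} M_k$ by $\widetilde{\varphi}(v) = (\widetilde{\varphi_i}(v))_{i \in I}$ and verify three things: (a) $\widetilde{\varphi}$ actually takes values in the $\ell_\infty$-direct sum $\mathcal{D}^I$; (b) $\widetilde{\varphi}$ is ucp; and (c) $\widetilde{\varphi}$ extends $\varphi$. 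Claim (b) is immediate from the coordinatewise characterization of positivity and unitality above, since each $\widetilde{\varphi_i}$ is ucp, and claim (c) holds because $\pi_i \circ \widetilde{\varphi} = \widetilde{\varphi_i}$ agrees with $\varphi_i = \pi_i \circ \varphi$ on $\mathcal{V}$.

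The one point requiring care, and the main (if modest) obstacle, is claim (a): a priori the tuple $(\widetilde{\varphi_i}(v))_i$ lies only in the full product $\prod_i M_k$, so I must ensure its coordinates are uniformly bounded in order that it land in the $\ell_\infty$-direct sum. This is where I would use that every ucp map between operator systems is contractive for the order-unit norm, so that $\norm{\widetilde{\varphi_i}(v)} \leq \norm{v}$ for every $i$, and hence $\sup_{i} \norm{\widetilde{\varphi_i}(v)} \leq \norm{v} < \infty$. Thus $\widetilde{\varphi}(v) \in \mathcal{D}^I$, completing the argument. Alternatively, one could realize $\mathcal{D}^I$ as the range of the block-diagonal ucp idempotent on $B(\bigoplus_{i} \mathbb{C}^k)$ and invoke the fact that the range of a ucp projection on an injective operator system is injective; I find the coordinatewise argument above more self-contained, since it avoids explicitly identifying the relevant conditional expectation.
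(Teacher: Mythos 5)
Your proof is correct and follows essentially the same route as the paper: extend each coordinate map $\pi_i \circ \varphi$ via injectivity of $M_k$ and reassemble the extensions into a map into $\mathcal{D}^I$. The additional care you take in verifying that the reassembled map lands in the $\ell_\infty$-direct sum (via contractivity of ucp maps) is a detail the paper leaves implicit, but the argument is the same.
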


\begin{proof}
Suppose $\cal V \subseteq \wt{\cal V}$ is an inclusion of operator systems, and let $u: \cal V \to \mathcal D^I$ be a unital completely positive map. Let $u_i: \cal V \to M_k$ be the compression to the $i$\textsuperscript{th} block on the diagonal. The fact $u$ is unital completely positive implies $u_i$ is unital completely positive for each $i \in I$, and by injectivity of $M_k$ there exists a unital completely positive extension $\wt{u}_i: \wt{\cal V} \to M_k$ of $u_i.$ Then $\wt{u}:= \bigoplus_{i \in I} \wt{u}_i: \wt{\cal V} \to \mathcal D^I$ is the desired extension of $u$. 
\end{proof}

\begin{proposition} \label{prop: ell infinity is injective relative to k-AOU spaces and unital k-positive maps}
For any set $I$, the $k$-AOU space $\mathcal D^I$ is injective.
\end{proposition}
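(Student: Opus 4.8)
The plan is to mirror the proof of Lemma~\ref{rmk: ell infinity is injective relative to operator systems and cp maps} almost verbatim, replacing the appeal to injectivity of $M_k$ as an operator system with its injectivity as a $k$-AOU space (Lemma~\ref{lem: M_k is injective}). So suppose $\mathcal{V} \subseteq \widetilde{\mathcal{V}}$ is an inclusion of $k$-AOU spaces and let $u: \mathcal{V} \to \mathcal{D}^I$ be a unital $k$-positive map. For each $i \in I$ I would let $u_i: \mathcal{V} \to M_k$ denote the compression of $u$ to the $i$\textsuperscript{th} summand $M_k^i$. Since positivity in $M_k(\mathcal{D}^I)$ is tested summand by summand, the coordinate projection $\mathcal{D}^I \to M_k^i$ is unital and $k$-positive, and hence each $u_i$ is unital $k$-positive.

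By Lemma~\ref{lem: M_k is injective}, $M_k$ is injective in the category $k$-AOU, so for each $i$ there exists a unital $k$-positive extension $\widetilde{u}_i: \widetilde{\mathcal{V}} \to M_k$ with $\widetilde{u}_i|_{\mathcal{V}} = u_i$. I would then assemble these into a single map $\widetilde{u} := \bigoplus_{i \in I} \widetilde{u}_i$, given by $v \mapsto (\widetilde{u}_i(v))_{i \in I}$. That $\widetilde{u}$ restricts to $u$ on $\mathcal{V}$ is immediate coordinatewise, and unitality is clear since $\widetilde{u}_i(e) = I_k$ for every $i$. For $k$-positivity one again uses that the level-$k$ cone of $\mathcal{D}^I$ is the set of tuples that are positive in each summand: for $x$ in the level-$k$ cone of $\widetilde{\mathcal{V}}$ we have $\widetilde{u}_k(x) = ((\widetilde{u}_i)_k(x))_{i \in I}$, and each component lies in $M_{k^2}^+$ by $k$-positivity of $\widetilde{u}_i$, so $\widetilde{u}_k(x)$ is positive.

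The only point needing genuine care is that $\widetilde{u}$ actually takes values in the $\ell_\infty$-direct sum $\mathcal{D}^I$, rather than merely in the algebraic product $\prod_{i} M_k$. This follows because a unital $k$-positive map into $M_k$ is bounded by the order-unit norm of its argument in the AOU space $(M_k(\widetilde{\mathcal V}), C, I_k \otimes e)$, and that norm is intrinsic to $\widetilde{\mathcal{V}}$ and so independent of $i$; concretely $\|\widetilde{u}_i(v)\| = \|(\widetilde{u}_i)_k(I_k \otimes v)\| \le \|I_k \otimes v\|$ for every $i$, whence $\sup_{i}\|\widetilde{u}_i(v)\| < \infty$ for each $v$. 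With this uniform bound in hand, $\widetilde{u}$ is the desired unital $k$-positive extension, and no substantive obstacle remains beyond bookkeeping the blockwise description of the cone and the uniform norm estimate.
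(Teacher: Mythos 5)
Your proof is correct, but it takes a genuinely different route from the paper's. The paper reduces at once to the operator-system setting: by Lemma~\ref{lem: k-positive maps into ell infinity are cp on k-min} the map $u$ is unital completely positive as a map $\mathcal{V}_\text{$k$-min} \to \mathcal{D}^I$, by Corollary~\ref{cor: k-min is injective} the inclusion $\mathcal{V} \subseteq \widetilde{\mathcal{V}}$ upgrades to an inclusion of operator systems $\mathcal{V}_\text{$k$-min} \subseteq \widetilde{\mathcal{V}}_\text{$k$-min}$, and then the operator-system injectivity of $\mathcal{D}^I$ (Lemma~\ref{rmk: ell infinity is injective relative to operator systems and cp maps}) produces a ucp extension, which is in particular unital $k$-positive. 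You instead replay the componentwise argument of that lemma one categorical level down, extending each compression $u_i$ via the $k$-AOU injectivity of $M_k$ (Lemma~\ref{lem: M_k is injective}) and reassembling. Both arguments are valid. The paper's version is shorter because the work was front-loaded into the $k$-min machinery; yours is more self-contained at this step and, to its credit, explicitly confronts the one point the direct approach must address, namely that the tuple $(\widetilde{u}_i(v))_{i \in I}$ lands in the $\ell_\infty$-direct sum rather than merely the algebraic product. Your uniform order-unit-norm estimate $\sup_i \|\widetilde{u}_i(v)\| \le \|I_k \otimes v\|$ (valid as stated for hermitian $v$, with an extra factor of $2$ for general $v$ after splitting into real and imaginary parts) settles this; in the paper's route the analogous issue is absorbed by the contractivity of ucp maps inside the proof of Lemma~\ref{rmk: ell infinity is injective relative to operator systems and cp maps}.
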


\begin{proof}
Let $\cal V \subseteq \wt{\cal V}$ be an inclusion of $k$-AOU spaces and let $u: \cal V \to \mathcal D^I$ be a unital $k$-positive map. By Lemma~\ref{lem: k-positive maps into ell infinity are cp on k-min}  $u: \mathcal V_\text{$k$-min} \to \mathcal D^I$ is unital completely positive. By Corollary~\ref{cor: k-min is injective} and Lemma~\ref{rmk: ell infinity is injective relative to operator systems and cp maps}, there exists a unital completely positive extension $\wt{u}: \wt{\mathcal V}_\text{$k$-min} \to \mathcal D^I$ of $u$. Thus $\wt{u}: \wt{\mathcal V} \to \mathcal D^I$ is the desired extension of $u$.  \qedhere
\end{proof}

\begin{theorem}\label{thm: the injective envelope of k-min is also k-min}
Suppose that $\mathcal{V}$ is a $k$-minimal operator system. Then $I(\mathcal{V})$ is $k$-minimal.
\end{theorem}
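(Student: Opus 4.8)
The plan is to exhibit $I(\mathcal{V})$ as an operator subsystem of a space of the form $\mathcal{D}^I$, and then invoke Proposition~\ref{prop: k-minimal system if and only if subsystem of ell infinity} to conclude that it is $k$-minimal. The three ingredients I would assemble are: the characterization of $k$-minimal systems as complete-order-embeddable into some $\mathcal{D}^I$ (Proposition~\ref{prop: k-minimal system if and only if subsystem of ell infinity}), the injectivity of $\mathcal{D}^I$ as an operator system (Lemma~\ref{rmk: ell infinity is injective relative to operator systems and cp maps}), and the essentiality property (3) in the definition of the injective envelope.

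First, since $\mathcal{V}$ is $k$-minimal, Proposition~\ref{prop: k-minimal system if and only if subsystem of ell infinity} supplies a set $I$ together with a unital complete order embedding $\iota: \mathcal{V} \to \mathcal{D}^I$, so that $\mathcal{V}$ may be regarded as an operator subsystem of the \emph{injective} operator system $\mathcal{D}^I$. Let $\kappa: \mathcal{V} \to I(\mathcal{V})$ denote the canonical unital complete order embedding into the injective envelope. Because $\mathcal{D}^I$ is injective and $\kappa$ is a unital complete order embedding, the unital completely positive map $\iota \circ \kappa^{-1}: \kappa(\mathcal{V}) \to \mathcal{D}^I$ extends to a unital completely positive map $\Phi: I(\mathcal{V}) \to \mathcal{D}^I$ with $\Phi \circ \kappa = \iota$.

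Next, I would observe that the restriction of $\Phi$ to $\kappa(\mathcal{V})$ equals $\iota \circ \kappa^{-1}$, which is a composition of unital complete order embeddings and is therefore itself a unital complete order embedding. The essentiality of the injective envelope (property (3) in its definition, applied with $\mathcal{I} = I(\mathcal{V})$, $\mathcal{W} = \mathcal{D}^I$, and $\varphi = \Phi$) then forces $\Phi$ to be a unital complete order embedding on all of $I(\mathcal{V})$. Thus $\Phi: I(\mathcal{V}) \to \mathcal{D}^I$ is a complete order embedding, and a second application of Proposition~\ref{prop: k-minimal system if and only if subsystem of ell infinity}, this time to $I(\mathcal{V})$, shows that $I(\mathcal{V})$ is $k$-minimal.

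I expect the only delicate point to be the verification that the essentiality property genuinely applies, i.e.\ that the extension $\Phi$ restricts to a complete order embedding on the copy $\kappa(\mathcal{V})$; this is exactly where the hypothesis that $\iota$ (and hence $\iota \circ \kappa^{-1}$) is a complete order embedding, rather than merely completely positive, is used. Everything else is a direct chaining of previously established results, and no new estimates or constructions are required.
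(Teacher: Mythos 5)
Your proposal is correct and follows essentially the same argument as the paper: embed $\mathcal{V}$ into an injective $\mathcal{D}^I$ via Proposition~\ref{prop: k-minimal system if and only if subsystem of ell infinity}, extend to $I(\mathcal{V})$ by injectivity of $\mathcal{D}^I$, use essentiality to upgrade the extension to a complete order embedding, and apply the proposition again. The only difference is cosmetic: the paper identifies $\mathcal{V}$ with $\kappa(\mathcal{V})$ at the outset, whereas you carry $\kappa^{-1}$ explicitly.
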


\begin{proof}
By Proposition \ref{prop: k-minimal system if and only if subsystem of ell infinity}, there exists a unital complete order embedding $u: \mathcal{V} \to \mathcal{D}^I$ for some set $I$. By Lemma \ref{rmk: ell infinity is injective relative to operator systems and cp maps}, $\mathcal{D}^I$ is an injective operator system.

Letting $\kappa: \mathcal V \to I(\mathcal V)$ denote the natural complete order embedding into the injective envelope, we may identify $\mathcal V$ with $\kappa(\mathcal V)$ and therefore we will assume $\mathcal{V} \subseteq I(\mathcal{V})$. Since $\mathcal{D}^I$ is injective, there exists an extension $\widetilde{u}: I(\mathcal{V}) \to \mathcal{D}^I$ of $u$. Since $u$ is a unital complete order embedding, and since $I(\mathcal{V})$ is an essential extension of $\mathcal{V}$, $\widetilde{u}$ is a unital complete order embedding. It follows from Proposition \ref{prop: k-minimal system if and only if subsystem of ell infinity} that $I(\mathcal{V})$ is $k$-minimal.
\end{proof}

\begin{theorem}\label{thm: existence of injective envelope in k-AOU}
Suppose that $\mathcal{V}$ is a $k$-AOU space. Then there exists a unique injective envelope $I(\mathcal{V})$ in the category $k$-AOU. Moreover, $I(\mathcal{V})$ is $k$-order isomorphic to $I(\mathcal{V}_\text{$k$-min})$.
\end{theorem}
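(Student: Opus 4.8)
The plan is to reduce everything to the operator-system injective envelope $I(\mathcal{V}_\text{$k$-min})$ through the forgetful functor $F$ and the functor $(\cdot)_\text{$k$-min}$ of Remark~\ref{Xhabli-relationship-rem}. Set $\mathcal{I} := I(\mathcal{V}_\text{$k$-min})$; by Theorem~\ref{thm: the injective envelope of k-min is also k-min} this operator system is $k$-minimal, so $\text{OMIN}_k(\mathcal{I}) \cong \mathcal{I}$. I will show that the $k$-AOU space $F(\mathcal{I})$ obtained by keeping only the $k$th matrix cone, together with the Hamana embedding $\kappa$, is an injective envelope of $\mathcal{V}$ in the category $k$-AOU. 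Observe first that, by Proposition~\ref{prop: properties of V_k-min}, the defining cone of $\mathcal{V}$ equals $C_k^\text{$k$-min}$, so that $F(\mathcal{V}_\text{$k$-min}) = \mathcal{V}$ and likewise $F(\mathcal{I})$ recovers the $k$-AOU structure at the $k$th level. Since $\kappa: \mathcal{V}_\text{$k$-min} \to \mathcal{I}$ is a complete order embedding (Corollary~\ref{cor: k-min is injective}), its restriction at the $k$th matrix level shows $\kappa: \mathcal{V} \to F(\mathcal{I})$ is a unital $k$-order embedding, giving condition (2).

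For injectivity (condition (1)), suppose $\mathcal{X} \subseteq \widetilde{\mathcal{X}}$ is an inclusion of $k$-AOU spaces and $\varphi: \mathcal{X} \to F(\mathcal{I})$ is unital $k$-positive. By Proposition~\ref{prop: k-positive implies cp on k-min} and the commuting diagram of Remark~\ref{Xhabli-relationship-rem}, $\varphi$ upgrades to a unital completely positive map $\mathcal{X}_\text{$k$-min} \to F(\mathcal{I})_\text{$k$-min} = \text{OMIN}_k(\mathcal{I}) \cong \mathcal{I}$. Corollary~\ref{cor: k-min is injective} makes $\mathcal{X}_\text{$k$-min} \subseteq \widetilde{\mathcal{X}}_\text{$k$-min}$ a complete order inclusion, so injectivity of the operator system $\mathcal{I}$ produces a unital completely positive extension $\widetilde{\varphi}: \widetilde{\mathcal{X}}_\text{$k$-min} \to \mathcal{I}$. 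Restricting $\widetilde{\varphi}$ to the $k$th matrix level gives the desired unital $k$-positive extension $\widetilde{\mathcal{X}} \to F(\mathcal{I})$ of $\varphi$.

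The technical heart, which I expect to be the main obstacle, is a rigidity lemma bridging the $k$th level to all levels: \emph{if $\mathcal{S}$ is a $k$-minimal operator system and $\psi: \mathcal{S} \to \mathcal{W}'$ is unital completely positive with $\psi$ a $k$-order embedding at the $k$th matrix level, then $\psi$ is a complete order embedding.} The proof uses that $x \in C_n^\text{$k$-min}(\mathcal{S})$ is detected by the compressions $\alpha^* x \alpha \in \mathbfcal{C}_k(\mathcal{S})$ over all $\alpha \in M_{n,k}$, together with the identity $\psi_k(\alpha^* x \alpha) = \alpha^* \psi_n(x) \alpha$ and the level-$k$ embedding hypothesis, to recover membership at every level. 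Granting this lemma, condition (3) follows: a unital $k$-positive $\varphi: F(\mathcal{I}) \to \mathcal{W}$ restricting to a $k$-order embedding on $\kappa(\mathcal{V})$ becomes, after passing to $k$-min via Proposition~\ref{prop: k-positive implies cp on k-min}, a unital completely positive map $\mathcal{I} \to \mathcal{W}_\text{$k$-min}$ restricting to a complete order embedding on $\kappa(\mathcal{V}_\text{$k$-min})$; since $\mathcal{I}$ is an essential extension of $\mathcal{V}_\text{$k$-min}$, this map is a complete order embedding, which at the $k$th level witnesses that $\varphi$ is a $k$-order embedding.

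For uniqueness and the moreover clause, let $(\mathcal{J}, \lambda)$ be any injective envelope of $\mathcal{V}$ in $k$-AOU. I would show $\mathcal{J}_\text{$k$-min}$ is an operator-system injective envelope of $\mathcal{V}_\text{$k$-min}$: it is $k$-minimal, $\lambda: \mathcal{V}_\text{$k$-min} \to \mathcal{J}_\text{$k$-min}$ is a complete order embedding (Corollary~\ref{cor: k-min is injective}), it is operator-system injective, and it is an essential extension of $\mathcal{V}_\text{$k$-min}$. Injectivity is checked by extending a unital completely positive $\phi: \mathcal{A} \to \mathcal{J}_\text{$k$-min}$ (for $\mathcal{A} \subseteq \widetilde{\mathcal{A}}$ operator systems) first at the $k$th level using injectivity of $\mathcal{J}$, then re-upgrading via Proposition~\ref{prop: k-positive implies cp on k-min} to a completely positive map $\text{OMIN}_k(\widetilde{\mathcal{A}}) \to \mathcal{J}_\text{$k$-min}$, which composes with the unital completely positive $\text{id}: \widetilde{\mathcal{A}} \to \text{OMIN}_k(\widetilde{\mathcal{A}})$ of Theorem~\ref{thm: Xhabli's theorem} to yield the required extension on $\widetilde{\mathcal{A}}$. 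The essential-extension property for $\mathcal{J}_\text{$k$-min}$ follows from the bridging lemma and the essential-extension property of $\mathcal{J}$. By uniqueness of the operator-system injective envelope (Hamana), $\mathcal{J}_\text{$k$-min} \cong I(\mathcal{V}_\text{$k$-min}) = \mathcal{I}$, whence $\mathcal{J} = F(\mathcal{J}_\text{$k$-min}) \cong F(\mathcal{I})$ as $k$-AOU spaces. This establishes uniqueness and simultaneously shows $I(\mathcal{V})$ is $k$-order isomorphic to $I(\mathcal{V}_\text{$k$-min})$.
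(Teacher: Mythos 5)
Your proposal is correct, and for the existence half it follows the paper's argument almost verbatim: identify the candidate as $I(\mathcal{V}_\text{$k$-min})$, use Theorem~\ref{thm: the injective envelope of k-min is also k-min} plus Proposition~\ref{prop: k-positive implies cp on k-min} and Corollary~\ref{cor: k-min is injective} to transfer injectivity and essentiality between the two categories. Your ``bridging lemma'' is sound (the compression identity $\psi_k(\alpha^*x\alpha)=\alpha^*\psi_n(x)\alpha$ does exactly what you claim when the domain is $k$-minimal), but it is really just Corollary~\ref{cor: k-min is injective} in disguise, which is what the paper invokes directly at that point. Where you genuinely diverge is uniqueness: the paper runs the standard Hamana back-and-forth entirely inside the category $k$-AOU (extend $\kappa$ over a competing envelope $\mathcal{I}$ by injectivity, use essentiality to see the extension is a $k$-order embedding, extend its inverse, and use essentiality again to conclude the two maps are mutually inverse $k$-order isomorphisms), whereas you reduce to Hamana's operator-system uniqueness by proving that for any $k$-AOU injective envelope $\mathcal{J}$ of $\mathcal{V}$, the operator system $\mathcal{J}_\text{$k$-min}$ is an operator-system injective envelope of $\mathcal{V}_\text{$k$-min}$. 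Your route is valid but costs an extra verification --- that $\mathcal{J}_\text{$k$-min}$ is injective in $\text{OpSys}$, which you correctly obtain by extending at level $k$ and re-upgrading via Proposition~\ref{prop: k-positive implies cp on k-min} --- and in exchange it yields a slightly stronger structural statement (every $k$-AOU injective envelope is the image under $F$ of an operator-system injective envelope), while the paper's direct argument is shorter and stays self-contained in the $k$-AOU category.
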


\begin{proof}
Since $\mathcal{V}$ is $k$-order isomorphic to $\mathcal{V}_\text{$k$-min}$ and $\kappa: \mathcal{V}_\text{$k$-min} \to I(\mathcal{V}_\text{$k$-min})$ is a unital complete order embedding, it is clear that $\kappa: \mathcal{V} \to I(\mathcal{V}_\text{$k$-min})$ is $k$-order embedding of $k$-AOU spaces.

We first check that $I(\mathcal{V}_\text{$k$-min})$ is an injective $k$-AOU space. Suppose $\mathcal{W} \subseteq \widetilde{\mathcal{W}}$ is an inclusion of $k$-AOU spaces, and let $u: \mathcal{W} \to I(\mathcal{V}_\text{$k$-min})$ be a unital $k$-positive map. By Corollary \ref{cor: k-min is injective}, $\mathcal{W}_\text{$k$-min} \subseteq \widetilde{\mathcal{W}}_\text{$k$-min}$ is an inclusion of operator systems. Since $u: \mathcal{W}_\text{$k$-min} \to I(\mathcal{V}_\text{$k$-min})$ is unital k-positive, Theorem~\ref{thm: the injective envelope of k-min is also k-min} implies $I(\mathcal V_\text{$k$-min})$ is k-minimal and therefore $u: \mathcal W_\text{$k$-min} \to I(\mathcal V_\text{$k$-min})$ is completely positive by Proposition \ref{prop: k-positive implies cp on k-min}. Since $I(\mathcal{V}_\text{$k$-min})$ is an injective operator system, there exists a unital completely positive extension $\widetilde{u}: \widetilde{\mathcal{W}}_\text{$k$-min} \to I(\mathcal{V}_\text{$k$-min})$. Thus $\widetilde{u}: \widetilde{\mathcal{W}} \to I(\mathcal{V}_\text{$k$-min})$ is a unital $k$-positive extension of $u$, and $I(\mathcal{V}_\text{$k$-min})$ is injective.

We now check that $I(\mathcal{V}_\text{$k$-min})$ is an essential cover for $\mathcal{V}$ as a $k$-AOU space. Suppose $\mathcal{W}$ is a $k$-AOU space and $\varphi: I(\mathcal{V}_\text{$k$-min}) \to \mathcal{W}$ is a unital $k$-positive map whose restriction to $\mathcal{V}$ is a unital $k$-order embedding. Then $\varphi: I(\mathcal{V}_\text{$k$-min}) \to \mathcal{W}_\text{$k$-min}$ is a unital completely positive map whose restriction to $\mathcal{V}_\text{$k$-min}$ is a unital complete order embedding by Corollary~\ref{cor: k-min is injective}. Since $I(\mathcal{V}_\text{$k$-min})$ is an essential extension of $\mathcal{V}_\text{$k$-min}$, $\varphi: I(\mathcal{V}_\text{$k$-min}) \to \mathcal{W}_\text{$k$-min}$ is also a unital complete order embedding. It follows $\varphi: I(\mathcal{V}_\text{$k$-min}) \to \mathcal{W}$ is a unital $k$-order embedding. Therefore $I(\mathcal{V}_\text{$k$-min})$ is an essential injective cover for $\mathcal{V}$, and hence an injective envelope for $\mathcal{V}$.

Finally, we verify uniqueness. Suppose $(\mathcal{I}, \kappa')$ is another injective envelope for $\mathcal{V}$ in the category $k$-AOU. Without loss of generality, we may assume that $\kappa'$ is the inclusion map. By the injectivity of $I(\mathcal{V}_\text{$k$-min})$, the unital $k$-order embedding $\kappa: \mathcal{V} \to I(\mathcal{V}_\text{$k$-min})$ extends to a unital $k$-positive map $\widetilde{\kappa}: \mathcal{I} \to I(\mathcal{V}_\text{$k$-min})$. Since $\mathcal{I}$ is an essential extension of $\mathcal{V}$, $\widetilde{\kappa}$ is a unital $k$-order embedding as well. By the injectivity of $\mathcal{I}$, the unital $k$-order embedding $\widetilde{\kappa}^{-1}: \widetilde{\kappa}(\mathcal{I}) \to \mathcal{I}$ extends to a unital $k$-positive map $\rho: I(\mathcal{V}_\text{$k$-min}) \to \mathcal{I}$. Since $\rho$ extends the unital $k$-order embedding $\kappa^{-1}: \kappa(\mathcal{I}) \to \mathcal{I}$, $\rho$ is a unital $k$-order embedding by once again invoking the fact that $I(\mathcal V_\text{$k$-min})$ is essential. It follows that $\rho = \widetilde{\kappa}^{-1}$ and hence $\widetilde{\kappa}$ is a unital $k$-order isomorphism.  Therefore $I(\mathcal{V}_\text{$k$-min})$ is unique up to unital $k$-order isomorphism. 
\end{proof}

\begin{remark}
Let $\mathcal V$ be a $k$-AOU space. Following the standard terminology, a linear map $\vp: \mathcal D^I \to \mathcal D^I$ will be called a \emph{$\mathcal V$-map} if $\vp$ is $k$-positive and $\vp \vert_\mathcal V = \id_\mathcal V.$ A $\mathcal V$-map $\vp$ is called a \emph{$\mathcal V$-projection} if $\vp \circ \vp = \vp.$  If $\vp$ is a $\mathcal V$-map then the map $p: \mathcal D^I \to [0,\infty)$ defined as $p(x):= \norm{\vp(x)}$ is called a \emph{$\mathcal V$-seminorm}. We partially order $\mathcal V$-seminorms by declaring $p_\vp \preceq p_\psi$ if and only if $p_\vp(x) \leq p_\psi(x)$ for all $x \in \mathcal D^I.$ We partially order $\mathcal V$-projections by declaring $\vp \preceq \psi$ if and only $\vp \psi = \psi \vp = \vp.$ Thus, given a $k$-AOU space $\mathcal V$, and using results of this section, one may proceed in constructing the injective envelope of $\mathcal V$. This argument follows the standard construction for an operator system (see \cite{hamana1979injective}). One begins by showing that there exists a minimal $\mathcal V$-seminorm. This is proven by taking a net of $\mathcal V$-maps such that the induced $\mathcal V$-seminorms form a descending chain and showing such chain has a lower bound. Appealing to Zorn's lemma, the existence is established. The next step is showing if $\vp: \mathcal D^I \to \mathcal D^I$ is a $\mathcal V$-projection such that the induced seminorm $p_\vp$ is minimal then $\vp$ is a minimal $\mathcal V$-projection. One then shows $\vp(\mathcal D^I)$ is an injective envelope for the $k$-AOU space $\mathcal V.$ In order to prove injectivity, one first shows a $k$-AOU space is injective if and only if there exists a $k$-positive projection $P: \mathcal D^I \to \mathcal V$ for some set $I$.  
\end{remark}

We conclude with an observation concerning the C*-envelope of a $k$-minimal operator system. This observation will be crucial for our subsequent results. By a result of Choi and Effros \cite[Theorem 3.1]{choi1977injectivity}, every injective operator system is completely order isomorphic to a C*-algebra. In particular, $I(\mathcal{V})$ is completely order isomorphic to a C*-algebra for any operator system $\mathcal{V}$. It was shown by Hamana that the image of $\mathcal{V}$ in this C*-algebra generates the \textit{C*-envelope} $C^*_e(\mathcal{V})$. Hence, $C^*_e(\mathcal{V})$ is completely order isomorphic to a subsystem of $I(\mathcal{V})$.

\begin{corollary} \label{cor: C^*_e(V) k-minimal when V is k-minimal}
Suppose that $\mathcal{V}$ is a $k$-minimal operator system. Then $C^*_e(\mathcal{V})$ is a $k$-minimal operator system.
\end{corollary}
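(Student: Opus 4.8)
The plan is to reduce everything to the embedding characterization of $k$-minimality provided by Proposition~\ref{prop: k-minimal system if and only if subsystem of ell infinity}, namely that an operator system is $k$-minimal precisely when it admits a complete order embedding into $\mathcal{D}^I$ for some index set $I$. The essential content of the proof is then the observation, immediate from this characterization, that $k$-minimality is inherited by operator subsystems: if $\mathcal{W} \subseteq \mathcal{U}$ is an inclusion of operator systems and $\mathcal{U}$ is $k$-minimal, then composing the complete order embedding $\mathcal{U} \hookrightarrow \mathcal{D}^I$ with the inclusion $\mathcal{W} \hookrightarrow \mathcal{U}$ exhibits $\mathcal{W}$ as $k$-minimal as well.

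With this in hand, I would carry out the argument in three short steps. First, since $\mathcal{V}$ is $k$-minimal by hypothesis, Theorem~\ref{thm: the injective envelope of k-min is also k-min} shows that the injective envelope $I(\mathcal{V})$ is again $k$-minimal. Second, as recorded in the discussion preceding the statement, the Choi--Effros representation together with Hamana's theorem identify $C^*_e(\mathcal{V})$ with an operator subsystem of $I(\mathcal{V})$ (up to complete order isomorphism). Third, I would invoke the heredity observation above: applying Proposition~\ref{prop: k-minimal system if and only if subsystem of ell infinity} to the $k$-minimal system $I(\mathcal{V})$ produces a complete order embedding $i: I(\mathcal{V}) \to \mathcal{D}^I$, whose restriction to the subsystem $C^*_e(\mathcal{V})$ is again a complete order embedding into $\mathcal{D}^I$; a second application of Proposition~\ref{prop: k-minimal system if and only if subsystem of ell infinity} then yields that $C^*_e(\mathcal{V})$ is $k$-minimal.

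I do not anticipate a genuine obstacle here, since all the heavy lifting has already been done in Theorem~\ref{thm: the injective envelope of k-min is also k-min}. The only point requiring care is the claim that the restriction of a complete order embedding to a subsystem remains a complete order embedding. This is routine: an operator subsystem carries the matrix order inherited from the ambient system, so the restricted map is automatically unital, completely positive, and reflects positivity at every matrix level because the original map does. Thus the argument is essentially bookkeeping around the two cited propositions and the preceding structural remarks.
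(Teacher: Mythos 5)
Your proposal is correct and follows essentially the same route as the paper: the paper likewise uses Theorem~\ref{thm: the injective envelope of k-min is also k-min} to see that $I(\mathcal{V})$ is $k$-minimal, embeds it into $\mathcal{D}^I$ via Proposition~\ref{prop: k-minimal system if and only if subsystem of ell infinity}, restricts to the subsystem $C^*_e(\mathcal{V})$, and applies the proposition a second time. Your extra remarks on why restriction preserves complete order embeddings are just a more explicit version of what the paper leaves implicit.
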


\begin{proof}
By Proposition \ref{prop: k-minimal system if and only if subsystem of ell infinity}, there exists a unital complete order embedding $i: I(\mathcal{V}) \to \mathcal{D}^I$. Thus, there exists a unital complete order embedding $j: C^*_e(\mathcal{V}) \to \mathcal{D}^I$. By Proposition \ref{prop: k-minimal system if and only if subsystem of ell infinity} again, $C^*_e(\mathcal{V})$ is a $k$-minimal operator system. \qedhere
\end{proof}

\section{$k$-minimal C*-algebras}\label{sec: k minimal Cstar algebras}

\begin{definition}\label{defn: k minimal C star algebra}
Let $k \in \mathbb N$ and let $\mathcal A$ be a unital C*-algebra. We say $\mathcal A$ is \emph{$k$-minimal} if it is $k$-minimal as an operator system. 
\end{definition}

In the previous section, we observed that when $\mathcal{V}$ is a $k$-minimal C*-algebra, each of $\mathcal{V}$ and $C^*_e(\mathcal{V})$ admit a unital complete order embedding into a direct sum of the matrix algebra $M_k$. In this section, we will show that $k$-minimal C*-algebras can be faithfully embedded into a direct sum of matrix algebras, each with dimension no more than $k^2$. The result implies that whenever $\mathcal{V}$ is a $k$-minimal operator system, $C^*_e(\mathcal{V})$ embeds algebraically into a direct sum of matrix algebras. This will have important consequences for quantum correlations in the next section.

\begin{definition}
Given an Archimedean order unit space $(\mathcal V, C, e)$, a state $\vp: \mathcal V \to \mathbb C$ is called \emph{pure} if it is an extreme point in the set of states of $\mathcal V$. 
\end{definition}

The next lemma is likely known. We provide a brief proof for the sake of completeness.

\begin{lemma} \label{lem: Hahn-Banach pure states}
Let $\mathcal V \subseteq \mathcal W$ be an inclusion of AOU spaces and let $\vp: \mathcal V \to \mathbb{C}$ be a pure state. Then there exists a pure extension $\tilde{\varphi}: \mathcal W \to \mathbb{C}$.
\end{lemma}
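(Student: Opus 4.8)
The plan is to exhibit $\tilde{\varphi}$ as an extreme point of the set of all state extensions of $\varphi$, and then to promote extremality within that set to extremality within the full state space of $\mathcal{W}$ using the hypothesis that $\varphi$ is pure. Throughout I write $\mathcal{S}(\mathcal{W})$ for the state space of $\mathcal{W}$ (positive unital functionals), which is convex and weak-$*$ compact by Banach--Alaoglu, since $\mathcal{V}$ and $\mathcal{W}$ share the same Archimedean order unit $e$ and states are precisely the norm-one functionals with $\psi(e)=1$ in the order-unit norm.

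First I would establish that the collection
$$E_\varphi := \{ \psi : \mathcal{W} \to \mathbb{C} \mid \psi \text{ is a state with } \psi|_{\mathcal{V}} = \varphi \}$$
is nonempty. This is the Hahn--Banach extension theorem for states: since $\|\varphi\| = \varphi(e) = 1$ in the order-unit norm, the complex Hahn--Banach theorem produces a norm-preserving extension $\tilde{\varphi}$ of $\varphi$ to $\mathcal{W}$, and because $\tilde{\varphi}(e) = \varphi(e) = 1 = \|\tilde{\varphi}\|$, this extension is automatically a state. Next I would note that $E_\varphi$ is convex (a convex combination of extensions of $\varphi$ restricts to $\varphi$) and weak-$*$ closed, hence a weak-$*$ compact subset of $\mathcal{S}(\mathcal{W})$. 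By the Krein--Milman theorem, $E_\varphi$ has an extreme point, which I take to be my candidate $\tilde{\varphi}$.

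It then remains to verify that this $\tilde{\varphi}$ is pure as a state on $\mathcal{W}$, i.e. extreme in $\mathcal{S}(\mathcal{W})$. Suppose $\tilde{\varphi} = \tfrac{1}{2}(\psi_1 + \psi_2)$ with $\psi_1, \psi_2 \in \mathcal{S}(\mathcal{W})$. Restricting to $\mathcal{V}$ gives $\varphi = \tfrac{1}{2}(\psi_1|_{\mathcal{V}} + \psi_2|_{\mathcal{V}})$, a convex decomposition of $\varphi$ into states on $\mathcal{V}$. Since $\varphi$ is pure, $\psi_1|_{\mathcal{V}} = \psi_2|_{\mathcal{V}} = \varphi$, so in fact $\psi_1, \psi_2 \in E_\varphi$. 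But $\tilde{\varphi}$ is extreme in $E_\varphi$, whence $\psi_1 = \psi_2 = \tilde{\varphi}$. Thus $\tilde{\varphi}$ is extreme in $\mathcal{S}(\mathcal{W})$, i.e. pure, and it is by construction an extension of $\varphi$.

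I expect the only step requiring genuine external input to be the nonemptiness of $E_\varphi$ (the state-extension theorem); everything after it is a soft compactness-plus-extremality argument, and the purity transfer in the last paragraph is routine. If one prefers to avoid quoting Hahn--Banach for states as a black box, nonemptiness can be proved directly by one-dimensional extension: for a hermitian $w \in \mathcal{W}_h \setminus \mathcal{V}$ one assigns $\tilde{\varphi}(w)$ any value lying between $\sup\{\varphi(v) : v \in \mathcal{V}_h,\ v \le w\}$ and $\inf\{\varphi(v) : v \in \mathcal{V}_h,\ w \le v\}$, checks that positivity is preserved on $\mathcal{V} + \mathbb{C}w$, and then appeals to Zorn's lemma; but invoking the established extension theorem is cleaner.
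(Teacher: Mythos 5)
Your proposal is correct and follows essentially the same route as the paper: both take the weak-$*$ compact convex set of state extensions, apply Krein--Milman to get an extreme point, and then use purity of $\varphi$ to show that set is a face of the state space of $\mathcal{W}$, so its extreme points are pure. The only difference is that you spell out the nonemptiness via the Hahn--Banach state-extension theorem, which the paper leaves implicit.
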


\begin{proof}
Let $E$ denote the set of all extensions of $\vp$ to $\mathcal W$. Then $E$ is a non-empty convex subset of the state space of $\mathcal W$. Moreover, $E$ is closed in the weak-$*$ topology, since any weak-$*$ limit of functionals in $E$ must converge to an extension of $\vp$. By the Krein-Milman theorem, $E$ is the convex hull of its extreme points. It remains to check that an extreme point of $E$ is a pure state on $\mathcal W$. Indeed, suppose that $\psi \in E$ and $\psi = t \psi_1 + (1-t) \psi_2$ where $\psi_1$ and $\psi_2$ are states on $\mathcal W$. Since $\varphi$ is pure and since $\varphi = t \psi_1 |_{\mathcal V} + (1-t) \psi_2|_{\mathcal V}$, it must be that $\psi_1|_{\mathcal V} = \psi_2|_{\mathcal V} = \varphi$. So $E$ is a face of the state space of $\mathcal W$. In particular, the extreme points of $E$ are pure states for $\mathcal W$. 
\end{proof}

In the following, we let $C(X,\mathcal{A})$ denote the C*-algebra of continuous functions from a compact Hausdorff space $X$ to a unital C*-algebra $\mathcal{A}$ with the multiplication, norm, and adjoint defined pointwise on $X$.

\begin{lemma} \label{lem: pure states on C(X,A)}
Let $X$ be a compact Hausdorff space and let $\mathcal{A}$ be a unital C*-algebra. Suppose that $\rho: C(X, \mathcal{A}) \to \mathbb{C}$ is a pure state. Then there exists a pure state $\vp: \mathcal{A} \to \mathbb{C}$ and a point $x_0 \in X$ such that for all $f \in C(X, \mathcal{A})$, $\rho(f) = \vp(f(x_0))$.
\end{lemma}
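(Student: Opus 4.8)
The plan is to exploit the central copy of $C(X)$ sitting inside $C(X,\mathcal{A})$ together with the fact that pure states are multiplicative on central subalgebras. First I would note that $C(X,\mathcal{A})$ is a unital C*-algebra and that the map $g \mapsto g \cdot 1_{\mathcal A}$, sending $g \in C(X)$ to the function $x \mapsto g(x) 1_{\mathcal A}$, is a unital $*$-embedding of $C(X)$ onto a central $*$-subalgebra $Z \subseteq C(X,\mathcal{A})$: since $g(x)$ is a scalar, $g \cdot 1_{\mathcal A}$ commutes with every $f \in C(X,\mathcal{A})$.

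Passing to the GNS representation $(\pi_\rho, H, \xi)$ of the pure state $\rho$ (with $\xi$ the unit cyclic vector), purity means $\pi_\rho$ is irreducible, so $\pi_\rho(C(X,\mathcal{A}))' = \C I$ by Schur's lemma. Because $Z$ is central, $\pi_\rho(Z) \subseteq \pi_\rho(C(X,\mathcal{A}))' = \C I$, so for each $g \in C(X)$ we may write $\pi_\rho(g \cdot 1_{\mathcal A}) = \chi(g) I$ for a scalar $\chi(g)$. Since $\pi_\rho$ is a unital $*$-homomorphism, $\chi$ is a unital character of $C(X)$, and $\rho(g \cdot 1_{\mathcal A}) = \langle \pi_\rho(g\cdot 1_{\mathcal A})\xi, \xi\rangle = \chi(g)$. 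By Gelfand theory every character of $C(X)$ is evaluation at a point, so there exists $x_0 \in X$ with $\rho(g \cdot 1_{\mathcal A}) = g(x_0)$ for all $g$.

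The heart of the argument is to show that $\rho$ annihilates the ideal $I_{x_0} := \{f \in C(X,\mathcal{A}) : f(x_0) = 0\}$, which is the kernel of the surjective unital $*$-homomorphism $\mathrm{ev}_{x_0}: C(X,\mathcal{A}) \to \mathcal{A}$, $f \mapsto f(x_0)$. I would prove this by a direct estimate. Given $f \in I_{x_0}$ and $\epsilon > 0$, continuity gives a neighborhood of $x_0$ on which $\norm{f(x)} < \epsilon$; by Urysohn's lemma choose $g \in C(X)$ with $0 \le g \le 1$, $g(x_0) = 1$, and $g$ supported in that neighborhood, so that $\norm{(g \cdot 1_{\mathcal A}) f} \le \epsilon$. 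Set $z := (1 - g)\cdot 1_{\mathcal A}$, a central positive element with $\rho(z) = 1 - g(x_0) = 0$. The Cauchy--Schwarz inequality for $\rho$ gives $|\rho(z^{1/2} a)|^2 \le \rho(z)\rho(a^* a) = 0$ for every $a$; applying this with $a = z^{1/2} f$ and using $z = z^{1/2} z^{1/2}$ yields $\rho(z f) = 0$. Hence $|\rho(f)| = |\rho((g\cdot 1_{\mathcal A}) f)| \le \epsilon$, and letting $\epsilon \to 0$ gives $\rho(f) = 0$.

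It follows that $\rho$ factors as $\rho = \vp \circ \mathrm{ev}_{x_0}$ for a well-defined linear functional $\vp$ on $\mathcal{A} \cong C(X,\mathcal{A})/I_{x_0}$; explicitly $\vp(a) = \rho(\hat a)$, where $\hat a$ is the constant function $x \mapsto a$. Since $\mathrm{ev}_{x_0}$ is unital and positive, $\vp$ is a state and $\rho(f) = \vp(f(x_0))$ for all $f$, as required. Finally, to see that $\vp$ is pure, suppose $\vp = t\vp_1 + (1-t)\vp_2$ with $\vp_1, \vp_2$ states on $\mathcal{A}$ and $t \in (0,1)$. Then $\rho = t(\vp_1 \circ \mathrm{ev}_{x_0}) + (1-t)(\vp_2 \circ \mathrm{ev}_{x_0})$ is a convex combination of states on $C(X,\mathcal{A})$, so purity of $\rho$ forces $\vp_i \circ \mathrm{ev}_{x_0} = \rho$ for $i = 1,2$; surjectivity of $\mathrm{ev}_{x_0}$ then gives $\vp_1 = \vp_2 = \vp$. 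I expect the third step to be the main obstacle: the existence of $x_0$ only pins down $\rho$ on the central scalars, and it is the Urysohn-plus-Cauchy--Schwarz estimate that upgrades this to vanishing on the full ideal $I_{x_0}$, after which the factorization and purity arguments are formal.
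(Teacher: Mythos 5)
Your proof is correct, but it takes a genuinely different route from the paper's. The paper reduces to the abelian case: it order-embeds $C(X,\mathcal{A})$ into the commutative C*-algebra $C(X\times S)$ (where $S$ is the state space of $\mathcal{A}$) via $f \mapsto \bigl((x,\varphi)\mapsto \varphi(f(x))\bigr)$, extends $\rho$ to a pure state on $C(X\times S)$ using its Lemma~\ref{lem: Hahn-Banach pure states} (a Krein--Milman argument showing the extensions of a pure state form a face), and then reads off the point $(x_0,\varphi)$ from the fact that pure states on a commutative C*-algebra are point evaluations. You instead work entirely inside $C(X,\mathcal{A})$: irreducibility of the GNS representation forces the central copy of $C(X)$ to act by scalars, producing the character $g\mapsto g(x_0)$; the Urysohn-plus-Cauchy--Schwarz estimate then upgrades this to $\rho(I_{x_0})=0$, so $\rho$ factors through $\mathrm{ev}_{x_0}$, and purity of $\varphi$ follows from surjectivity of $\mathrm{ev}_{x_0}$. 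Each step checks out (in particular the estimate $|\rho(f)|=|\rho((g\cdot 1_{\mathcal{A}})f)|\le\epsilon$ is valid since $\rho(zf)=0$ and $z+g\cdot 1_{\mathcal{A}}=1$). What the two approaches buy: the paper's argument is order-theoretic, needs no representation theory, and reuses its pure-state extension lemma, which fits the AOU-space theme of the paper; yours is the classical C*-algebraic argument that irreducible representations of $C(X,\mathcal{A})\cong C(X)\otimes\mathcal{A}$ localize at a fiber, is self-contained modulo GNS theory, and identifies the point $x_0$ and the ideal structure explicitly rather than via a choice of extreme-point extension.
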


\begin{proof}
Let $S$ denote the state space of $\mathcal A$, which is necessarily a weak-$*$ compact Hausdorff space. Let $C(X \times S)$ denote the abelian C*-algebra of continuous functions on the Cartesian product of the sets $X$ and $S$ with the product topology. Observe that the mapping $i: C(X, \mathcal{A}) \to C(X \times S)$ given by
\[ i(f)(x,\varphi) = \varphi(f(x)) \]
is a unital order embedding of AOU spaces. Identifying $C(X,\mathcal{A})$ with its image under $i$, by Lemma~\ref{lem: Hahn-Banach pure states} we may extend $\rho$ to a pure state, $\tilde{\rho}$, on $C(X \times \mathcal{S})$. Since $\tilde{\rho}$ is pure, there exists a point $(x_0, \vp) \in X \times \mathcal{S}$ such that $\tilde{\rho}(g)=g(x_0,\vp)$ for all $g \in C(X \times \mathcal{S})$. Consequently, $\rho(f)=\vp(f(x_0))$ for all $f \in C(X,\mathcal{A})$. Finally, since $\rho$ is pure, $\vp$ must be pure. Otherwise, if $\vp = t \vp_1 + (1-t)\vp_2$, then we could write $\rho = t\rho_1 + (1-t)\rho_2$ where $\rho_i(f) = \vp_i(f(x_0))$ for each $i \in \{1,2\}$.
\end{proof}

\begin{lemma} \label{lem: pure states k-minimal}
Let $k \in \mathbb{N}$ and suppose $\mathcal{A}$ is a $k$-minimal C*-algebra. Let $\rho: \mathcal{A} \to \mathbb{C}$ be a pure state. If $\pi_{\rho}: \mathcal{A} \to B(H)$ is the corresponding GNS representation, then $\dim(H) \leq k$.
\end{lemma}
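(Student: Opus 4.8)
The plan is to reduce the statement to the fact that a unital C*-algebra which complete order embeds into $C(X,M_k)$ is \emph{$k$-subhomogeneous}, i.e.\ all of its irreducible representations have dimension at most $k$. Since $\rho$ is pure, its GNS representation $\pi_\rho$ is irreducible, so the claim is precisely that this irreducible representation has dimension at most $k$. First I would invoke $k$-minimality to obtain a concrete embedding: by \cite[Theorem 3.1]{xhabli2012super} (as used in the proof of Proposition~\ref{prop: k-minimal system if and only if subsystem of ell infinity}), there is a unital complete order embedding $\iota:\mathcal{A}\to M_k(C(X)) = C(X,M_k)$, where $X = \mathcal{S}_k(\mathcal{A})$ is compact Hausdorff in the weak-$*$ topology.

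The difficulty is that $\iota$ is only a complete order embedding and need not be multiplicative, so representation-theoretic (multiplicative) data cannot be transported through $\iota$ directly. To circumvent this, I would pass to the C*-algebra $\mathcal{B} := C^*(\iota(\mathcal{A}))\subseteq C(X,M_k)$ generated by the image. Since $\mathcal{A}$ is a unital C*-algebra it is its own C*-envelope, i.e.\ $C^*_e(\mathcal{A}) = \mathcal{A}$ with $\kappa = \id$; applying the universal property of the C*-envelope to the unital complete order embedding $\iota$ then yields a unital $*$-epimorphism $\pi:\mathcal{B}\to\mathcal{A}$ with $\pi\circ\iota = \id$. Thus $\mathcal{A}$ is realized as a quotient of $\mathcal{B}$.

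Next I would transfer the problem to $\mathcal{B}$. The functional $\rho\circ\pi$ is a pure state on $\mathcal{B}$ (pure states pull back along a $*$-epimorphism), and since $\pi$ is surjective its GNS representation is unitarily equivalent to $\pi_\rho$; in particular $\dim H_{\rho\circ\pi} = \dim H$. It therefore suffices to bound the GNS dimension of a pure state on the C*-subalgebra $\mathcal{B}\subseteq C(X,M_k)$. Here the two preceding lemmas enter: I would extend $\rho\circ\pi$ to a pure state $\tilde{\rho}$ on $C(X,M_k)$ using Lemma~\ref{lem: Hahn-Banach pure states}, and apply Lemma~\ref{lem: pure states on C(X,A)} with the coefficient algebra equal to $M_k$ to write $\tilde{\rho}(f) = \langle f(x_0)\xi,\xi\rangle$ for some unit vector $\xi\in\mathbb{C}^k$ and point $x_0\in X$. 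The $*$-representation $f\mapsto f(x_0)$ of $C(X,M_k)$ on $\mathbb{C}^k$ realizes $\tilde{\rho}$ as a vector state, so its GNS space embeds in $\mathbb{C}^k$ and has dimension at most $k$. Because $\mathcal{B}$ is a genuine C*-subalgebra, the GNS representation of $\rho\circ\pi = \tilde{\rho}|_{\mathcal{B}}$ is the subrepresentation of $\pi_{\tilde{\rho}}$ on the cyclic subspace $\overline{\pi_{\tilde{\rho}}(\mathcal{B})\Omega}$, whose dimension is at most $\dim H_{\tilde{\rho}}\leq k$. Combining the identifications gives $\dim H = \dim H_{\rho\circ\pi}\leq k$.

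The main obstacle is exactly the non-multiplicativity of $\iota$: one cannot simply restrict a representation of $C(X,M_k)$ along $\iota$ to extract an irreducible representation of $\mathcal{A}$, and indeed the factorization $\rho = \langle \iota(\cdot)(x_0)\xi,\xi\rangle$ through a unital completely positive map into $M_k$ is by itself too weak to force $\dim H\le k$ (a vector state on $M_{k+1}$ factors through a state into $M_1$ yet has a $(k+1)$-dimensional GNS space). The device that resolves this is the passage to $\mathcal{B} = C^*(\iota(\mathcal{A}))$ together with the identification $C^*_e(\mathcal{A}) = \mathcal{A}$, which exhibits $\mathcal{A}$ as a quotient of the honestly $k$-subhomogeneous C*-algebra $\mathcal{B}$; all representation theory is then carried out inside genuine C*-algebras, where restriction to a subalgebra yields a subrepresentation and pullback along a quotient preserves irreducibility and dimension.
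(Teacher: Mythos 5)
Your proposal is correct, and while it follows the paper's skeleton for most of its length --- embed $\mathcal{A}$ into $C(X,M_k)$ via $k$-minimality, extend the pure state using Lemma~\ref{lem: Hahn-Banach pure states}, and identify the extension as $f\mapsto\langle f(x_0)\xi,\xi\rangle$ using Lemma~\ref{lem: pure states on C(X,A)} --- it genuinely diverges at the final step, where the dimension bound is actually extracted. The paper concludes directly from the factorization $\rho(a)=\langle\varphi(a)h,h\rangle$ (with $\varphi$ merely unital completely positive into $M_k$) that $H$ is a quotient of $i(\mathcal{A})h\subseteq\mathbb{C}^k$; your objection to this kind of shortcut is well taken, since the Kadison--Schwarz inequality gives $\rho(a^*a)\geq\|\varphi(a)h\|^2$ and hence only the inclusion $N_\rho\subseteq\{a:\varphi(a)h=0\}$, which makes $i(\mathcal{A})h$ a quotient of $H$ rather than the reverse, and your $M_{k+1}$ example (a vector state factoring through a ucp map into $M_1$ yet having a $(k+1)$-dimensional GNS space) shows the factorization alone cannot suffice. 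Your repair --- passing to $\mathcal{B}=C^*(\iota(\mathcal{A}))$, invoking $C^*_e(\mathcal{A})=\mathcal{A}$ to produce a $*$-epimorphism $\pi:\mathcal{B}\to\mathcal{A}$ with $\pi\circ\iota=\operatorname{id}$, and then carrying out the GNS bookkeeping entirely with $*$-homomorphisms (purity and GNS dimension are preserved under pullback along a quotient map; the GNS space of a restricted state is the cyclic subspace $\overline{\pi_{\tilde{\rho}}(\mathcal{B})\Omega}$ of the GNS space of its extension) --- is sound at every step. What the paper's route would buy, if completed, is brevity and independence from the C*-envelope; what your route buys is a fully justified last step, at the modest cost of the standard fact that a unital C*-algebra is its own C*-envelope.
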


\begin{proof}
Let $X$ denote the set of all $k$-states of $\mathcal A$; i.e., linear maps $\psi: \mathcal{A} \to M_k$ that are unital completely positive. Then $X$ is a weak-$*$ compact Hausdorff space. Since $\mathcal{A}$ is $k$-minimal, the mapping $i: \mathcal{A} \to C(X,M_k)$ given by $a \mapsto \hat{a}$ is a unital complete order embedding, where $\hat{a}(\vp) := \vp(a)$ for every $\vp \in X$. Note since each $\vp \in X$ is unital it follows $\hat{1}_{\mathcal A}(\vp) = \vp(1_{\mathcal A}) = I_k$. Injectivity of $i$ will follow from $i^{-1}: i(\mathcal A) \to \mathcal A$ being completely positive (see below), along with the fact that the cones of $\mathcal A$ are proper.
Thus, the mapping $i: \mathcal{A} \to C(X,M_k)$ is a unital linear embedding. Since $\mathcal A$ is $k$-minimal, Proposition \ref{prop: k-min and OMIN_k} implies $a \in M_n(\mathcal A)^+$ if and only if $\vp_n(a) \in M_{kn}^+$ for every unital $k$-positive (and thus completely positive) $\vp: \mathcal A \to M_k$. It is then immediate that both $i: \mathcal{A} \to C(X,M_k)$ and $i^{-1}: i(\mathcal A) \to \mathcal A$ are completely positive. This proves the map $i$ is a unital complete order embedding.

Let $\rho$ be a pure state on $\mathcal{A}$. Then $\rho \circ i^{-1}: i(\mathcal A) \to \mathbb C$ is a pure state. By Lemma \ref{lem: Hahn-Banach pure states}, $\rho \circ i^{-1}$ extends to a pure state $\tilde{\rho}:C(X,M_k) \to \mathbb C$. By Lemma \ref{lem: pure states on C(X,A)}, there exists a point $\varphi \in X$ and a vector $h \in \mathbb{C}^k$ such that for all $f \in C(X,M_k)$ we have $\tilde{\rho}(f) = \langle f(\varphi)h, h \rangle$, since the pure states on $M_k$ are vector states. It follows that $\rho(a) = \langle \varphi(a)h,h \rangle$ for some $\varphi \in X$. Hence, the Hilbert space $H$ in the GNS representation $\pi_{\rho}: \mathcal{A} \to B(H)$ is unitarily equivalent to a quotient of the Hilbert space $i(\mathcal{A})h \subseteq \mathbb{C}^k$ equipped with the inner product $\langle i(a)h, i(b)h \rangle_{i(\mathcal{A})h} := \langle i(b^*a)h, h \rangle$.  Thus  $\dim(H) \leq k$.
\end{proof}

We are now able to prove that if a C*-algebra is $k$-minimal (as an operator system), then it embeds into an $\ell_\infty$-direct sum of matrix algebras of size less than or equal to $k$. 

\begin{theorem}\label{thm: k-minimal C*algebra is a subalgebra of ell infinity of matrix algebras}
Let $k \in \mathbb{N}$ and suppose $\mathcal{A}$ is a $k$-minimal C*-algebra. Then there exists a set $I$ and a faithful unital $*$-homomorphism $\pi: \mathcal{A} \to \bigoplus_{i \in I} M_{d_i}$ where $d_i \leq k$ for each $i \in I$.
\end{theorem}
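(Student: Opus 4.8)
The plan is to build the faithful $*$-homomorphism out of GNS representations associated to pure states. Since $\mathcal{A}$ is a $k$-minimal C*-algebra, Lemma~\ref{lem: pure states k-minimal} tells us that for every pure state $\rho$ on $\mathcal{A}$, the corresponding GNS representation $\pi_\rho : \mathcal{A} \to B(H_\rho)$ acts on a Hilbert space with $\dim(H_\rho) \leq k$. Thus each $\pi_\rho$ may be regarded as a unital $*$-homomorphism into $M_{d_\rho}$ with $d_\rho = \dim(H_\rho) \leq k$. Letting $I$ denote the set of (equivalence classes of) pure states on $\mathcal{A}$, I would define $\pi := \bigoplus_{\rho \in I} \pi_\rho : \mathcal{A} \to \bigoplus_{\rho \in I} M_{d_\rho}$, which is automatically a unital $*$-homomorphism into an $\ell_\infty$-direct sum of matrix algebras of size at most $k$.

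The remaining task is to verify that $\pi$ is faithful, i.e. injective. First I would observe that $\pi$ is automatically isometric once it is injective, since an injective $*$-homomorphism between C*-algebras is automatically isometric; so it suffices to prove injectivity, and for that it is enough to show that $\pi(a) = 0$ implies $a = 0$. Suppose $a \in \mathcal{A}$ with $\pi(a) = 0$. Then $\pi_\rho(a) = 0$, and in particular $\rho(a^*a) = \langle \pi_\rho(a)\xi_\rho, \pi_\rho(a)\xi_\rho\rangle = 0$ for every pure state $\rho$, where $\xi_\rho$ is the cyclic GNS vector. Hence the positive element $a^*a \in \mathcal{A}$ is annihilated by every pure state.

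The key step is then to invoke the standard fact that the pure states on a unital C*-algebra separate the points, or more precisely that they norm positive elements. Concretely, for a positive element $b = a^*a \geq 0$ in a unital C*-algebra one has $\norm{b} = \sup\{\rho(b) : \rho \text{ a state}\}$, and since the state space is the weak-$*$ closed convex hull of its pure states by the Krein–Milman theorem, this supremum is attained (as a supremum) over the pure states. Therefore $\rho(a^*a) = 0$ for all pure states $\rho$ forces $\norm{a^*a} = 0$, whence $a^*a = 0$ and $a = 0$. This establishes that $\pi$ is injective and completes the proof.

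I expect the main conceptual obstacle to be purely bookkeeping rather than mathematical: Lemma~\ref{lem: pure states k-minimal} already delivers the crucial dimension bound $\dim(H_\rho) \leq k$, which is the heart of the matter, so the only substantive work left is the routine verification that pure states separate points of $\mathcal{A}$. One minor technical point to handle carefully is that distinct pure states can give rise to unitarily equivalent GNS representations, so the index set $I$ should be taken as the set of pure states themselves (not equivalence classes) to be safe; this introduces redundancy in the direct sum but does no harm, since a redundant direct sum of faithful-on-each-positive-element representations remains faithful. With that caveat, the argument goes through cleanly.
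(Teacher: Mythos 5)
Your proposal is correct and follows essentially the same route as the paper, which simply takes the direct sum of the GNS representations over all pure states, invokes Lemma~\ref{lem: pure states k-minimal} for the dimension bound, and notes that this direct sum is faithful. The only difference is that you spell out the standard faithfulness argument (pure states norming positive elements via Krein--Milman) that the paper leaves as an ``observation.''
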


\begin{proof}
This follows from Lemma~\ref{lem: pure states k-minimal} and the observation that the direct sum of the GNS representations \[ \bigoplus_{ \rho \in \mathcal{P}} \pi_{\rho}: \mathcal A \to B(H), \quad H:= \bigoplus_{\rho \in \mathcal P}H_\rho, \quad \dim H_\rho \leq k, \] 
where $\mathcal{P}$ is the set of pure states on $\mathcal{A}$, 
is a faithful unital $*$-homomorphism.
\end{proof}

\begin{remark}
Theorem \ref{thm: k-minimal C*algebra is a subalgebra of ell infinity of matrix algebras}, together with Corollary \ref{cor: C^*_e(V) k-minimal when V is k-minimal}, imply that every $k$-AOU space $\mathcal{V}$ has a natural C*-envelope in the category $k$-AOU, which we identify with the $k$-AOU space $C^*_e(V_\text{$k$-min})$. By natural, we mean that the C*-envelope of the $k$-AOU space $\mathcal{V}$ is not only a subspace of a direct sum of matrix algebras $M_{d_i}$ with $d_i \leq k$, but is in fact a C*-subalgebra of such a direct sum, up to $*$-isomorphism.
\end{remark}

\section{Projections in $k$-AOU spaces}\label{sec: projections in k AOU spaces}

In this section, we will characterize when a positive contraction $p$ in a $k$-AOU space $\mathcal{V}$ is a projection. In the language of \cite{araiza2020abstract}, we seek to determine when $p$ is an abstract projection in the operator system $\mathcal{V}_\text{$k$-min}$ using only the data of the $k$-AOU space, namely the triple $(\mathcal{V}, C, e)$. We begin by reviewing the notion of an abstract projection in an operator system from \cite{araiza2020abstract}.

Suppose that $(\mathcal{V}, \mathbfcal{C}, e)$ is an operator system, and suppose that $p \in \mathcal{V}$ is a positive contraction. It follows that $p^\perp = e - p$ is also a positive contraction. For each $n \in \mathbb{N}$, we define a set
$\mathbfcal{C}(p)_n$ to be the set of $x \in M_{2n}(\mathcal{V})$ with the property that $x=x^*$ and for every $\epsilon > 0$, there exists $t > 0$ such that 
\[ x  + \epsilon I_n \otimes p \oplus p^\perp + t I_n \otimes p^\perp \oplus p \in \mathbfcal{C}_{2n}. \]
In general, each set $\mathbfcal{C}(p)_n$ is a cone, but not a proper cone. If we set $\mathcal{J} = \Span \{ \mathbfcal{C}(p)_1 \cap - \mathbfcal{C}(p)_1\}$, then the quotient $M_2(\mathcal{V}) / \mathcal{J}$ is an operator system when equipped with the matrix ordering $\{\mathbfcal{C}(p)_n + M_n(\mathcal{J}) \}_{n=1}^\infty$ and the order unit $e \otimes J_2 + \mathcal{J}$, where
\[ J_2 = \begin{pmatrix} 1 & 1 \\ 1 & 1 \end{pmatrix}. \] 
(See Theorem 4.9 of \cite{araiza2020abstract} for details.) Moreover, the mapping $\pi_p: \mathcal{V} \to M_2(\mathcal{V})/\mathcal{J}$ defined by
\[ \pi_p(x) = x \otimes J_2 + \mathcal{J} \]
is unital and completely positive (Proposition 3.2 of \cite{araiza2021universal}). The following theorem shows that $\pi_p$ can be used to characterize when $p$ is a projection in $\mathcal{V}$.

\begin{theorem}[Theorems 5.3, 5.7, 5.8 of \cite{araiza2020abstract}] \label{thm: abstract projections in operator systems}
Let $\mathcal{V}$ be an operator system, and suppose that $p \in \mathcal{V}$ is a positive contraction. Then the following statements are equivalent:
\begin{enumerate}
    \item There exists a Hilbert space $H$ and a complete order embedding $\varphi: \mathcal{V} \to B(H)$ such that $\varphi(p)$ is a projection on $H$.
    \item $p$ is a projection in $C^*_e(\mathcal{V})$.
    \item $\pi_p$ is a complete order embedding.
\end{enumerate}
\end{theorem}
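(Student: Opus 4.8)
The plan is to establish the cycle $(1) \Rightarrow (2) \Rightarrow (3) \Rightarrow (1)$, disposing of the two ``concrete'' implications $(1) \Leftrightarrow (2)$ by standard C*-envelope theory and reserving the real work for the passage through the abstract condition $(3)$. Throughout I would use that $\pi_p$ is already known to be unital completely positive, so that the content of $(3)$ is solely that $\pi_p$ reflects positivity at every matrix level.

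First I would settle $(1) \Leftrightarrow (2)$. For $(1) \Rightarrow (2)$, given a complete order embedding $\varphi : \mathcal{V} \to B(H)$ with $\varphi(p)$ a projection, the C*-algebra $\mathcal{A} := C^*(\varphi(\mathcal{V}))$ is a C*-cover of $\mathcal{V}$, so the universal property of $C^*_\textnormal{e}(\mathcal{V})$ yields a $*$-epimorphism $\pi : \mathcal{A} \to C^*_\textnormal{e}(\mathcal{V})$ with $\pi \circ \varphi = \kappa$; since $\varphi(p)$ is a projection and $\pi$ is a $*$-homomorphism, $\kappa(p)$ is a projection, which is exactly $(2)$. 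For $(2) \Rightarrow (1)$, compose $\kappa$ with a faithful unital $*$-representation $\sigma : C^*_\textnormal{e}(\mathcal{V}) \to B(H)$: then $\sigma \circ \kappa$ is a complete order embedding carrying the projection $p$ to the projection $\sigma(\kappa(p))$.

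The substance is $(2) \Rightarrow (3)$. Assuming $p$ is a genuine projection in $C^*_\textnormal{e}(\mathcal{V}) \subseteq B(H)$, I would decompose $H = pH \oplus p^\perp H$ and write each element of $\mathcal{V}$ as a $2 \times 2$ operator matrix relative to this splitting. The crucial step is to identify the abstract cone $\mathbfcal{C}(p)_n$ explicitly with the genuine positive cone of this blocked representation. Here the asymmetric roles of $\epsilon$ (infinitesimal, multiplying $I_n \otimes (p \oplus p^\perp)$) and $t$ (arbitrary, multiplying $I_n \otimes (p^\perp \oplus p)$) in the defining relation are precisely what forces the complementary blocks to drop out, so that the quotient by $\mathcal{J}$ recovers $\mathcal{V}$ faithfully; producing a completely positive left inverse to $\pi_p$ built from this block structure then shows $\pi_p$ is a complete order embedding. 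I expect this matching to be the main obstacle: verifying that the Archimedean-closure cone $\mathbfcal{C}(p)_n$ coincides with the concrete positive cone of the block representation requires careful control of the Archimedeanization inside the quotient and a precise accounting of how the two perturbation terms interact with the decomposition determined by $p$.

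Finally, for $(3) \Rightarrow (1)$ I would represent the quotient operator system $M_2(\mathcal{V})/\mathcal{J}$ faithfully, e.g.\ via a unital complete order embedding $\rho$ into some $B(K)$ coming from its own C*-envelope. The defining relations of $\mathbfcal{C}(p)$, together with the computation $(p \otimes J_2)^2 = 2(p \otimes J_2)$, force $\rho(\pi_p(p))$ to be a scalar multiple of a projection and $\rho(\pi_p(p))$, $\rho(\pi_p(p^\perp))$ to behave as complementary orthogonal projections summing to the unit of the relevant corner. After normalizing and compressing to that corner, $\rho \circ \pi_p$ gives a complete order embedding of $\mathcal{V}$ sending $p$ to a genuine projection, which is $(1)$ and closes the cycle.
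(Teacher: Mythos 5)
First, a point of comparison: the paper does not prove this statement at all --- it is quoted verbatim from Theorems 5.3, 5.7 and 5.8 of \cite{araiza2020abstract} --- so your attempt can only be measured against that source. Your treatment of $(1)\Leftrightarrow(2)$ is correct and standard, and your outline of $(2)\Rightarrow(3)$ is essentially the right argument: writing $p_1=p$, $p_2=p^\perp$ for the projection and its complement in $B(H)$, the compression $\phi_p\bigl((x_{ij})\bigr)=\sum_{i,j}p_i x_{ij} p_j$ sends $\mathbfcal{C}(p)_n$ into the positive cone (conjugating the defining relation by $I_n\otimes\left(\begin{smallmatrix}p\\ p^\perp\end{smallmatrix}\right)$ sends $I_n\otimes(p\oplus p^\perp)$ to $I_n\otimes e$ and annihilates $I_n\otimes(p^\perp\oplus p)$, which is exactly the $\epsilon$/$t$ asymmetry you point to), kills $\mathcal{J}$, and hence descends to a unital completely positive left inverse of $\pi_p$; a ucp map with a ucp left inverse is a complete order embedding.

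The gap is in $(3)\Rightarrow(1)$. The expression $(p\otimes J_2)^2$ is not defined: $M_2(\mathcal{V})$ is an operator system, not an algebra, and even inside a C*-cover of $M_2(\mathcal{V})/\mathcal{J}$ the identity $\rho(\pi_p(p))^2=2\,\rho(\pi_p(p))$ is precisely the kind of conclusion you are trying to reach, not a computation you can invoke; the square of an element is not determined by the operator system structure, as the paper's own example of the diagonal $2\times 2$ matrices shows (one complete order embedding sends $p$ to a projection, another does not). Likewise there is no ``corner'' to compress to: $\pi_p$ is unital, so $\rho(\pi_p(p))+\rho(\pi_p(p^\perp))$ is already the identity of $B(K)$, and if $\rho(\pi_p(p))$ were only a scalar multiple of a projection, no normalization of the embedding would repair that. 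What is actually needed --- and what constitutes the hard content of the cited theorems --- is a proof that the specific element $p\otimes J_2+\mathcal{J}$ (equivalently $(e\oplus 0)+\mathcal{J}$, once one shows $p\otimes J_2-(e\oplus 0)\in\mathcal{J}$) is a genuine projection in the C*-envelope of the quotient $M_2(\mathcal{V})/\mathcal{J}$; this requires a structural analysis of the cones $\mathbfcal{C}(p)_n$ (or the universal property of the quotient established in \cite{araiza2021universal}) and cannot be read off from an arbitrarily chosen faithful representation. As written, your third implication asserts the theorem rather than proving it.
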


\begin{example}
If $\mathcal{V}$ is an operator system and $p \in \mathcal{V}$ is an element, it is possible for one complete order embedding of $\mathcal{V}$ into $B(H)$ to take $p$ to a projection while another complete order embedding of $\mathcal{V}$ into $B(H)$ will not.

For instance, let $\mathcal{V} := \left\{ \left( \begin{smallmatrix} a & 0 \\ 0 & b \end{smallmatrix} \right) : a,b \in \mathbb{C} \right\}$ be the operator system of diagonal $2 \times 2$ matrices, and let $p := \left( \begin{smallmatrix} 1 & 0 \\ 0 & 0 \end{smallmatrix} \right)$.  Consider the canonical complete embedding $i : \mathcal{V} \to B(\C^2) \cong M_2(\C)$ with $i(A) = A$, which takes a $2 \times 2$ matrix to the operator given by left multiplication by that matrix. We see that $i(p) = \left( \begin{smallmatrix} 1 & 0 \\ 0 & 0 \end{smallmatrix} \right)$ is a projection in $B(\C^2)$.

On the other hand, consider the state $s : \mathcal{V} \to \C$ given by $s\left( \begin{smallmatrix} a & 0 \\ 0 & b \end{smallmatrix} \right) = \frac{a+b}{2}$.  Define $T : \mathcal{V} \to B(\mathbb{C}^2 \oplus \mathbb{C}) \cong M_3(\mathbb{C})$ by $T = i \oplus s$.  (In particular, with our identification $T \left( \begin{smallmatrix} a & 0 \\ 0 & b \end{smallmatrix} \right) = a \oplus b \oplus \frac{a+b}{2} = \left( \begin{smallmatrix} a & 0 & 0 \\ 0 & b & 0 \\ 0 & 0 & (a+b)/2 \end{smallmatrix} \right)$.)  Since $i$ is completely positive and states are completely positive \cite[Proposition~3.8]{paulsen2002completely}, $T = i \oplus s$ is completely positive.  Moreover, the injectivity of $i$ implies $T$ is injective.  Finally, since $i$ is a complete order embedding, $T(A) \geq 0$ implies $i(A) \geq 0$ implies $A \geq 0$, and hence $T$  is a complete order embedding.  Thus $T$ is a complete order embedding, but $T(p) = 1 \oplus 0 \oplus \frac{1}{2}$ is not a projection.
\end{example}

\begin{definition}
Let $\mathcal V$ be an operator system and let $p \in \mathcal V$ be a positive contraction. We call $p$ a \emph{$k$-order abstract projection} if the mapping $\pi_p: \mathcal V \to M_2(\mathcal V)/ \mathcal J, x \mapsto x \otimes J_2 + \mathcal J,$ is a $k$-order embedding. 
\end{definition}

Consider a $k$-minimal operator system $\mathcal V$ and let $p \in \mathcal V$ be a $k$-order abstract projection. By \cite[Proposition 3.2]{araiza2021universal} we have $\pi_p: \mathcal V \to M_2(\mathcal V)/ \mathcal J$ is completely positive. Consider the map $\pi_p^{-1}: \pi_p(\mathcal V) \to \mathcal V$. Since $\mathcal V = \mathcal V_\text{$k$-min}$ the map $\pi_p^{-1}$ is completely positive if and only if it is $k$-positive (see \cite[Theorem 3.8]{xhabli2012super}). Since $p$ is a $k$-order abstract projection then $\pi_p^{-1}: \pi_p(\mathcal V) \to \mathcal V$ is $k$-positive and therefore $\pi_p^{-1}: \pi_p(\mathcal V) \to \mathcal V$ is completely positive by Proposition \ref{prop: k-positive implies cp on k-min}. Since $\pi_p: \mathcal V \to \pi_p(\mathcal V)$ is a linear isomorphism, by assumption we have $\pi_p$ is a complete order embedding. Consequently, $p$ is an abstract projection. We record these remarks here: 

\begin{proposition}\label{prop: k-order projection is projection in k-minimal operator system}
Let $\mathcal V$ be a $k$-minimal operator system and let $p \in \mathcal V$ be a positive contraction. Then $p$ is an abstract projection if and only if $p$ is a $k$-order abstract projection.
\end{proposition}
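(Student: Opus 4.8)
The plan is to prove the two implications separately; the forward direction is immediate, and the reverse is where $k$-minimality does the work.

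First I would dispatch the implication that an abstract projection is a $k$-order abstract projection. By Theorem~\ref{thm: abstract projections in operator systems}, $p$ is an abstract projection exactly when $\pi_p$ is a complete order embedding. A complete order embedding is in particular an order isomorphism onto its range at every matrix level; taking the $k$\textsuperscript{th} level shows it is $k$-positive and injective with $(\pi_p)_k^{-1}$ carrying the level-$k$ cone of the range back into the level-$k$ cone of the domain. This is precisely the definition of a $k$-order embedding, so $\pi_p$ is a $k$-order embedding and $p$ is a $k$-order abstract projection.

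For the converse, suppose $p$ is a $k$-order abstract projection, i.e.\ $\pi_p$ is a $k$-order embedding. The crucial starting point is that $\pi_p$ is \emph{always} completely positive (Proposition~3.2 of \cite{araiza2021universal}), with no hypothesis on $p$. Hence to conclude $\pi_p$ is a complete order embedding it suffices to show that $\pi_p^{-1}: \pi_p(\mathcal V) \to \mathcal V$ is completely positive, where $\pi_p(\mathcal V)$ is equipped with the operator system structure inherited from the quotient $M_2(\mathcal V)/\mathcal J$. The defining property of a $k$-order embedding is exactly that $\pi_p^{-1}$ is $k$-positive when $\pi_p(\mathcal V)$ and $\mathcal V$ are viewed as $k$-AOU spaces through their level-$k$ cones.

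The heart of the argument is then to upgrade this $k$-positivity to complete positivity, and this is where $k$-minimality is used. Since $\mathcal V$ is $k$-minimal we have $\mathcal V = \mathcal V_\text{$k$-min}$, so Proposition~\ref{prop: k-positive implies cp on k-min} applies to the $k$-positive map $\pi_p^{-1}: \pi_p(\mathcal V) \to \mathcal V_\text{$k$-min}$ and shows it is completely positive with respect to the inherited quotient operator system structure on the domain. Combining this with the complete positivity of $\pi_p$ gives that $\pi_p$ is a complete order embedding, so $p$ is an abstract projection. I expect no serious obstacle; the only delicate point is bookkeeping of the several order structures in play, in particular confirming that the level-$k$ cone witnessing $\pi_p$ as a $k$-order embedding is precisely the one induced on $\pi_p(\mathcal V)$ by the quotient, so that the phrase ``any operator system structure on the domain'' in Proposition~\ref{prop: k-positive implies cp on k-min} may legitimately be taken to be this inherited structure.
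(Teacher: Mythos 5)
Your proof is correct and takes essentially the same route as the paper: the forward implication is the trivial restriction of a complete order embedding to the $k$\textsuperscript{th} level, and the converse combines the unconditional complete positivity of $\pi_p$ with an application of Proposition~\ref{prop: k-positive implies cp on k-min} to the $k$-positive map $\pi_p^{-1}:\pi_p(\mathcal V)\to\mathcal V=\mathcal V_\text{$k$-min}$. Your closing remark about which operator system structure on $\pi_p(\mathcal V)$ is in play is exactly the right bookkeeping, and it is handled the same way in the paper's argument.
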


\begin{definition} \label{defn: C[p]}
Let $(\mathcal{V}, C, e)$ be a $k$-AOU space and assume $I_k \otimes p \in M_k(\mathcal{V})$ is a positive contraction. Define the set $C[p]$ to be the set of $x \in M_k(\mathcal{V})$ such that $x^*=x$ and for every $\epsilon > 0$ there exists $t > 0$ such that 
\begin{equation} (\alpha^* + \beta^*)x(\alpha + \beta) + (\epsilon \alpha^* \alpha + t \beta^* \beta) \otimes p + (\epsilon \beta^* \beta + t \alpha^* \alpha) \otimes p^{\perp} \in C \label{eq: defn of C[p]}
\end{equation}
for every $\alpha, \beta \in M_k$.
\end{definition}

The relevance of Definition \ref{defn: C[p]} is illustrated by the following proposition.

\begin{proposition}\label{prop: k-min system then p k-order projection iff coincidence of C_k and C_k(p)} 
Let $(\mathcal{V},\mathbfcal{C},e)$ be a $k$-minimal operator system. Then a positive contraction $p \in \mathcal{V}$ is a $k$-order abstract projection if and only if $\mathbfcal C_k = \mathbfcal C_k[p]$.
\end{proposition}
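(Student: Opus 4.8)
The plan is to reduce the statement to a single computation at the $2k$-th matrix level, exploiting $k$-minimality throughout. Since $\mathcal{V}$ is $k$-minimal we have $\mathbfcal{C}_k = C$ and, crucially, $\mathbfcal{C}_{2k} = C_{2k}^\text{$k$-min}$, so that membership in $\mathbfcal{C}_{2k}$ is detected entirely by the compressions $\gamma^*(\cdot)\gamma \in C$ with $\gamma \in M_{2k,k}$. Recalling that $\pi_p(x) = x \otimes J_2 + \mathcal{J}$ is always unital completely positive, hence $k$-positive, by \cite[Proposition 3.2]{araiza2021universal}, the extra content of ``$p$ is a $k$-order abstract projection'' is that the reverse implication $(\pi_p)_k(x) \geq 0 \Rightarrow x \in \mathbfcal{C}_k$ holds, together with injectivity of $\pi_p$. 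Thus the whole proposition follows once I establish the key equivalence: for hermitian $x \in M_k(\mathcal{V})$,
\[ (\pi_p)_k(x) \geq 0 \text{ in } M_k(M_2(\mathcal{V})/\mathcal{J}) \iff x \in \mathbfcal{C}_k[p]. \]
Granting this, $k$-positivity of $\pi_p$ gives $\mathbfcal{C}_k \subseteq \mathbfcal{C}_k[p]$ for free, and $\pi_p$ is a $k$-order embedding precisely when the reverse inclusion holds, i.e. precisely when $\mathbfcal{C}_k = \mathbfcal{C}_k[p]$.

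To prove the equivalence I would first unwind the quotient. By the construction of \cite[Theorem 4.9]{araiza2020abstract}, the level-$k$ cone of $M_2(\mathcal{V})/\mathcal{J}$ is $\mathbfcal{C}(p)_k + M_k(\mathcal{J})$, so $(\pi_p)_k(x) \geq 0$ means that the natural hermitian lift $x \otimes J_2 \in M_k(M_2(\mathcal{V}))$ satisfies $x \otimes J_2 - J \in \mathbfcal{C}(p)_k$ for some $J \in M_k(\mathcal{J})$. I would then argue that the correction term $J$ can be removed. Taking $J$ hermitian (as both $x\otimes J_2$ and $\mathbfcal{C}(p)_k$ are hermitian), it suffices that the hermitian part of $M_k(\mathcal{J})$ lie in $\mathbfcal{C}(p)_k \cap -\mathbfcal{C}(p)_k$. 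This uses $\mathcal{J} = \Span\{\mathbfcal{C}(p)_1 \cap -\mathbfcal{C}(p)_1\}$ together with the matrix-ordering (compression) compatibility of the family $\{\mathbfcal{C}(p)_n\}$: any hermitian element of $M_k(\mathcal{J})$ is a real combination of terms $\delta^* y \delta$ with $y \in \mathbfcal{C}(p)_1 \cap -\mathbfcal{C}(p)_1$ and $\delta \in M_{1,k}$, each of which lies in $\mathbfcal{C}(p)_k \cap -\mathbfcal{C}(p)_k$. Since $\mathbfcal{C}(p)_k$ is closed under addition, $x \otimes J_2 - J \in \mathbfcal{C}(p)_k$ is then equivalent to $x \otimes J_2 \in \mathbfcal{C}(p)_k$.

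The second half is the computation identifying $\{x : x \otimes J_2 \in \mathbfcal{C}(p)_k\}$ with $\mathbfcal{C}_k[p]$. Unwinding the definition of $\mathbfcal{C}(p)_k$, the condition $x \otimes J_2 \in \mathbfcal{C}(p)_k$ says that for every $\epsilon > 0$ there is $t > 0$ with $x \otimes J_2 + \epsilon\, I_k \otimes (p \oplus p^\perp) + t\, I_k \otimes (p^\perp \oplus p) \in \mathbfcal{C}_{2k}$. Using $k$-minimality I replace this $\mathbfcal{C}_{2k}$-membership by the compressions $\gamma^*(\cdot)\gamma \in C$, $\gamma \in M_{2k,k}$; reconciling the groupings $M_{2k}(\mathcal{V}) \cong M_2(M_k(\mathcal{V}))$, the lift becomes $\left(\begin{smallmatrix} x & x \\ x & x \end{smallmatrix}\right)$ and the diagonal corrections become $\left(\begin{smallmatrix} I_k\otimes p & 0 \\ 0 & I_k\otimes p^\perp \end{smallmatrix}\right)$ and its swap. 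Writing $\gamma = \left(\begin{smallmatrix} \alpha \\ \beta \end{smallmatrix}\right)$ with $\alpha,\beta \in M_k$, a direct expansion yields exactly
\[ (\alpha+\beta)^* x (\alpha+\beta) + (\epsilon \alpha^*\alpha + t\beta^*\beta)\otimes p + (\epsilon \beta^*\beta + t\alpha^*\alpha) \otimes p^\perp, \]
which is precisely the expression of Definition~\ref{defn: C[p]}; matching the $\forall\epsilon\,\exists t$ quantifiers gives $x\otimes J_2 \in \mathbfcal{C}(p)_k \iff x \in \mathbfcal{C}_k[p]$. This completes the equivalence. For the reverse direction of the proposition, injectivity of $\pi_p$ is then automatic once $\mathbfcal{C}_k = \mathbfcal{C}_k[p]$: if $\pi_p(a) = 0$ for hermitian $a$, then $(\pi_p)_k(I_k \otimes a) = 0$, so both $I_k \otimes a$ and $I_k \otimes (-a)$ lie in $\mathbfcal{C}_k[p] = \mathbfcal{C}_k$, whence $I_k \otimes a \in \mathbfcal{C}_k \cap -\mathbfcal{C}_k = \{0\}$ by properness and $a = 0$.

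I expect the main obstacle to be the first half, namely handling the quotient by $\mathcal{J}$: it requires pinning down the hermitian part of $M_k(\mathcal{J})$ inside $\mathbfcal{C}(p)_k \cap -\mathbfcal{C}(p)_k$ and invoking the matrix-ordering compatibility of $\{\mathbfcal{C}(p)_n\}$ from \cite{araiza2020abstract}. The compression computation, while the crux of the correspondence, is essentially mechanical once the two tensor groupings $M_2 \otimes M_k$ and $M_k \otimes M_2$ are carefully reconciled.
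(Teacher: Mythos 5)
Your proof is correct and follows essentially the same route as the paper's: the crux in both arguments is the canonical shuffle followed by conjugation by $T=\left(\begin{smallmatrix}\alpha\\ \beta\end{smallmatrix}\right)$, with $k$-minimality used to detect membership in $\mathbfcal{C}_{2k}$ via compressions down to level $k$, which turns $x\otimes J_2\in\mathbfcal{C}(p)_k$ into exactly the defining condition of $\mathbfcal{C}_k[p]$. If anything you are more careful than the paper, which tacitly identifies positivity of $(\pi_p)_k(x)$ in the quotient with $x\otimes J_2\in\mathbfcal{C}(p)_k$ (omitting the $M_k(\mathcal{J})$ correction you absorb into $\mathbfcal{C}(p)_k\cap-\mathbfcal{C}(p)_k$) and does not explicitly verify injectivity of $\pi_p$.
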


\begin{proof}
Suppose $\mathbfcal C_k = \mathbfcal C_k[p]$, and suppose that $x \otimes J_2 \in \mathbfcal{C}(p)_k$. Thus $x = x^*$ and for all $\epsilon >0$ there exists $t>0$ such that  \begin{align*}
    x \otimes J_2 + \epsilon I_k \otimes (p \oplus p^{\perp}) + t I_k \otimes (p^{\perp} \oplus p) \in \mathbfcal C_{2k}.
\end{align*} Applying the canonical shuffle $M_k(\mathcal V) \otimes M_2 \to M_2 \otimes M_k(\mathcal V)$, and using that $\mathbfcal C$ is a matrix ordering, then  conjugation by \[ T = \begin{pmatrix}
\alpha \\
\beta
\end{pmatrix} \] implies $x \in \mathbfcal C_k[p]$ and thus $x \in \mathbfcal C_k.$ Therefore $\pi_p$ is a $k$-order embedding, making $p$ a $k$-order abstract projection in $\mathcal{V}$. 

Conversely, suppose $p$ is a $k$-order abstract projection in $\mathcal{V}$. Thus $\pi_p$ is a $k$-order embedding. If $x \in \mathbfcal C_k$ then for every $\epsilon >0$ and every $t>0$ we have  \begin{align*}
    x \otimes J_2 + \epsilon I_k \otimes (p \oplus p^{\perp}) + t I_k \otimes (p^{\perp} \oplus p) \in \mathbfcal C_{2k}.
\end{align*} Thus, given any $\alpha, \beta \in M_k$, Equation~(\ref{eq: defn of C[p]}) is satisfied after applying a canonical shuffle and conjugating by $T$. Thus, $x \in \mathbfcal C_k[p]$, and we have established $\mathbfcal C_k \subseteq \mathbfcal C_k[p]$.  Now suppose $x \in \mathbfcal C_k[p]$ and let $\epsilon >0$ be arbitrary. Then there exists $t>0$ such that \[
\begin{pmatrix}
\alpha \\
\beta
\end{pmatrix}^*(J_2 \otimes x + \epsilon (p \oplus p^\perp) \otimes I_k + t (p^\perp \oplus p) \otimes I_k)\begin{pmatrix}
\alpha \\
\beta
\end{pmatrix} \in \mathbfcal C_k
\] for all $\alpha,\beta \in M_k$. After applying a canonical shuffle we have $x \otimes J_2 + \epsilon I_k \otimes (p \oplus p^{\perp}) + t I_k \otimes (p^{\perp} \oplus p) \in \mathbfcal C_{2k}^\text{$k$-min}$, which implies $x \otimes J_2 + \epsilon I_k \otimes (p \oplus p^{\perp}) + t I_k \otimes (p^{\perp} \oplus p) \in \mathbfcal C_{2k}$ by our assumption that $\mathcal V$ is $k$-minimal. Since $\pi_p$ is a $k$-order embedding, $x \in \mathbfcal C_k$. Therefore $\mathbfcal C_k = \mathbfcal C_k[p]$ \qedhere
\end{proof}

\begin{remark}
If $k \in \mathbb N$ and if $(\mathcal V, C, e)$ is a $k$-AOU, space then by Corollary~\ref{cor: extensions of k-AOU spaces agree on levels preceeding k} we know all possible extensions of the $k$-AOU space yield the same order structure up to the $k$\textsuperscript{th} level. This is to say that given an extension $\mathbfcal C$ of $(\mathcal V, C, e)$, then the cones $\mathbfcal C_1,\dotsc,\mathbfcal C_k$, are unique. Thus, there is no ambiguity in saying an element $p \in \mathcal V$  is a positive contraction, since this means $p$ is positive in some (and hence every) extension of $(\mathcal V, C, e).$ Moreover, by Corollary~\ref{cor: positive at 1st level} we have for $p \in \mathcal V$ that $I_k \otimes p \in C$ if and only if $p \in \mathbfcal C_1$ for every (equivalently, any) extension $\mathbfcal C$, of $(\mathcal V, C, e).$ These observations imply the following corollary.
\end{remark}

\begin{corollary}\label{cor: p k-order abstract projection relative to all extensions}
Let $k \in \mathbb{N}$ and let $(\mathcal V, C,e)$ be a $k$-AOU space. If $p \in \mathcal V$ such that $I_k \otimes p$ is a positive contraction and $C[p] = C$, then given any extension $\mathbfcal C$ of $C$ it follows $\mathbfcal C_k[p] = \mathbfcal C_k.$ 
\end{corollary}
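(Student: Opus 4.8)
The plan is to reduce the claim to the single observation that the bracket construction of Definition~\ref{defn: C[p]} depends on the ambient matrix ordering only through its level-$k$ cone. First I would recall that, by the very definition of an extension, every extension $\mathbfcal{C}$ of $C$ satisfies $\mathbfcal{C}_k = C$; this is exactly the content invoked in Corollary~\ref{cor: extensions of k-AOU spaces agree on levels preceeding k}, which guarantees that all extensions agree through the $k$\textsuperscript{th} level. Consequently, the symbol $\mathbfcal{C}_k[p]$ denotes precisely the construction of Definition~\ref{defn: C[p]} applied to the cone $\mathbfcal{C}_k$, and no other feature of $\mathbfcal{C}$ enters into it.

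Next I would check that the hypothesis is well-posed relative to every extension. Forming the bracket construction requires $I_k \otimes p$ to be a positive contraction in $(M_k(\mathcal{V}), C, I_k \otimes e)$, that is, $I_k \otimes p \in C$ and $I_k \otimes p^{\perp} \in C$. By Corollary~\ref{cor: positive at 1st level}, each of these membership conditions is equivalent to $p \in \mathbfcal{C}_1$, respectively $p^{\perp} \in \mathbfcal{C}_1$, for every (equivalently, any) extension $\mathbfcal{C}$. Hence the phrase ``positive contraction'' is unambiguous across extensions, and $p^{\perp} = e - p$ is determined by $e$ and $p$ alone, so the construction is legitimately defined relative to any extension.

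The heart of the argument is then immediate. The defining membership condition~(\ref{eq: defn of C[p]}) asks that a certain self-adjoint element of $M_k(\mathcal{V})$ lie in the reference cone, and that reference cone is exactly $C$. Since $\mathbfcal{C}_k = C$ for any extension $\mathbfcal{C}$, substituting $\mathbfcal{C}_k$ for $C$ leaves Equation~(\ref{eq: defn of C[p]}) unchanged verbatim, so $\mathbfcal{C}_k[p] = C[p]$. Combining this with the hypothesis $C[p] = C$ and the identity $\mathbfcal{C}_k = C$ gives
\[ \mathbfcal{C}_k[p] = C[p] = C = \mathbfcal{C}_k, \]
as desired.

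I do not anticipate a genuine obstacle here: the corollary is essentially a bookkeeping consequence of the fact that every operator system extension of a $k$-AOU space shares the same cone at the $k$\textsuperscript{th} level. The only point demanding care is confirming that the notation $\mathbfcal{C}_k[p]$ really does refer to the bracket construction carried out at level $k$ with $\mathbfcal{C}_k$ as its reference cone—so that it depends on $\mathbfcal{C}$ solely through $\mathbfcal{C}_k$—rather than on any higher-level data of the matrix ordering. Once this is pinned down, the equalities above close the proof.
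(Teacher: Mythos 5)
Your proof is correct and follows essentially the same route as the paper: the paper's preceding remark likewise observes that every extension $\mathbfcal{C}$ satisfies $\mathbfcal{C}_k = C$ (and that ``positive contraction'' is unambiguous across extensions, via Corollary~\ref{cor: positive at 1st level}), so that the bracket construction at level $k$ depends on $\mathbfcal{C}$ only through $\mathbfcal{C}_k$, giving $\mathbfcal{C}_k[p] = C[p] = C = \mathbfcal{C}_k$. Your extra care in confirming that $\mathbfcal{C}_k[p]$ uses no higher-level data of the matrix ordering is exactly the right point to pin down, and it is consistent with how the notation is used in Proposition~\ref{prop: k-min system then p k-order projection iff coincidence of C_k and C_k(p)}.
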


In light of these results, we call a positive contraction $p$ in a $k$-AOU space $(\mathcal{V},C,e)$ an \textit{abstract projection} if $C[p]=C$. Combining the results above, we have proven the following.

\begin{theorem} \label{thm: abstract projections in k-AOU}
Let $k \in \mathbb{N}$ and let $(\mathcal{V},C,e)$ be a $k$-AOU space. Suppose that $p \in \mathcal{V}$ is a positive contraction. Then the following statements are equivalent:
\begin{enumerate}
    \item For every extension $\mathbfcal{C}$ of $C$, $\mathbfcal{C}_k[p] = \mathbfcal{C}_k$.
    \item $p$ is an abstract projection in $\mathcal{V}_\text{$k$-min}$.
    \item $p$ is an abstract projection in the $k$-AOU space $(\mathcal{V},C,e)$.
\end{enumerate}
\end{theorem}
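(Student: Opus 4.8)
The plan is to reduce all three conditions to the single cone identity $C[p]=C$, exploiting the fact that the bracket construction of Definition~\ref{defn: C[p]} depends only on the level-$k$ cone. The unifying observation is that $\mathcal{V}_\text{$k$-min}$ is itself a $k$-minimal operator system (indeed $\text{OMIN}_k(\mathcal{V}_\text{$k$-min}) = \mathcal{V}_\text{$k$-min}$ by Proposition~\ref{prop: k-min and OMIN_k}) whose $k$\textsuperscript{th} cone $C_k^\text{$k$-min}$ equals $C$, so that both propositions from the preceding discussion apply directly to it. I would establish the equivalences $(2)\Leftrightarrow(3)$ and $(1)\Leftrightarrow(3)$ separately.

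For $(2)\Leftrightarrow(3)$, I would first invoke Proposition~\ref{prop: k-order projection is projection in k-minimal operator system}: because $\mathcal{V}_\text{$k$-min}$ is $k$-minimal, $p$ is an abstract projection in $\mathcal{V}_\text{$k$-min}$ if and only if $p$ is a $k$-order abstract projection in $\mathcal{V}_\text{$k$-min}$. I would then apply Proposition~\ref{prop: k-min system then p k-order projection iff coincidence of C_k and C_k(p)} to the $k$-minimal operator system $\mathcal{V}_\text{$k$-min}$, which asserts that $p$ is a $k$-order abstract projection exactly when $C_k^\text{$k$-min} = C_k^\text{$k$-min}[p]$. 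Since $C_k^\text{$k$-min} = C$, and since the defining inequality~\eqref{eq: defn of C[p]} references only the single cone $C$ at matrix level $k$, we have $C_k^\text{$k$-min}[p] = C[p]$; hence this condition is precisely $C = C[p]$, which is statement $(3)$.

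For $(1)\Leftrightarrow(3)$, I would use the same level-$k$-only observation. Every extension $\mathbfcal{C}$ of $C$ satisfies $\mathbfcal{C}_k = C$ by the definition of an extension (equivalently, by Corollary~\ref{cor: extensions of k-AOU spaces agree on levels preceeding k}), and consequently $\mathbfcal{C}_k[p] = C[p]$ as well. Thus, for each fixed extension, the equation $\mathbfcal{C}_k[p] = \mathbfcal{C}_k$ is literally identical to the assertion $C[p] = C$. The implication $(3)\Rightarrow(1)$ is then exactly Corollary~\ref{cor: p k-order abstract projection relative to all extensions}, while $(1)\Rightarrow(3)$ follows by specializing to the particular extension $\mathbfcal{C} = C^\text{$k$-min}$.

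Since no new computation is required, the theorem is essentially an assembly of the earlier results, and the only point demanding care—the main ``obstacle''—is the bookkeeping verification that the two propositions are applicable and that the bracket construction genuinely ignores all matrix levels except the $k$\textsuperscript{th}. The former rests on $\mathcal{V}_\text{$k$-min}$ being $k$-minimal, which is Proposition~\ref{prop: k-min and OMIN_k}; the latter is immediate from inspecting Definition~\ref{defn: C[p]}, whose condition~\eqref{eq: defn of C[p]} involves only elements of $M_k(\mathcal{V})$ and membership in the lone cone $C$. I would state these two small observations explicitly at the outset so that the subsequent citations go through cleanly.
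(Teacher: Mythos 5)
Your proposal is correct and matches the paper's intent exactly: the paper gives no separate proof, stating only that the theorem follows by ``combining the results above,'' and your argument is precisely that assembly --- reducing (2) to $C[p]=C$ via Propositions~\ref{prop: k-order projection is projection in k-minimal operator system} and~\ref{prop: k-min system then p k-order projection iff coincidence of C_k and C_k(p)} applied to the $k$-minimal system $\mathcal{V}_{\text{$k$-min}}$, and reducing (1) to $C[p]=C$ via the observation that the bracket construction sees only the level-$k$ cone, with $C^{\text{$k$-min}}$ witnessing that (1) is not vacuous. Your explicit remarks that $\mathcal{V}_{\text{$k$-min}}$ is $k$-minimal and that Definition~\ref{defn: C[p]} depends only on $\mathbfcal{C}_k$ are exactly the bookkeeping the paper leaves implicit.
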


Given a $k$-AOU space $\mathcal{V}$, let $\text{Proj}(\mathcal{V})$ denote the set of all abstract projections in $\mathcal{V}$. Using a result from \cite{araiza2020abstract}, together with results from the previous section, it turns out that every element of $\text{Proj}(\mathcal{V})$ is a projection in the C*-algebra $C^*_e(\mathcal{V}_\text{$k$-min})$. 

\begin{corollary} \label{cor: projection in C*-envelope}
Let $\mathcal{V}$ be a $k$-AOU space. Then $p$ is an abstract projection in $\mathcal{V}$ if and only if $p$ is a projection in $C^*_e(\mathcal{V}_\text{$k$-min})$. Consequently, $p$ is an abstract projection in $\mathcal{V}$ if and only if there exists a $k$-order embedding $\pi: \mathcal{V} \to \bigoplus_{i \in I, d_i \leq k} M_{d_i}$ such that $\pi(p)$ is projection.
\end{corollary}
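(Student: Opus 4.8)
The plan is to assemble this corollary from three already-established ingredients: the equivalence in Theorem~\ref{thm: abstract projections in k-AOU} linking abstract projections in the $k$-AOU space $(\mathcal{V},C,e)$ with abstract projections in the operator system $\mathcal{V}_\text{$k$-min}$; the abstract-projection characterization for operator systems in Theorem~\ref{thm: abstract projections in operator systems}, which identifies abstract projections with genuine projections in the C*-envelope; and the concrete faithful representation of $C^*_e(\mathcal{V}_\text{$k$-min})$ furnished by Theorem~\ref{thm: k-minimal C*algebra is a subalgebra of ell infinity of matrix algebras}. First I would invoke Theorem~\ref{thm: abstract projections in k-AOU} (specifically the equivalence of (2) and (3)) to replace the statement ``$p$ is an abstract projection in the $k$-AOU space $\mathcal{V}$'' with the statement ``$p$ is an abstract projection in $\mathcal{V}_\text{$k$-min}$.'' Then, applying the equivalence of (2) and (3) in Theorem~\ref{thm: abstract projections in operator systems} to the operator system $\mathcal{V}_\text{$k$-min}$, I obtain that $p$ is an abstract projection in $\mathcal{V}_\text{$k$-min}$ if and only if $p$ is a projection in $C^*_e(\mathcal{V}_\text{$k$-min})$. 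Chaining these gives the first biconditional of the corollary.

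For the second sentence, the strategy is to transport the projection through an explicit embedding of the C*-envelope into a direct sum of small matrix algebras. Since $\mathcal{V}_\text{$k$-min}$ is a $k$-minimal operator system by construction, Corollary~\ref{cor: C^*_e(V) k-minimal when V is k-minimal} shows $C^*_e(\mathcal{V}_\text{$k$-min})$ is itself $k$-minimal, and hence by Theorem~\ref{thm: k-minimal C*algebra is a subalgebra of ell infinity of matrix algebras} there is a set $I$ and a faithful unital $*$-homomorphism $\pi: C^*_e(\mathcal{V}_\text{$k$-min}) \to \bigoplus_{i \in I, d_i \leq k} M_{d_i}$. Composing $\pi$ with the canonical unital complete order embedding $\kappa: \mathcal{V}_\text{$k$-min} \to C^*_e(\mathcal{V}_\text{$k$-min})$ yields a complete order embedding, in particular a $k$-order embedding, from $\mathcal{V}$ into the direct sum. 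Because $\pi$ is a $*$-homomorphism it carries the projection $p \in C^*_e(\mathcal{V}_\text{$k$-min})$ to a projection in the image, establishing the forward direction. For the converse, if $\pi(p)$ is a projection for some $k$-order embedding $\pi$ into such a direct sum, then (passing to $\mathcal{V}_\text{$k$-min}$ via Corollary~\ref{cor: k-min is injective}, under which $\pi$ becomes a complete order embedding into an injective $k$-minimal operator system) condition (1) of Theorem~\ref{thm: abstract projections in operator systems} is met, so $p$ is an abstract projection in $\mathcal{V}_\text{$k$-min}$, and again by Theorem~\ref{thm: abstract projections in k-AOU} it is an abstract projection in $\mathcal{V}$.

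I expect the main subtlety to be bookkeeping about \emph{which} category each map lives in and ensuring the hypotheses of each cited theorem are genuinely satisfied, rather than any deep obstacle. In particular, one must verify that the composite $\pi \circ \kappa$ really is a $k$-order embedding of $\mathcal{V}$ (as opposed to merely $k$-positive), which follows since $\kappa$ is a complete order embedding and $\pi$ is a faithful $*$-homomorphism, hence a complete order embedding onto its range; and in the converse direction one must confirm that a $k$-order embedding of $\mathcal{V}$ into the $k$-minimal system $\bigoplus M_{d_i}$ induces, via Corollary~\ref{cor: k-min is injective}, a \emph{complete} order embedding of $\mathcal{V}_\text{$k$-min}$, which is exactly what condition (1) of Theorem~\ref{thm: abstract projections in operator systems} requires. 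No further computation beyond these functorial checks should be needed.
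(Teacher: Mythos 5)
Your proposal is correct and follows essentially the same route as the paper: Theorem~\ref{thm: abstract projections in k-AOU} to pass between the $k$-AOU space $\mathcal{V}$ and the operator system $\mathcal{V}_\text{$k$-min}$, Theorem~\ref{thm: abstract projections in operator systems} to identify abstract projections in $\mathcal{V}_\text{$k$-min}$ with projections in $C^*_e(\mathcal{V}_\text{$k$-min})$, and Theorem~\ref{thm: k-minimal C*algebra is a subalgebra of ell infinity of matrix algebras} for the matrix-algebra embedding. You spell out both directions of the ``consequently'' clause (which the paper leaves implicit), and those added checks are accurate.
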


\begin{proof}
If $p$ is a projection in $C^*_e(\mathcal{V}_\text{$k$-min})$, then $p$ is an abstract projection in $\mathcal{V}$ by Theorem \ref{thm: abstract projections in k-AOU}. On the other hand, if $p$ is an abstract projection in $\mathcal{V}$, then Theorem \ref{thm: abstract projections in k-AOU} implies that $p$ is an abstract projection in $\mathcal{V}_\text{$k$-min}$. By \cite[Theorem 5.8]{araiza2020abstract}, $p$ is a projection in $C^*_e(\mathcal{V}_\text{$k$-min})$. The final statement follows from Theorem \ref{thm: k-minimal C*algebra is a subalgebra of ell infinity of matrix algebras}, which implies there exists a set $I$ and a faithful $*$-homomorphism $\pi: C^*_e(\mathcal{V}_\text{$k$-min}) \to \bigoplus_{i \in I, d_i \leq k} M_{d_i}$.
\end{proof}

In the next section, we will use the above corollary to obtain an application to the theory of quantum correlations.

\section{Quantum correlations from $k$-AOU spaces} \label{sec: quantum correlations and k-AOU spaces}

We begin this section by reviewing definitions and various notions from the theory of correlation sets. Following the standard notation, given a natural number $n \in \mathbb N$ we denote the set $[n]:= \{1,\dotsc,n\}$.  Given two natural numbers $m,n \in \mathbb N$, a \emph{correlation with n inputs and m outputs} is defined to be a tuple $p:= \{p(ab|xy)\}_{a,b \in [m], x,y \in [n]} \subseteq \mathbb R^{m^2 n^2},$ such that for each $x,y \in [n]$ it follows $\sum_{ab} p(ab|xy) = 1$ and $p(ab|xy) \geq 0$ for each $x,y \in [n]$ and $a,b \in [m].$ The elements $x,y$ are known as the \emph{inputs} of the correlation and the elements $a,b$ are known as the \emph{outputs}. The set of all correlations with $n$ inputs and $m$ outputs will be denoted $C(n,m)$, and such a correlation will be denoted simply as $p$ when no confusion will arise. Correlations represent bipartite systems where two parties, typically known as Alice and Bob, receive an input according to a particular probability distribution, and make measurements and produce two outputs, $a$ and $b$ respectively. Thus, given inputs $x,y \in [n]$, the positive real number $p(ab|xy)$ represents the probability that Alice produces output $a$ and Bob produces output $b$ given that Alice receives input $x$ and Bob receives input $y$. If $p \in C(n,m)$ is a correlation then we say it is \emph{nonsignalling} if the values \begin{align*}
    p_A(a|x):= \sum_b p(ab|xy), \quad \text{and} \quad p_B(b|y):= \sum_a p(ab|xy),
\end{align*} are well-defined for each $a,b,x,y$. This is to say the value $p_A(a|x)$ is independent of the input $y \in [n]$, and the value $p_B(b|y)$ is independent of the input $x \in [n]$. These values are known as the \emph{marginal densities} relative to the correlation $p$. The set of all nonsignalling correlations with $n$ inputs and $m$ outputs will be denoted by $C_{ns}(n,m).$ It is well known that the set of nonsignalling correlations is a convex polytope of $\mathbb R^{m^2 n^2}$. Nonsignalling correlations tell us that the two parties Alice and Bob both perform their measurements on their respective inputs independently without communication with one another. Of particular interest are various subsets of nonsignalling correlations. The study of correlations that are determined by suitable projections, with other natural properties, have generated tremendous interest in both the fields of operator algebras and quantum information theory. We now consider such subsets. 

Let $H$ be a Hilbert space. Then a \emph{projection-valued measure (PVM)} is a tuple of projections $\{E_i\}_{i=1}^m \subseteq B(H)$ such that $\sum_i E_i = \id_H$. Note this latter property necessarily implies the projections are pairwise orthogonal. A correlation $p \in C_{ns}(n,m)$ will be called a \emph{quantum commuting} correlation if there exists a Hilbert space $H$, a unit vector $\eta \in H$, and projection-valued measures $\{E_{xa}\}_{a \in [m]}, \{F_{yb}\}_{b \in [m]}$ for each input $x,y \in [n]$, such that $E_{xa}F_{yb} = F_{yb}E_{xa}$ for each $a,b,x,y$ and such that $p(ab|xy) = \langle \eta | E_{xa}F_{yb}\eta \rangle_H.$ The set of all quantum commuting correlations with $n$ inputs and $m$ outputs will be denoted $C_{qc}(n,m)$. If we require that the Hilbert space $H$ be finite-dimensional, then the correlation $p$ will be called a \emph{quantum} correlation. The set of all quantum correlations with $n$ inputs and $m$ outputs will be denoted $C_q(n,m)$. The closure $\overline{C_q(n,m)}$ will be denoted $C_{qa}(n,m)$ and we will call this the set of \emph{quantum approximate} correlations with $n$ inputs and $m$ outputs. For all input-output pairs $(n,m)$, we have the following string of inclusions \begin{align*}
    C_q(n,m) \subseteq C_{qa}(n,m) \subseteq C_{qc}(n,m) \subseteq C_{ns}(n,m).
\end{align*} For more details on quantum and quantum commuting correlations, we refer the interested reader to \cite[Section 2]{Paulsenchromatic}. 

Using the tools from the previous sections, as well as previous work of the first two authors \cite{araiza2020abstract}, our goal is to characterize quantum correlations via $k$-order abstract projections and $k$-AOU spaces. 

\begin{definition}\label{defn: quantum k-operator system}
A $k$-AOU space $(\mathcal V, C, e)$ will be called a \emph{quantum $k$-AOU space} if $\mathcal V = \Span \{Q(ab|xy): a,b \in [m], x,y \in [n]\}$ such that for each $x,y \in [n]$ one has $\sum_{ab}Q(ab|xy) = e$, the operators \begin{align*}
    E(a|x):= \sum_b Q(ab|xy),\quad \text{and} \quad F(b|y):= \sum_a Q(ab|xy),
\end{align*} are well-defined for each $x,y \in [n]$ and $a,b \in [m],$ and each generator $Q(ab|xy)$ is a an abstract projection in $\mathcal{V}$.
\end{definition}

\begin{remark}
In \cite{araiza2020abstract} the notion of a \emph{quantum commuting} operator system was introduced. In Definition~\ref{defn: quantum k-operator system}, if we instead require $\mathcal V$ to be an operator system, and if the generators $Q(ab|xy)$ are all abstract projections (as defined in \cite[Definition 5.4]{araiza2020abstract}, see also \cite[Theorem 3.3, Definition 4.4]{araiza2021universal}), then we say the operator system $\mathcal V$ is quantum commuting. By Theorem~\ref{thm: abstract projections in k-AOU} it follows whenever $\mathcal{V}$ is a quantum $k$-AOU space, then $\mathcal{V}_\text{$k$-min}$ is a quantum commuting operator system.
\end{remark}

We will make use of the following fact (see e.g. \cite[Proposition 6.1]{araiza2020abstract}): 
\begin{proposition} \label{prop: equivalent to correlation}
Let $n,m \in \mathbb N$ and let $p \in C(n,m)$. Then the following statements are equivalent.
\begin{enumerate}
    \item $p \in C_\text{qc}(n,m)$ (resp. $p \in C_\text{q}(n,m)).$
    \item There exists a (resp. finite dimensional) C*-algebra $\mathcal A$, projection valued measures $\{E_{xa}\}_{a=1}^m, \{F_{yb}\}_{b=1}^m \subseteq \mathcal A$ for each $x,y \in [n]$ satisfying $E_{xa}F_{yb} = F_{yb}E_{xa}$ for all $x,y \in [n]$ and $a,b \in [m]$, and a state $\varphi: \mathcal A \to \mathbb{C}$ such that $p(ab|xy) = \varphi(E_{xa}F_{yb})$.
    \item There exists an operator system $\mathcal V \subseteq B(H)$ (resp. for a finite dimensional Hilbert space $H$), projection valued measures $\{E_{xa}\}_{a=1}^m, \{F_{yb}\}_{b=1}^m$ for each $x,y \in [n]$ satisfying $E_{xa}F_{yb} \in \mathcal V$ and $E_{xa}F_{yb} = F_{yb}E_{xa}$ for all $x,y \in [n]$ and $a,b \in [m]$, and a state $\varphi: \mathcal V \to \mathbb{C}$ such that $p(ab|xy) = \varphi(E_{xa}F_{yb})$.
\end{enumerate}
\end{proposition}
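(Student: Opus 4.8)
The plan is to establish the cycle of implications $(1) \Rightarrow (3) \Rightarrow (2) \Rightarrow (1)$, carrying the finite-dimensional (``resp.'') hypotheses through each stage in parallel, since at every step the relevant construction is either built from or preserves a finite-dimensional object. For $(1) \Rightarrow (3)$, I would simply unpack the definition of $C_\text{qc}(n,m)$ (resp.\ $C_\text{q}(n,m)$), which supplies a Hilbert space $H$ (resp.\ finite-dimensional), a unit vector $\eta \in H$, and commuting PVMs $\{E_{xa}\}, \{F_{yb}\} \subseteq B(H)$ with $p(ab|xy) = \langle E_{xa}F_{yb}\eta, \eta\rangle$. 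Taking $\mathcal V = B(H)$ (an operator system containing every product $E_{xa}F_{yb}$) and letting $\varphi$ be the restriction to $\mathcal V$ of the vector state $T \mapsto \langle T\eta, \eta\rangle$ immediately yields the data required by $(3)$, with $\varphi(E_{xa}F_{yb}) = p(ab|xy)$.

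For $(3) \Rightarrow (2)$, given $\mathcal V \subseteq B(H)$, the PVMs, and the state $\varphi$, I would set $\mathcal A := C^*(\{E_{xa}, F_{yb}\}) \subseteq B(H)$. Each projection and each product $E_{xa}F_{yb}$ lies in $\mathcal A$, the PVM and commutation relations are inherited, and if $H$ is finite-dimensional then so are $B(H)$ and hence $\mathcal A$. The one substantive move is producing a state on $\mathcal A$: I would extend $\varphi$ from the operator system $\mathcal V$ to a functional $\wt\varphi$ on $B(H)$ using the standard extension of states on operator systems, namely that a unital positive functional has norm $\varphi(e) = 1$, so Hahn--Banach gives a norm-one unital extension, which is automatically a state. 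Restricting $\wt\varphi$ to $\mathcal A$ and using $E_{xa}F_{yb} \in \mathcal V$ keeps the relevant values fixed, $\wt\varphi(E_{xa}F_{yb}) = \varphi(E_{xa}F_{yb}) = p(ab|xy)$, which is exactly $(2)$.

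For $(2) \Rightarrow (1)$, I would pass to the GNS representation $(\pi_\varphi, H_\varphi, \eta_\varphi)$ of the pair $(\mathcal A, \varphi)$. Since $\mathcal A$ is unital and $\varphi$ a state, $\pi_\varphi$ is unital and $\eta_\varphi$ is a cyclic unit vector with $\varphi(a) = \langle \pi_\varphi(a)\eta_\varphi, \eta_\varphi\rangle$. Because $\pi_\varphi$ is a unital $*$-homomorphism, $\{\pi_\varphi(E_{xa})\}$ and $\{\pi_\varphi(F_{yb})\}$ are again commuting PVMs on $H_\varphi$, and $p(ab|xy) = \langle \pi_\varphi(E_{xa})\pi_\varphi(F_{yb})\eta_\varphi, \eta_\varphi\rangle$, which is precisely the defining data for $p \in C_\text{qc}(n,m)$. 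In the finite-dimensional case, $H_\varphi$ is a quotient of $\mathcal A$ and thus finite-dimensional, so $p \in C_\text{q}(n,m)$.

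I expect no deep obstacle, as the proposition is essentially a repackaging of the GNS construction together with the extension property of states; indeed it is quoted from \cite[Proposition 6.1]{araiza2020abstract}. The two points demanding genuine care are (a) the state-extension step in $(3) \Rightarrow (2)$, where one must confirm that the Hahn--Banach extension is truly a state and leaves the entries $\wt\varphi(E_{xa}F_{yb})$ unchanged, and (b) the bookkeeping guaranteeing that finite dimensionality is preserved at each stage, so that the ``resp.'' assertions for $C_\text{q}(n,m)$ and $C_\text{qc}(n,m)$ travel together around the entire cycle.
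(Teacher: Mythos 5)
Your proof is correct; the paper itself gives no proof of this proposition, quoting it as Proposition 6.1 of \cite{araiza2020abstract}, and your cycle $(1)\Rightarrow(3)\Rightarrow(2)\Rightarrow(1)$ via the vector state on $\mathcal V = B(H)$, Krein/Hahn--Banach extension of the state to $B(H)$ followed by restriction to $C^*(\{E_{xa},F_{yb}\})$, and the GNS construction is exactly the standard argument. The two points you flag both go through: the Hahn--Banach extension is unital because it agrees with $\varphi$ at $I_H \in \mathcal V$ (hence is a state), and finite dimensionality is preserved since $\mathcal A \subseteq B(H)$ and $H_\varphi$ is a quotient of $\mathcal A$.
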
 We point out that quantum commuting correlations may also be described via states on operator system tensor products of particular $C^*$-algebras (see \cite[Corollary 3.2]{Lupiniperfect}).

\begin{theorem}\label{thm: quantum correlation characterization}
Fix $n,m \in \mathbb N$. Then a correlation $p \in C(n,m)$ is a quantum correlation if and only if there exists $k \in \mathbb N$ and a quantum $k$-AOU space $(\mathcal V, C,e)$ with generators $\{Q(ab|xy): a,b \in [m], x,y \in [n]\}$ and a state $\vp: \mathcal V \to \mathbb C$ such that $p(ab|xy) = \vp(Q(ab|xy))$ for each $a,b \in [m], x,y \in [n].$
\end{theorem}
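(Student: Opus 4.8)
The plan is to prove both implications by passing to the C*-envelope $\mathcal{A} := C^*_e(\mathcal{V}_\text{$k$-min})$ and exploiting that, by Corollary~\ref{cor: projection in C*-envelope}, a positive contraction is an abstract projection in the $k$-AOU space $\mathcal{V}$ precisely when it is a genuine projection in $\mathcal{A}$. The bridge between correlations and states on C*-algebras will be Proposition~\ref{prop: equivalent to correlation}, which reduces membership in $C_q(n,m)$ to the existence of commuting projection-valued measures in a \emph{finite-dimensional} C*-algebra together with a state.

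For the forward direction, suppose $p \in C_q(n,m)$. I would invoke Proposition~\ref{prop: equivalent to correlation} to obtain a finite-dimensional C*-algebra, commuting PVMs $\{E_{xa}\}$, $\{F_{yb}\}$, and a state $\vp$ with $p(ab|xy) = \vp(E_{xa}F_{yb})$. Representing this C*-algebra faithfully and unitally on a Hilbert space of some finite dimension $k$, I may view it inside $M_k$. Setting $Q(ab|xy) := E_{xa}F_{yb}$ (a product of commuting projections, hence a projection) and $\mathcal{V} := \Span\{Q(ab|xy)\}$, the PVM relations give $\sum_{ab} Q(ab|xy) = 1$ and well-defined marginals $E(a|x) = E_{xa}$, $F(b|y) = F_{yb}$. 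Since $\mathcal{V} \subseteq M_k$, it is $k$-minimal by Proposition~\ref{prop: k-minimal system if and only if subsystem of ell infinity}, and the inclusion $\mathcal{V} \hookrightarrow M_k = B(\C^k)$ is a complete order embedding carrying each $Q(ab|xy)$ to a projection; Theorem~\ref{thm: abstract projections in operator systems} then makes each $Q(ab|xy)$ an abstract projection in the operator system $\mathcal{V}$, and Theorem~\ref{thm: abstract projections in k-AOU} promotes this to an abstract projection in the $k$-AOU space. Thus $(\mathcal{V},C,e)$ is a quantum $k$-AOU space and the restriction $\vp|_{\mathcal{V}}$ is the desired state.

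For the converse, suppose a quantum $k$-AOU space $(\mathcal{V}, C, e)$ and a state $\vp$ are given with $p(ab|xy) = \vp(Q(ab|xy))$. By Corollary~\ref{cor: projection in C*-envelope} every generator $Q(ab|xy)$ is a projection in $\mathcal{A}$, which is a $k$-minimal C*-algebra by Corollary~\ref{cor: C^*_e(V) k-minimal when V is k-minimal}. The relation $\sum_{ab} Q(ab|xy) = e = 1_{\mathcal{A}}$ expresses $1$ as a sum of projections, which forces the family $\{Q(ab|xy)\}_{a,b}$ to be mutually orthogonal, hence a PVM for each fixed $(x,y)$; consequently the marginals $E(a|x)$ and $F(b|y)$ are projections forming PVMs, and $E(a|x)F(b|y) = Q(ab|xy) = F(b|y)E(a|x)$. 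Using well-definedness of the marginals, for any $x,y$ both $E(a|x)$ and $F(b|y)$ can be written within the commutative algebra generated by the PVM associated with the pair $(x,y)$, so \emph{all} pairs commute. Extending $\vp$ to a state $\tilde{\vp}$ on $\mathcal{A}$ yields $p(ab|xy) = \tilde{\vp}(E(a|x)F(b|y))$ with genuine commuting PVMs, which already places $p$ in $C_{qc}(n,m)$.

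The hard part is upgrading this to $C_q(n,m)$, i.e.\ producing a finite-dimensional model, since $\mathcal{A}$ may well be infinite-dimensional and a naive pure-state decomposition of $\tilde{\vp}$ would only land $p$ in the closure $C_{qa}(n,m)$. Here I would exploit the finiteness of the ambient space $\R^{m^2n^2}$. Consider the affine, weak-$*$-continuous map $f$ sending a state $\psi$ on $\mathcal{A}$ to the tuple $(\psi(Q(ab|xy)))$; its image $K = f(S(\mathcal{A}))$ is a compact convex subset of $\R^{m^2n^2}$ containing $p$. Each extreme point of $K$ is the image under $f$ of a pure state of $\mathcal{A}$ (the fiber over an extreme point of $K$ is a weak-$*$-compact face of the state space, which has an extreme point by Krein--Milman), and by Lemma~\ref{lem: pure states k-minimal} every pure state of the $k$-minimal algebra $\mathcal{A}$ has GNS dimension at most $k$, so its image lies in $C_q(n,m)$ via the finite-dimensional GNS representation. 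Finally, by Carath\'eodory's theorem in $\R^{m^2n^2}$, the point $p$ is a \emph{finite} convex combination of such extreme points, and a finite convex combination of quantum correlations is again a quantum correlation (take the direct sum of the finitely many finite-dimensional models). Hence $p \in C_q(n,m)$. The key insight making the converse work is precisely that finiteness of the correlation space lets Carath\'eodory replace the infinite mixtures that would only yield $C_{qa}(n,m)$ with finite ones that stay inside $C_q(n,m)$.
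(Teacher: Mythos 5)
Your proof is correct, and while the forward direction is essentially the paper's argument verbatim (realize $p$ via Proposition~\ref{prop: equivalent to correlation} on a Hilbert space of dimension $k$, take $Q(ab|xy)=E_{xa}F_{yb}$, and use $k$-minimality of subsystems of $M_k$ plus Theorems~\ref{thm: abstract projections in operator systems} and~\ref{thm: abstract projections in k-AOU}), your converse organizes the convexity step differently. The paper invokes Theorem~\ref{thm: k-minimal C*algebra is a subalgebra of ell infinity of matrix algebras} to embed $C^*_e(\mathcal{V}_\text{$k$-min})$ as a C*-subalgebra of $\bigoplus_{i\in I} M_{d_i}$ with $d_i\le k$, decomposes the state block-wise to write $p$ as a possibly \emph{infinite} convex combination $\sum_i \lambda_i p_i$ of quantum correlations, and then appeals to the Cook--Webster generalization of Carath\'eodory's theorem to replace this $\sigma$-convex combination by a finite one. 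You instead consider the affine weak-$*$-continuous evaluation map $f$ on the full state space of $\mathcal{A}=C^*_e(\mathcal{V}_\text{$k$-min})$, observe that the fiber over an extreme point of the compact convex image $K\subseteq\R^{m^2n^2}$ is a face and hence contains a pure state, apply Lemma~\ref{lem: pure states k-minimal} directly to conclude that every extreme point of $K$ is a quantum correlation realized on a GNS space of dimension at most $k$, and finish with the classical Minkowski/Carath\'eodory theorem for compact convex sets in finite dimensions. Your route buys two things: it bypasses the $\sigma$-convexity result of Cook and Webster in favor of entirely standard finite-dimensional convexity, and it uses only Lemma~\ref{lem: pure states k-minimal} rather than the full faithful $*$-representation of Theorem~\ref{thm: k-minimal C*algebra is a subalgebra of ell infinity of matrix algebras}; the paper's block decomposition is more explicit and makes the convex weights $\lambda_i$ concrete. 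All the auxiliary steps you use (orthogonality of projections summing to the unit, commutation of the marginal PVMs via the common commutative algebra generated by $\{Q(a'b'|xy)\}$ for each fixed $(x,y)$, extension of the state from $\mathcal{V}$ to $\mathcal{A}$, and closure of $C_q(n,m)$ under finite convex combinations via direct sums) are sound.
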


\begin{proof}
Let $p \in C_q(n,m)$. Then by Proposition~\ref{prop: equivalent to correlation} there exists a finite-dimensional Hilbert space $H$, an operator system $\widetilde{\mathcal V} \subseteq B(H)$, and projection-valued measures $\{E_{xa}\}_{a=1}^m, \{F_{yb}\}_{b=1}^m \subseteq B(H)$, for each $x,y \in [n]$, such that $E_{xa}F_{yb} = F_{yb}E_{xa}$ and $E_{xa}F_{yb} \in \widetilde{\mathcal V}$ for each $x,y \in [n], a,b \in [m],$ and a state $\vp: \widetilde{\mathcal V} \to \mathbb C$ such that $p(ab|xy) = \vp(E_{xa}F_{yb})$. Let $k := \dim H$ and let $\mathcal V$ be the operator system spanned by the elements of the set $\{Q(ab|xy): a,b \in [m], x,y \in [n]\},$ where $Q(ab|xy):= E_{xa}F_{yb}$. Since for each $x,y \in [n]$ the projection-valued measures $\{E_{xa}\}_{a=1}^m$, $\{F_{yb}\}_{b=1}^m$ pairwise commute then $Q(ab|xy)$ is a projection for each $x,y,a,b.$ In particular, the set of generators $\{Q(ab|xy): a,b \in [m], x,y \in [n]\}$ consists of $k$-order abstract projections and it is straightforward to verify that the generators satisfy the conditions of Definition~\ref{defn: quantum k-operator system}. Since $\mathcal V$ is $k$-minimal, it follows from Theorem \ref{thm: abstract projections in k-AOU} that $\mathcal V$ is a quantum $k$-AOU space such that $\vp(Q(ab|xy)) = p(ab|xy)$ for each $a,b \in [m], x,y \in [n].$

Conversely, suppose $(\mathcal V, C,e)$ is a quantum $k$-AOU space with the set of generators $\{Q(ab|xy): a,b \in [m], x,y \in [n] \}.$ Each generator $Q(ab|xy) \in \mathcal V$ is an abstract projection in $\mathcal{V}$ and therefore a projection in $C^*_e(\mathcal{V}_\text{$k$-min})$ by Corollary~\ref{cor: projection in C*-envelope}. Theorem~\ref{thm: k-minimal C*algebra is a subalgebra of ell infinity of matrix algebras} implies $C_e^*(\mathcal V_\text{$k$-min})$ is a C*-subalgebra of $\bigoplus_{i \in I, d_i \leq k} M_{d_i},$ for some set $I$. Thus, each projection $Q(ab|xy)$ is a projection in $\bigoplus_{i \in I, d_i \leq k} M_{d_i}.$ 
 
We write $Q(ab|xy) = (Q_i(ab|xy))_{i \in I} \in \bigoplus_{i \in I, d_i \leq k} M_{d_i}$, where for each $i \in I, Q_i(ab|xy) \in M_{d_i}$ is the compression to the $i$\textsuperscript{th} block. Since $Q(ab|xy)$ is a projection then it follows each $Q_i(ab|xy)$ is a projection in $M_{d_i}$. Fix $i \in I$ and let $E_i(a|x):= \sum_b Q_i(ab|xy), F_i(b|y):= \sum_a Q_i(ab|xy)$ be the respective marginal operators. Since $Q(ab|xy)Q(a^\prime b^\prime|xy) = 0$ if either $a \neq a^\prime$ or $b \neq b^\prime$ then the same property holds for $Q_i(ab|xy)$. In particular, $\{E_i(a|x)\}_{a=1}^m$ and $\{F_i(b|y)\}_{b=1}^m$ are projection valued measures in $M_{d_i}$ and \[
E_i(a|x)F_i(b|y) = \sum_{a^\prime b^\prime}Q_i(ab^\prime|xy)Q_i(a^\prime b|xy) = Q_i(ab|xy).
\] Thus, given any state $\vp^i: M_{d_i} \to \mathbb C$, then if $p_i$ is the correlation defined as $p_i(ab|xy):= \vp(E_i(a|x)F_i(b|y))$ then $p_i \in C_q(n,m)$ by Proposition~\ref{prop: equivalent to correlation}. 

Let $\vp: \bigoplus_{i \in I, d_i \leq k} M_{d_i} \to \mathbb C$ be a state, and let $\vp^i:= \vp \vert_{M_{d_i}^\prime}: M_{d_i}^\prime \to \mathbb C$ be the restriction to the $i$\textsuperscript{th} block. Here we have let $M_{d_i}^\prime \subseteq \bigoplus_{i \in I, d_i \leq k} M_{d_i}$ be the closed subspace isomorphic to $M_{d_i}$. Thus, given $v \in M_{d_{i_o}}^\prime \subseteq \bigoplus_{i \in I, d_i \leq k} M_{d_i}$ then $v_i = 0$ for $i \neq i_o.$ Identifying each $Q_i(ab|xy)$ with the corresponding element $Q_i^\prime(ab|xy) \in M_{d_i}^\prime$ we have $Q(ab|xy) = \sum_i Q_i^\prime(ab|xy)$. It then follows \[
\vp(Q(ab|xy)) = \sum_i \lambda_i \vp^i(Q_i^\prime(ab|xy))
\] where $\sum_i \lambda_i = 1$. By our remarks above it follows if $p_i(ab|xy):= \vp^i(Q_i^\prime(ab|xy))$, then $p_i \in C_q(n,m)$. Thus, \[
\vp(Q(ab|xy)) = \sum_i \lambda_i p_i(ab|xy),
\] for each $x,y \in [n], a,b \in [m].$ We claim there exists a finite subset $I_o \subseteq I$ such that $\vp(Q(ab|xy)) = \sum_{i \in I_o} \lambda_i p_i(ab|xy)$. This follows from Carath\'{e}odory's theorem, since each $p_i \in C_q(n,m)$ and $C_q(n,m)$ is a convex subset of the finite dimensional vector space $\mathbb{R}^{m^2 n^2}$.   \qedhere
\end{proof}

\begin{remark}
In the above proof, we point out that our use of this generalized Carath\'{e}dory's theorem is due to Cook and Webster in \cite{CookCaratheodory}.
\end{remark}

By \cite{ji2020mip}, there exist positive integers $n,m$ such that $\overline{C_q(n,m)}$ is a proper subset of $C_{qc}(n,m)$. Consequently, the distance between the sets $C_{q}(n,m)$ and $C_{qc}(n,m)$ is at least some positive value $\epsilon > 0$, where distance is calculated in the $l_{\infty}$ norm on $\mathbb{R}^{m^2 n^2}$. (In fact, any norm on $\mathbb{R}^{m^2 n^2}$ would suffice.) Combining this observation with Theorem~\ref{thm: quantum correlation characterization} above and \cite[Theorem~6.3]{araiza2020abstract} yields the following statement, which may be interpreted as a new equivalent formulation of Tsirelson's conjecture in terms of operator systems and $k$-AOU spaces.

\begin{corollary} \label{cor: operator system Tsirleson}
Let $m,n \in \mathbb{N}$. Then $p \in C_{qc}(n,m) \setminus \overline{C_q(n,m)}$ if and only if there exists a quantum commuting operator system $\mathcal{V}$ with generators $E(ab|xy)$, a state $\varphi: \mathcal{V} \to \mathbb{C}$, and an $\epsilon > 0$ such that whenever $\mathcal{W}$ is a quantum $k$-AOU space with generators $F(ab|xy)$ and $\psi$ is a state on $\mathcal{W}$, then
\[ |\varphi(E(a'b'|x'y')) - \psi(F(a'b'|x'y'))| > \epsilon \]
for some $a',b' \in [m]$ and $x',y' \in [n]$.
\end{corollary}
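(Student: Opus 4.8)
The plan is to translate both sides of the equivalence into statements about the correlation sets themselves, using the two characterization theorems already in hand, and then invoke only the elementary topology of the closed set $\overline{C_q(n,m)}$ inside $\mathbb{R}^{m^2 n^2}$. Concretely, I would use \cite[Theorem~6.3]{araiza2020abstract} to pass between quantum commuting operator systems (with their states and generators) and elements of $C_{qc}(n,m)$, and Theorem~\ref{thm: quantum correlation characterization} to pass between quantum $k$-AOU spaces (with their states and generators) and elements of $C_q(n,m)$.

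For the forward direction, I would begin with $p \in C_{qc}(n,m) \setminus \overline{C_q(n,m)}$. By \cite[Theorem~6.3]{araiza2020abstract} there is a quantum commuting operator system $\mathcal{V}$ with generators $E(ab|xy)$ and a state $\varphi$ satisfying $p(ab|xy) = \varphi(E(ab|xy))$. Since $\overline{C_q(n,m)}$ is closed and $p$ lies outside it, the $\ell_\infty$-distance $d := \operatorname{dist}(p, \overline{C_q(n,m)})$ is strictly positive; I would set $\epsilon := d/2$. Given any quantum $k$-AOU space $\mathcal{W}$ with generators $F(ab|xy)$ and any state $\psi$, Theorem~\ref{thm: quantum correlation characterization} guarantees that $q(ab|xy) := \psi(F(ab|xy))$ defines an element of $C_q(n,m)$. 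Hence $\|p - q\|_\infty \geq d > \epsilon$, so some coordinate $(a',b',x',y')$ satisfies $|p(a'b'|x'y') - q(a'b'|x'y')| > \epsilon$; unwinding the definitions of $p$ and $q$ gives exactly the asserted inequality.

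For the reverse direction, I would take $\mathcal{V}, \varphi, \epsilon$ as in the statement and set $p(ab|xy) := \varphi(E(ab|xy))$, which lies in $C_{qc}(n,m)$ by \cite[Theorem~6.3]{araiza2020abstract}. Assuming toward a contradiction that $p \in \overline{C_q(n,m)}$, I would choose a sequence $q_j \in C_q(n,m)$ with $q_j \to p$. By Theorem~\ref{thm: quantum correlation characterization} each $q_j$ is realized as $q_j(ab|xy) = \psi_j(F_j(ab|xy))$ for some quantum $k_j$-AOU space $\mathcal{W}_j$ and state $\psi_j$. The hypothesis, applied to each $\mathcal{W}_j$ and $\psi_j$, forces $\|p - q_j\|_\infty > \epsilon$ for every $j$, contradicting $q_j \to p$. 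Therefore $p \notin \overline{C_q(n,m)}$, which together with $p \in C_{qc}(n,m)$ completes the equivalence.

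I do not expect a serious obstacle here, since the statement is a bookkeeping consequence of the two characterization theorems together with the definition of the closure. The one point requiring genuine care is the alignment of quantifiers: the universal quantification over \emph{all} quantum $k$-AOU spaces $\mathcal{W}$ in the statement (and hence implicitly over all $k \in \mathbb{N}$) must be matched against the fact that, as $q$ ranges over all of $C_q(n,m)$, the realizing data $(\mathcal{W}, \psi)$ supplied by Theorem~\ref{thm: quantum correlation characterization} ranges precisely over quantum $k$-AOU spaces for varying $k$. One must also translate the norm statement $\|p - q\|_\infty > \epsilon$ into the coordinatewise assertion that some $(a',b',x',y')$ realizes a gap exceeding $\epsilon$, which is immediate from the $\ell_\infty$ norm being the maximum over coordinates.
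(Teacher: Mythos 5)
Your proposal is correct and is essentially the argument the paper intends: the paper offers no explicit proof, stating only that the corollary follows by combining Theorem~\ref{thm: quantum correlation characterization} with \cite[Theorem~6.3]{araiza2020abstract} and the positivity of the distance from a point outside the closed set $\overline{C_q(n,m)}$, which is exactly the translation-plus-elementary-topology argument you carry out. Your explicit attention to the quantifier alignment (all $k$ and all quantum $k$-AOU spaces versus all of $C_q(n,m)$) fills in the only detail the paper leaves implicit.
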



\bibliographystyle{alpha}
\bibliography{References}

\end{document}